\theoremstyle{plain}
\newtheorem{lemma}{Lemma}
\newtheorem{definition}{Definition}
\newtheorem{corollary}{Corollary}
\newtheorem{proposition}{Proposition}
\newtheorem{theorem}{Theorem}
\newtheorem{conjecture}{Conjecture}
\newtheorem{question}{Question}
\theoremstyle{remark}
\newtheorem{remark}{Remark}
\newtheorem{example}{Example}
\newcommand{\R}{\mathbb{R}}
\newcommand{\Z}{\mathbb{Z}}
\newcommand{\N}{\mathbb{N}}
\newcommand{\Aut}{\mathrm{Aut}}
\newcommand{\stN}{{\uparrow}}
\newcommand{\stE}{{\rightarrow}}
\newcommand{\stW}{{\leftarrow}}
\newcommand{\stS}{{\downarrow}}
\newcommand{\stateat}[2]{#1[#2]}
\newcommand{\pats}[3]{\mathcal{P}_{#2,#3}(#1)}
\newcommand{\npats}[3]{\mathcal{P}^*_{#2,#3}(#1)}
\newcommand{\pto}{\nrightarrow}
\newcommand{\Sym}{\mathrm{Sym}}
\newcommand{\GL}{\mathrm{GL}}
\newcommand{\Homeo}{\mathrm{Homeo}}
\newcommand{\RTM}{\mathrm{RTM}}
\newcommand{\qee}{\fullmoon}
\title{A Physically Universal Turing Machine}
\author{
  Ville Salo\thanks{First author supported by Academy of Finland grant 2608073211.}
  ~and~Ilkka T\"orm\"a\thanks{Second author supported by Academy of Finland grant 295095.}
  \\
  Department of Mathematics and Statistics \\
  University of Turku, Turku, Finland \\
  \texttt{vosalo@utu.fi, iatorm@utu.fi}
}
\date{\today}
\begin{document}
\maketitle

\begin{abstract}
We construct a two-dimensional Turing machine that is physically universal in both the moving tape and moving head model.
In particular, it is mixing of all finite orders in both models.
We also provide a variant that is physically universal in the moving tape model, but not in the moving head model.
\end{abstract}

\section{Introduction}

In the theory of dynamical systems, one typically models states of the system as points in a space, and the time evolution as a self-map on this space.
However, in many dynamical systems the state is more naturally seen as specifying the ``contents'' of many ``regions'' of an ``underlying space''. If a metric is needed on the space of all states, one postulates that two states resemble each other if the contents are similar in large regions of the underlying space.\footnote{If the state space happens to be a subspace of a function space $S^T$, it is natural to formalize this with the compact-open topology. However, the reader should keep a more informal mindset.}
A basic example are cellular automata, where the underlying space is discrete, and consists of a (possibly infinite) number of cells, each of which holds a symbol from a finite alphabet. One can also think of Newtonian dynamics as such a system: the underlying space is the three-dimensional Euclidean spaces, and the dynamics describes the interaction of objects occupying the space.

In the context of dynamical systems supporting a notion of underlying space, one can imagine reformulations of classical dynamical properties in terms of the space and its contents. Here, we restrict mainly to notions from topological dynamics.
For example, topological transitivity intuitively means that given any bounded region $D$ of the space and two sets of contents $A, B$ for it, there is a state resembling $A$ at $D$ which eventually evolves into another state that resembles $B$ at $D$.
Topological transitivity implies some form of control over the evolution of the system: if one has the power to speficy the contents of the space everywhere but in the domain $D$, then one can force the system to transform from $A$ to $B$ via its own dynamical rules.
Weak and strong mixing, total transitivity and specification are examples of stronger notions of controllability considered in topological dynamics, where it is required that the dynamics is able to perform arbitrary transformations (possibly many successive ones) in a finite region, with constraints on the time the transformations take.

The notion of physical universality was defined by Janzing in~\cite{Ja10} as a notion of perfect controllability, for (classical and quantum) cellular automata and for systems defined by a Hamiltonian operator.
In terms of dynamical systems with an underlying space in the sense of the previous paragraph, the intuition is that a system is physically universal if it is topologically transitive, and in the spatial definition of topological transitivity explained above, the contents of the space outside $D$ and the time required for the transformation do not depend on $A$ and $B$.
Rather, one can implement any (suitably regular) function on the set of all contents of $D$ by specifying the contents of its complement and an amount of time for which to allow the system to evolve.
One should imagine a ``machine'' that can analyze the contents of $D$ with arbitrary precision and then perform a predefined manipulation on the contents in finite time.

In the context of cellular automata, physical universality is given a formal definition in \cite{Ja10} (which we repeat in Section~\ref{sec:DefOfPUTM}). Here, the considered regions are finite and the ``machine'' is the contents of the cells outside the finite region. The existence of a physically universal cellular automaton was left open in~\cite{Ja10}, and a two-dimensional physically universal cellular automaton was later constructed in~\cite{Sc15a}.
Since then, constructions of a quantum version~\cite{Sc15b} and a one-dimensional version~\cite{SaTo17b} have also been published.
All of these have the additional property that the minimum time required to implement a particular function depends polynomially on its descriptional complexity (when implemented as a Boolean circuit).
Such automata were called efficiently physically universal in~\cite{Sc15a}.

It would be interesting to be able to study physical universality in other dynamical systems. Following the informal discussion above, one can imagine what physical universality should roughly mean for a system supporting a notion of underlying space. For example, in the context of Newtonian mechanics, one would say that Newtonian dynamics itself is physically universal if we can perform the following kinds of experiments: specify a cubical region of space and place such a collection of objects with specific momenta outside it, that when the cube is filled with stationary spherical objects of some fixed size, after exactly three minutes it will contain the same number of objects in approximately the same positions if their number was odd, and no objects at all if their number was even.

Formalizing this idea, however, is not easy, even in the context of topological dynamics. To us, of particular interest are the following three questions:
\begin{enumerate}
\item How to formalize the notion of underlying space of a topological dynamical system?
\item How to formalize physical universality for a class of topological dynamical systems supporting a notion of an underlying space?
\item Once this has been defined for a particular class of topological dynamical systems, can we find a PU system in the class?
\end{enumerate}
We do not solve the general problem, but we present a solution to the latter two questions for a new class of systems for which the notion of space is intuitively clear: Turing machines, which have been studied as topological dynamical systems by several authors~\cite{Mo90,Ku97,BlCaNi02,DeBl04,Je13}. There are two standard ways of seeing a Turing machine as a topological dynamical system, which were introduced in~\cite{Ku97}, namely the moving head and moving tape models. In this paper, we define a notion of physical universality for Turing machines in both models, and present a two-dimensional Turing machine that is efficiently physically universal in both models. Of course, whether our formalization of PU is ``correct'' is up to debate (and we point out some deficiencies of the definition ourselves), but we believe it is in the correct spirit.

Our PU Turing machine may also be of independent interest. It resembles the famous Langton's ant~\cite{La86} in two ways.
First, its internal state stores a cardinal direction in which it moves on the two-dimensional tape, and its local rule is (almost) invariant under rotations.
We see it as a generalized turmite.
Turmites have very simple local dynamics, yet all nontrivial turmites are capable of universal computation in a certain sense~\cite{MaGaMeMo18}, so they are natural candidates for physical universality.
At present, we do not know whether there are nontrivial turmites in the sense of \cite{MaGaMeMo18} that are not physically universal (up to parity).
Second, our machine has no periodic orbits in the moving head model, meaning that the Turing machine head eventually escapes every finite region.
This was proved for Langton's ant in~\cite{BuTr92}, and it is a necessary condition for physical universality in both models.
Langton's ant was also proved to be topologically transitive (and mixing up to a parity condition) in the moving tape model in~\cite{BuTr93}, which is implied for our machine by physical universality.
The dynamics of Langton's ant has been studied further in e.g.\ \cite{Ga03}.

Some other properties of interest that we prove are that our machine in fact escapes all finite rectangular regions in polynomial time in the size of the region, and is topologically mixing in the moving head model. To our knowledge it is the first proven example of the latter behavior in Turing machines. In addition to our main construction, we present a Turing machine which is physically universal in the moving tape model but not the moving head model. We also discuss the invariance properties of PU on Turing machines and possible strenghtenings of our definition, and include many questions.

We constructed the proof on the blackboard, except that the trick shown in Figure~\ref{fig:SculptStage} as found by trial and error in a computer simulation. With the exception of Example~\ref{ex:N3} (which is not crucial for the results), computer simulation is not necessary for following the proof. Nevertheless, for readers interested in exploring our rule $\bar M$, we have included a @RULE file used by the Golly cellular automaton simulator.


\section{Preliminaries}
\label{sec:Prelim}

Partial functions from a set $A$ to another set $B$ are denoted $f : A \pto B$.
The set of finite words over an alphabet $A$ is denoted $A^*$.
The empty word is denoted by $\lambda$.

Fix a dimension $d$.
The \emph{full shift} over a finite alphabet $\Sigma$ is the set $\Sigma^{\Z^d}$, whose elements are called \emph{configurations}.
For any subset $N \subset \Z^d$, a \emph{shape-$N$ pattern} is an element $P \in \Sigma^N$.
The pattern \emph{occurs} in $x \in \Sigma^{\Z^d}$ if for some $\vec v \in \Z^d$ we have $x_{\vec v + \vec w} = P_{\vec w}$ for all $\vec w \in N$.
A \emph{subshift} is a subset of $\Sigma^{\Z^d}$ defined by a set $F$ of finite patterns as the set of those configurations where no $P \in F$ occurs.

Denote by $\tau : \Z^d \times \Sigma^{\Z^d} \to \Sigma^{\Z^d}$ the translation action defined by $\tau_{\vec v}(x)_{\vec w} = x_{\vec w + \vec v}$.
For a finite set $Q$ and $\# \notin Q$, let $X_Q \subset (Q \cup \{\#\})^{\Z^d}$ be the subshift of those configurations that contain at most one occurrence of an element of $Q$.
For $q \in Q$ and $\vec v \in \Z^d$, denote by $\stateat{q}{\vec v}$ the unique configuration $x \in X_Q$ with $x_{\vec v} = q$.
For $N \subset \Z^d$, we denote by $\pats{N}{Q}{\Sigma}$ the shape-$N$ patterns occurring in $X_Q \times \Sigma^{\Z^d}$, and by $\npats{N}{Q}{\Sigma}$ those where an element of $Q$ occurs.
For $n \in \N$, we denote $\pats{n}{Q}{\Sigma} = \pats{[0, n-1]^d}{Q}{\Sigma}$, and similarly for $\npats{n}{Q}{\Sigma}$.

A $d$-dimensional Turing machine is a $4$-tuple $M = (Q, \Sigma, N, \delta)$, where $Q$ is a finite state set, $\Sigma$ is a finite alphabet, $N \subset \Z^d$ is a finite neighborhood and $\delta : (Q \times \Sigma^N) \to (Q \times \Sigma^N \times N)$ is a transition function. A number $r \in \N$ is a \emph{radius} of $M$ if $N \subset [-r,r]^d$, and \emph{the radius} refers to its minimal radius.
The machine is associated with two topological dynamical systems.
In the moving head model, the state space is $X_Q \times \Sigma^{\Z^d}$, and the dynamics is given by $M(\#^{\Z^d}, x) = (\#^{\Z^d}, x)$ and $M(\stateat{q}{\vec v}, x) = (\stateat{p}{\vec v + \vec w}, x)$, where $\delta(q, \tau_{\vec v}(x)|_N) = (p, P, \vec w)$ and $y$ is obtained from $x$ by replacing the contents of $N + \vec v$ with $P$.
In the moving tape model, the state space is $Q \times \Sigma^{\Z^d}$, and the dynamics is given by  $M(q, x) = (p, \tau_{\vec w}(z))$ with $\vec w$ and $p$ as above and $z$ being obtained from $x$ by replacing the contents of $N$ by $P$.

An important subset of $\Z^2$ is the following: For $m, n > 0$, the \emph{outer $(m,n)$-border} is the set $B_{m,n} = \{-1, m\} \times [0, n-1] \cup [0, m-1] \times \{-1, n\} \subset \Z^2$.
Seeing $\Z^2$ as an undirected graph with edges $\{\{(m,n), (m+a,n+b)\} \;|\; |a| + |b| = 1\}$, the outer $(m,n)$-border consists of the coordinates
that do not belong to the set $[0, m-1] \times [0, n-1]$ but have a neighbor that does. We denote $B_n = B_{n,n}$.

\section{Physical Universality in Turing Machines}
\label{sec:DefOfPUTM}

The \emph{physical universality} (or \emph{PU}) of a $d$-dimensional cellular automaton $f$ on state set $\Sigma$ is defined as follows:
For all finite domains $D \subset \Z^d$ and all functions $g : \Sigma^D \to \Sigma^D$, there exists a partial configuration $y \in \Sigma^{\Z^d \setminus D}$ and a time $t \in \N$ such that for all patterns $P \in \Sigma^D$ we have $f^t(P \sqcup y)|_D = g(P)$.
We can assume that the domain $D$ is square-shaped.
Since a $d$-dimensional Turing machine $M = (Q, \Sigma, N, \delta)$ in the moving head model can be seen as a restriction of a certain cellular automaton $f$ on $((Q \cup \{\#\}) \times \Sigma)^{\Z^d}$ to the set $X_Q \times \Sigma^{\Z^d}$, it would make sense to define physical universality of $M$ by the analogous condition.

However, there are two complications that make the definition vacuous.
First, some patterns in $\pats{D}{Q}{\Sigma}$ contain the head of $M$, so any partial configuration $y \in ((Q \cup \{\#\}) \times \Sigma)^{\Z^d \setminus D}$ that satisfies $y \sqcup P \in X_Q \times \Sigma^{\Z^d}$ for all $P \in \pats{D}{Q}{\Sigma}$ cannot contain the head of $M$.
But if $P \in \pats{D}{Q}{\Sigma}$ does not contain the head either, $y \sqcup P$ is then a fixed point of $M$.
Hence we restrict the universal quantification to the set $\npats{D}{Q}{\Sigma}$ of patterns that contain the head, and require $y \in \Sigma^{\Z^d \setminus D}$ to not contain the head of $M$.

The second complication is that apart from trivial cases, there always exist distinct patterns $P, P' \in \npats{D}{Q}{\Sigma}$ that satisfy $P \sqcup y = M(P' \sqcup y)$ for all $y \in \Sigma^{\Z^d \setminus D}$.
This implies $M^t(P \sqcup y) = M^{t+1}(P' \sqcup y)$ for all $t \in \N$, which makes it impossible to implement arbitrary functions $g : \npats{D}{Q}{\Sigma} \to \npats{D}{Q}{\Sigma}$.
To get around this problem, we restrict our attention to subsets of $\npats{D}{Q}{\Sigma}$ that are in disjoint $M$-orbits when completed into full configurations by filling $\Z^d \setminus D$ by a fixed symbol $0 \in \Sigma$.

We say a tuple of patterns $(P_0,...,P_{k-1}) \in (\npats{m}{Q}{\Sigma})^k$ has \emph{disjoint $0$-orbits} if the $M$-orbits of $P_i \sqcup 0^{\Z^d \setminus [0, m-1]^d}$ (in the moving head model) are pairwise disjoint for distinct $i$. Similarly, we say a tuple $((q_0, P_0), \ldots, (q_{k-1}, P_{k-1})) \in Q \times \Sigma^{[0,m-1]^d}$ has \emph{disjoint $0$-orbits} if the configurations $(q_j, P_j \sqcup 0^{\Z^d \setminus [0, m-1]^d})$ have pairwise disjoint $M$-orbits (in the moving tape model).


\begin{definition}
Let $M = (Q, \Sigma, N, \delta)$ be a $d$-dimensional Turing machine with $0 \in \Sigma$. Let $P = (P_0, \ldots, P_{k-1}) \in (\npats{m}{Q}{\Sigma})^k$ and $(R_0, \ldots, R_{k-1}) \in (\pats{m}{Q}{\Sigma})^k$ be tuples of patterns. We say \emph{$P$ is physically transformable to $R$ in the moving head model (in time $t \in \N$)}, and there exists a partial configuration $x \in \pats{\Z^d \setminus [0, m-1]^d}{Q}{\Sigma}$ such that $M^t(P_j \sqcup x)|_{[0, m-1]^d} = R_j$ for all $j \in [0, k-1]$. Similarly, if $P = ((q_0, P_0), \ldots, (q_{k-1}, P_{k-1})), R = ((p_0, R_0), \ldots, (p_{k-1}, R_{k-1})) \in (Q \times \Sigma^{[0,m-1]^d})^k$, we say $P$ is \emph{$P$ is physically transformable to $R$ in the moving tape model (in time $t \in \N$)} if there exists a partial configuration $x \in \Sigma^{\Z^d \setminus [0, m-1]^d}$ such that $M^t(q_j, P_j \sqcup x) = (p_j, x_j)$ with $x_j|_{[0, m-1]^d} = R_j$ for all $j \in [0, k-1]$.

We say $M$ is \emph{physically universal in the moving head model} if for all $k, m \in \N$, for any $P, R \in (\npats{m}{Q}{\Sigma})^k$ such that the patterns $P_i$ have disjoint $0$-orbits, $P$ is physically transformable to $R$. The machine $M$ is \emph{efficiently physically universal in the moving head model}, if the time $t$ can be bounded by a polynomial in the circuit complexities of the set of patterns $\{P_0, \ldots, P_{k-1}\}$ and the function $P_i \mapsto R_i$. \emph{(Efficient) physical universality in the moving tape model} is defined exactly analogously.
\end{definition}

The concept of ``physically transformable'' allows many different definitions of physical universality, and it is hard to say which one is the best. Observe that if $k = 1$, $P$ is physically transformable to $R$ for all patterns (more precisely $1$-tuples of patterns) $P, R$ if and only if $M$ is topologically transitive (in either model). The precise definitions of PU above are optimized for machine $M$, see Section~\ref{sec:AdditionalShit} for some variants.



Our main result is that efficiently physically universal Turing machines exist.

\begin{theorem}
\label{thm:Main}
There exists a two-dimensional Turing machine which is efficiently physically universal in both the moving tape and the moving head model.
\end{theorem}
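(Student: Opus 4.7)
The plan is to construct a two-dimensional Turing machine $M$ whose head stores a cardinal direction and whose local rule is almost rotation-invariant, i.e., a generalized turmite in the spirit of Langton's ant. The construction realizes a single local rule that, depending on the surrounding pattern, enacts three logical phases: a \emph{readout} phase in which the head sweeps the finite active square $[0,m-1]^2$ while encoding its contents into a stream of symbols carried out with the head; a \emph{computation} phase in which the head traverses a pre-placed ``program'' lying in the complement of $[0,m-1]^2$, simulating an arbitrary Boolean circuit on the encoded data; and a \emph{writeback} phase in which the head returns to $[0,m-1]^2$ and overwrites its contents with the desired target $R_j$.

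The first step is a polynomial escape lemma: starting with the head anywhere inside $[0,m-1]^2$ and arbitrary tape contents, the head exits the square after at most $\mathrm{poly}(m)$ steps. This is achieved by designing the local rule so that the head performs an expanding spiral or systematic sweep whose direction cannot be indefinitely deflected by local data; the sweep also reads every cell of $[0,m-1]^2$ and emits its content onto a unique outgoing track parallel to the exit edge. Because the escape property is already necessary for physical universality in the moving head model (the head must vacate any region that is to be overwritten), the rule must be tuned so the escape is simultaneously guaranteed, fast, and faithfully informative.

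The computation phase is implemented by laying out, in the complement of $[0,m-1]^2$, a fixed ``circuit compiler'' pattern. Standard constructions of wires, crossings, and two-input gates, familiar from universal cellular automata, can be adapted to a single moving head: the head follows the wires in its rotation-covariant manner, and its transitions at specific junctions implement gate evaluations on data stored in its finite internal state (or in a short local track it repeatedly rereads). Since the readout of $P_j$ is by construction a faithful encoding of $P_j$, any function $P_j \mapsto R_j$ is computable in this model, and the polynomial time bound follows if the readout, circuit traversal, and writeback each use $\mathrm{poly}(m)$ steps plus the circuit size.

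The main obstacle, I expect, lies in the writeback phase, because the $k$ different input patterns must be processed by the \emph{same} external configuration $x$ and yet produce the $k$ different outputs $R_j$ at the same positions and the same time $t$. The computation phase must therefore branch on its input and emit both the correct output pattern and a routing that returns the head into $[0,m-1]^2$ at precisely the right cells, in the moving head model, and at the right relative displacement in the moving tape model. A natural device is to sculpt the output cell by cell as the head re-enters the square, with the circuit dictating a conditional sequence of writes; synchronization across the $k$ orbits is enforced by padding each branch with predictable delay loops so that every branch consumes exactly $t$ total steps. The disjoint-$0$-orbits hypothesis is essential here, since it is precisely what guarantees that the readout can distinguish the $P_j$ and hence let the circuit select the correct $R_j$.
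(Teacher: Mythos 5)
There is a genuine gap, and it sits exactly at the step you label ``readout''. Your plan asks for a \emph{fixed} local rule whose head, placed anywhere inside $[0,m-1]^2$ with arbitrary contents, performs a systematic sweep of that square, ``emits its content onto a unique outgoing track'', and then exits. A Turing machine head has $O(1)$ internal states and a fixed finite neighborhood, while $m$ is arbitrary: the head cannot know where the square $[0,m-1]^2$ ends, cannot execute a content-independent sweep of it, and cannot carry or stream out the $m^2$ bits of data ``with the head''. Any attempt to mark the boundary or lay down an encoding track would itself have to be part of the surrounding configuration, at which point the rule is no longer doing what you claim. Likewise, the escape property is not something one can simply ``tune'' into the rule while keeping it reversible and provable; in the paper it is a nontrivial theorem about the dynamics of a concrete $5$-state turmite, proved by a potential-function argument (Lemma~\ref{lem:FastEscape}), together with the fact that after leaving a rectangle the head travels in a straight line forever (Lemma~\ref{lem:NoEscape}). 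Your proposal never exhibits a machine, and the machine it implicitly requires cannot exist.

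The paper's route puts all the intelligence into the external configuration $x$, not into the rule or the head. The head escapes the square on its own in $O(m^4)$ steps and is then intercepted by ``catcher'' gadgets; the contents of the square are extracted not in one pass but over polynomially many externally orchestrated re-entries (the probe stage), where each exit coordinate reveals the position of one $1$ and is recorded in disposable-switch gadgets of a token-based ``normed network'' that the head itself simulates by walking along wires of $0$s and $1$s (Lemmas~\ref{lem:CircuitAutomaton}, \ref{lem:TMSimulatesNetwork}, \ref{lem:ConcreteRealization}, \ref{lem:ProcessCanDo}). Because the machine is symbol-conserving, the output cannot be ``overwritten'' either: it is sculpted out of a pre-placed block of $1$s (the clay), again by repeated re-entries. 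Finally, synchronization is not just ``padding with delay loops'': the time the head spends inside the square before its first escape depends on $P_j$, and the paper recovers both $P_j$ and this elapsed time from the escaped configuration by a circuit exploiting reversibility and the disjoint-$0$-orbit hypothesis (Lemma~\ref{lem:CircuitSimulation}), then compensates with computed delays through consistent-timing border processes. So while your high-level phases (read, compute, write, synchronize) mirror the paper's, the mechanisms you propose for reading and writing are not realizable by a fixed finite-state head, and without them the argument has no workable core.
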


This machine $M$ acts on the binary tape. It has $5$ internal states and radius $1$.
The machine does not have particularly good symmetry properties, as we have to break its symmetry to avoid a parity issue. However, $M$ is a trivial modification\footnote{In dynamical terms, the machines are flow equivalent.} of a very natural Turing machine which we call $\bar M$. The machine $\bar M$ has $4$ states which carry only its orientation. It is time-symmetric\footnote{This means it is conjugate to its inverse by an involution. In the moving head model, the involution lives in the automorphism group of $X_Q \times \Sigma^{\Z^d}$. See Lemma~\ref{lem:Inverse} for the precise statement.}, symbol-conserving (Definition~\ref{def:SymbolConserving}) and has rotational symmetry (by 90-degree rotations). The machine also has a natural escape property crucial in our proofs, see Lemma~\ref{lem:FastEscape}.

In Section~\ref{sec:Further} we sketch some additional constructions obtained by modifying the Turing machine. The following is proved by adding an invariant factor in the the moving head model.

\begin{theorem}
There exists a Turing machine that is physically universal in the moving tape model, but not the moving head model.
\end{theorem}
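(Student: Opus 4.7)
My plan is to enlarge the alphabet of the machine $M$ of Theorem~\ref{thm:Main} to $\Sigma \times \Sigma'$ with $\Sigma' = \{0, 1\}$, keeping the state set, neighborhood, and movement function of $M$, and defining the transition $\delta'$ to act on the first coordinate exactly as $\delta$ and to never modify the second coordinate of any cell. The resulting machine $M'$ is still a two-dimensional Turing machine of the same radius as $M$.

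In the moving head model, the projection to the second coordinate of the tape gives an equivariant factor map from $M'$ onto the identity dynamical system on $(\Sigma')^{\Z^2}$, because $M'$ never writes to the auxiliary layer. The identity dynamics cannot realize any non-identity window transformation, so any target that prescribes a nonconstant permutation of the auxiliary component on $[0,m-1]^2$ is unreachable, and $M'$ fails to be physically universal in the moving head model.

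In the moving tape model, the dynamics translates the entire tape, including the auxiliary layer, by the head's movement, so the auxiliary layer is no longer pointwise invariant. To prove PU in this model I would first invoke the PU of $M$ in the moving tape model to handle the $\Sigma$-component, applied on a sufficiently enlarged working window; within this window I would program $M$'s output to force the cumulative head displacements $\vec{c}_j$ for distinct inputs to be pairwise $L^\infty$-separated by at least $m$ and each of $L^\infty$-magnitude at least $m$. The escape property of $M$ (Lemma~\ref{lem:FastEscape}) should make such a programming possible. This places the shifted windows $\vec{c}_j + [0, m-1]^2$, where the auxiliary output is read off, pairwise disjoint and disjoint from $[0, m-1]^2$, so that the auxiliary padding $y^{\Sigma'}$ can be assigned freely on each of these regions to match the prescribed $R_j^{\Sigma'}$.

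The hard part is that inputs with identical $\Sigma$-projection but differing auxiliary patterns induce identical head trajectories, so their shifted auxiliary windows coincide and force the auxiliary targets to agree, contradicting the arbitrariness required by PU. Resolving this requires refining $M'$ so that it additionally reads (but still never writes) the auxiliary tape and mixes the value it reads into its effective $\Sigma$-transition, breaking the degeneracy between such inputs in the moving tape model while preserving the pointwise invariance of the auxiliary layer in the moving head model. Carrying out this refinement together with the simultaneous control of the displacements $\vec{c}_j$, using an appropriately bookkept version of PU of $M$, is the main obstacle and the bulk of the proof.
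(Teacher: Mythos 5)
Your proposal has a genuine gap, and it is exactly the one you name at the end: the machine you actually define (second layer never written \emph{and} never read) is not physically universal in the moving tape model. As you observe, two input patterns whose $\Sigma$-components agree but whose auxiliary components differ have disjoint $0$-orbits (so PU must handle them), yet they produce identical head trajectories and hence identical total displacements; since the auxiliary layer is only ever translated, their auxiliary windows after time $t$ coincide, so no choice of padding can realize two different auxiliary targets. Declaring that one should ``refine $M'$ so that it reads but does not write the auxiliary tape'' is not a fix but a restatement of the problem: that refinement is precisely the content of the theorem, and it is where all the work lies. One must (i) design a concrete local rule in which the read-only layer influences the dynamics without destroying the escape and no-escape properties (Lemmas~\ref{lem:FastEscape} and~\ref{lem:NoEscape}) that the whole gadget machinery rests on, and (ii) re-prove moving-tape PU for the modified machine — you can no longer just invoke the PU of $M$ as a black box, because the dynamics on the $\Sigma$-layer is no longer that of $M$ once the auxiliary layer is consulted.

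For comparison, the paper's construction resolves this by taking alphabet $\{0,1\}^2$ and states $Q \times \{1,2\}$, where the second (immutable) layer can only toggle an activation bit in the state, and only when the head sits on a very specific local pattern (both layers $1$ under the head and first-layer $1$s in all eight neighbors); in state $(q,1)$ the head just glides forward, and in state $(q,2)$ it behaves exactly like $M$ on the first layer. This makes the auxiliary layer invisible during ordinary operation (so the catcher/probe machinery for $M$ still applies), while allowing the second layer to be read on demand by transporting a $3\times 3$ block of first-layer $1$s onto each cell and sending the head through it. The proof then probes both layers, and routes the head into one of $|A|$ prepared patterns producing the desired output in uniform time (note this costs efficiency, which is why the paper's statement for this machine is plain PU, not efficient PU). Your plan to control displacements $\vec{c}_j$ so that the shifted windows are disjoint is also not something the PU of $M$ hands you directly — the moving-tape PU definition constrains tape contents near the head, not the total displacement — though that part could likely be arranged with extra bookkeeping. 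The essential missing idea, however, is the explicit read-only-but-dynamics-affecting rule and the accompanying moving-tape PU argument, so the proposal as it stands does not prove the theorem.
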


Our Turing machines are quite general, and readers may wonder if the ability to see several cells at once is essential. It is not: we show that any Turing machine in our sense can be simulated by a \emph{classical Turing machine}, namely a Turing machine that performs a permutation of the tape, and then moves in a cardinal direction or stays put as a function of the current state. In particular, we obtain the following.

\begin{theorem}
There exists a classical Turing machine that is efficiently physically universal in both the moving tape and the moving head model.
\end{theorem}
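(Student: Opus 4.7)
The plan is to apply the simulation of arbitrary Turing machines by classical Turing machines sketched in the paragraph preceding the theorem statement to the physically universal machine $M$ of Theorem~\ref{thm:Main}, and then argue that (efficient) physical universality transfers across the simulation. This yields a classical Turing machine $M'$ that is efficiently physically universal in both models.

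First I would make the simulation precise. Since $M$ has binary alphabet and radius $1$, each $M$-step reads a $3\times 3$ neighbourhood, rewrites it, and then moves by at most one cell. A classical $M'$ can perform this as a bounded-length sequence of sub-steps: walk across the nine cells reading each symbol into its state, apply the local rule of $M$ internally, walk back writing the updated contents, and finally move to the new head position. The alphabet of $M'$ stays equal to $\{0,1\}$; its state set is inflated by a constant factor, encoding the simulated $M$-state, the current sub-step \emph{phase}, and the symbols read so far. I would call a configuration of $M'$ \emph{clean} when its phase is $0$ (so that it coincides with a configuration of $M$), and let $c$ be the bounded number of $M'$-steps per $M$-step.

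For the PU transfer, consider an instance $(P'_0, \ldots, P'_{k-1}) \mapsto (R'_0, \ldots, R'_{k-1})$ for $M'$ on $[0, m-1]^2$. I would enlarge the region to $D^+ = [-c, m-1+c]^2$, which contains the $M'$-head throughout the at most $c$ sub-steps surrounding any pattern. For each $i$ I extract a clean $M$-pattern $P_i \in \npats{D^+}{Q}{\Sigma}$ by continuing the $M'$-simulation from $P'_i$ until the phase resets, and produce a target $R_i$ whose $M'$-simulation eventually exhibits $R'_i$ on $[0, m-1]^2$ at the appropriate phase. The efficient PU of $M$ then supplies a surrounding $x$ and a time $t_M$ with $M^{t_M}(P_i \sqcup x)|_{D^+} = R_i$ for all $i$, in time polynomial in the circuit complexities of $\{P_i\}$ and $P_i \mapsto R_i$. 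Since these complexities differ from those of the $M'$-data by at most constant factors, the polynomial bound is preserved, yielding $t_{M'} = c t_M + O(c)$ and the same $x$ reinterpreted over the $M'$-alphabet.

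The main obstacle will be phase synchronisation: the inputs $P'_i$ may start at distinct phases $a_i$ and the targets $R'_i$ may sit at distinct phases $b_i$, and a single $M'$-time cannot a priori realize every offset $b_i - a_i \pmod c$ simultaneously. I would handle this by designing $M'$ so that its phase is not a pure function of elapsed time but can be advanced or paused based on tape symbols the head meets during its sub-step walk; this way the full $M'$-state, phase included, is controllable through the choice of surrounding $y$. Each offset is then absorbed into the choice of $R_i$, and PU of $M$ realizes the resulting function on $\npats{D^+}{Q}{\Sigma}$. Disjointness of $0$-orbits of the $P'_i$ descends to disjointness of $0$-orbits of the $P_i$ because the simulation is deterministic, so the PU hypothesis for $M$ applies. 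The argument in the moving tape model is identical, using the moving-tape version of PU for $M$ from Theorem~\ref{thm:Main}.
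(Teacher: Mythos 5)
Your overall plan --- take the physically universal machine $M$, simulate each of its steps by a bounded cycle of classical sub-steps, and then deal with the resulting timing obstruction --- is the same as the paper's, and your simulation itself is a legitimate alternative to the one used there: the paper keeps only two extra state bits plus a counter and must invoke Lemma~5 of \cite{BaKaSa16} (only \emph{even} permutations of $Q \times \{0,1\}^2 \times \{0,1\}^N$ are generable by single-cell operations, whence the two dummy bits), whereas you store the whole $3\times 3$ neighbourhood in the state, which sidesteps that lemma at the cost of a constant blow-up. However, the second half of your argument has two genuine gaps, and they are precisely the points the paper's sketch spends its effort on. First, the obstruction to physical universality of your $M'$ is not only the modulo-$c$ rigidity of the phase. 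Along any orbit started from a clean configuration, the scratch component of the state is forced back to $\vec{0}$ every time the phase returns to $0$ (and, more generally, phase, scratch and the just-visited cells satisfy cycle invariants), so a target pattern $R'_j$ whose head sits at phase $0$ with nonzero scratch is unreachable no matter what context is supplied; since PU quantifies over \emph{all} target patterns in $\pats{m}{Q'}{\Sigma}$, this already kills PU for the unmodified $M'$, and your proposed fix does not touch it. The paper flags exactly this issue (``the state contains two bits whose values are never modified'') and changes the machine --- the extra bits become a modulo-$4$ counter, and the cycle length is made state-dependent ($m$ sub-steps normally, $m+1$ when the simulated state is $\stE$, mirroring how $M$ repairs the parity defect of $\bar M$) --- precisely so that no such local restriction survives.

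Second, your phase-synchronisation fix is in tension with your own reduction. The black-box transfer rests on the rigid accounting $t_{M'} = c\, t_M + O(c)$ with one context $x$ shared by all $j$; but once the phase can be ``advanced or paused based on tape symbols the head meets,'' the number of sub-steps per simulated step becomes content-dependent, and since the head's trajectory and the symbols it sees differ across the inputs $P'_j$ (both inside the region and inside the gadgets of the context), equal $M$-times no longer yield equal $M'$-times. Nothing in your sketch explains how a single context orchestrates the input-dependent corrections $b_j - a_j$; in the paper's main proof such input-dependent timing is compensated inside the construction itself (the computation stage recovers the input and the elapsed time via Lemma~\ref{lem:CircuitSimulation} and outputs a matching delay), which is why the paper does \emph{not} transfer PU across the simulation as a black box but instead states that, for the modified classical machine, ``the proof of PU works the same as the main proof, up to timing details,'' i.e.\ the whole border-process construction is rerun with the variable cycle length absorbed into that bookkeeping. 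To close your argument you would either have to redo the main construction at the $M'$ level in the same way, or keep the cycle length rigid and find a different mechanism for the phase offsets --- as it stands, the reduction does not go through.
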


We make some dynamical remarks in Section~\ref{sec:AdditionalShit}. We show that physical universality is invariant under very general spatial conjugacies.

\section{Simulation of Turing machines by circuits}

We begin by dealing with the simple computational issues that arise from the fact our set of initial conditions $A$ is a (typically proper) subset of $\npats{m}{Q}{\Sigma}$. The main observation is that if $A$ is efficiently computable, then we can recover the original pattern and the time elapsed so far from the state after the head exits the region $[0,m-1]^d$, assuming the region is exited quickly.

\begin{definition}
  Let $D \subset \Z^d$, and let $M = (Q, \Sigma, N, \delta)$ be a $d$-dimensional reversible Turing machine.
  For a configuration $x = (q[\vec v], y) \in X_Q \times \Sigma^{\Z^d}$ and $D \subset \Z^2$ with $\vec v \in D$, we denote by $\tau^M_D(x)$ the smallest number $t \geq 0$ with $M^t(x) = (p[\vec w], z)$ where $\vec w \notin D$.
  It is called the \emph{$D$-escape time} of $x$.
  We also denote $\rho^M_D(x) = M^{\tau^M_D(x)}(x)$.
  
  Fix a special alphabet symbol $0 \in \Sigma$, and let $r \geq 0$ be a radius of $M$.
  For a pattern $P \in \npats{D}{Q}{\Sigma}$ containing the head, we denote $\tau^M(P) = \tau^M_D(P \sqcup 0^{\Z^d \setminus D})$ and $\rho^M_r(P) = M^{\tau^M(P)}(P \sqcup 0^{\Z^d \setminus D})|_{D + [-r, r]^d}$.
\end{definition}

We can simulate a Turing machine with a cellular automaton, and a cellular automaton with a circuit, which gives rise to the following result.

\begin{lemma}
\label{lem:CircuitSimulation}
  Let $r \geq 0$, and let $M = (Q, \Sigma, N, \delta)$ be a $d$-dimensional reversible Turing machine with $0 \in \Sigma$ and radius $r$.
  Let $A \subset \npats{m}{Q}{\Sigma}$ be a set of $m^d$-patterns containing the head and having disjoint $0$-orbits and such that $\rho^M$ is defined on each pattern of $A$.
  Then $\rho^M$ is injective on $A$, and the circuit complexity of the function $\rho^M(P) \mapsto (P, \tau^M_r(P))$ is $O(T (C + (m+2r)^d))$, where $C$ is the circuit complexity of $A$ and $T = \max \{ \tau(P) \;|\; P \in A \}$.
\end{lemma}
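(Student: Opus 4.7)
The plan is to prove both conclusions via backward simulation of $M$ starting from the escape configuration. For injectivity, the key observation is that, when $M$ is started from $P \sqcup 0^{\Z^d \setminus D}$ with $D = [0,m-1]^d$, the head remains inside $D$ at all times $t < \tau^M(P)$, so by the radius bound only cells in $D + [-r,r]^d$ can have been modified. Hence $\rho^M(P)$, together with the implicit surrounding $0$s, reconstructs the full escape configuration $M^{\tau^M(P)}(P \sqcup 0^{\Z^d \setminus D})$. If $\rho^M(P) = \rho^M(P')$ for $P, P' \in A$, these full configurations coincide, and by reversibility $P \sqcup 0^{\Z^d \setminus D}$ and $P' \sqcup 0^{\Z^d \setminus D}$ lie on a common $M$-orbit, which the disjoint $0$-orbit hypothesis forces to imply $P = P'$.

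For the circuit computing $\rho^M(P) \mapsto (P, \tau^M(P))$, I would construct a cascade that simulates $M^{-1}$ starting from the completion of the input by $0$s outside $D + [-r,r]^d$, and at each backward step checks whether the current configuration is of the form $P'' \sqcup 0^{\Z^d \setminus D}$ with $P'' \in A$. Because on valid inputs both the head and the region of modified cells stay inside $D + [-r,r]^d$ throughout the backward run (by the same radius argument as above), each inverse step can be realized by a local circuit of size $O((m+2r)^d)$. At each step I also evaluate the indicator circuit for $A$ and check that the head lies in $D$, costing $C + O(m^d)$. Feeding the $T$ step-tests into a first-match selector yields total size $O(T(C + (m+2r)^d))$, and the selector outputs the unique match as $(P, \tau^M(P))$.

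The main subtle point is why the first match during the backward sweep really occurs at step $\tau^M(P)$ rather than earlier. A premature match at backward step $t' < \tau^M(P)$ would provide some $P'' \in A$ with $P'' \sqcup 0^{\Z^d \setminus D}$ on the orbit of $P \sqcup 0^{\Z^d \setminus D}$; by disjoint $0$-orbits this forces $P'' = P$, so $M^{s}(P \sqcup 0^{\Z^d \setminus D}) = P \sqcup 0^{\Z^d \setminus D}$ for $s = \tau^M(P) - t' < \tau^M(P)$. But then $M^{\tau^M(P)}(P \sqcup 0^{\Z^d \setminus D}) = M^{\tau^M(P) \bmod s}(P \sqcup 0^{\Z^d \setminus D})$ would have its head inside $D$ (since $\tau^M(P) \bmod s < \tau^M(P)$), contradicting the definition of the escape time. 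Beyond this correctness check and a careful bound on the region size needed for the inverse simulation, the remaining work is routine accounting on circuit sizes, and I expect no further obstacle.
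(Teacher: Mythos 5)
Your proposal is correct and takes essentially the same route as the paper's proof: the same layered backward simulation of $M^{-1}$ on $[-r,m-1+r]^d$ with $0$ boundary, a per-layer membership test for $A$ (the paper's set $A'$) feeding a first-match selector, and the same disjoint-$0$-orbit/escape-time argument (which you make explicit via periodicity) showing the first match occurs exactly at backward step $\tau^M(P)$. The only loose point is your claim that the head and modified cells stay inside $D+[-r,r]^d$ \emph{throughout} the backward run, which may fail after step $\tau^M(P)$; but, as the paper notes, it suffices that the simulation is faithful for the first $\tau^M(P)$ steps, since any later, possibly incorrect, layers are irrelevant once the first match has been recorded.
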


\begin{proof}
  The injectivity of $\rho^M$ follows from the reversibility of $M$, and the fact that it cannot affect any cells in the region $\Z^d \setminus [-r, m-1+r]^d$ before leaving $[0, m-1]^d$.

  Let $f$ be a cellular automaton on $((Q \cup \{\#\}) \times \Sigma)^{\Z^d}$ whose restriction to $X_Q \times \Sigma^{\Z^d}$ implements the reverse machine $M^{-1}$ in the moving head model.
  Then $r$ is a radius for $f$.
  We construct a Boolean circuit arranged in $T$ `layers', numbered from $0$ to $T-1$.
  The $k$th layer contains $(m + 2r)^d$ copies of a circuit $S$ that computes the local rule $F : ((Q \cup \{\#\}) \times \Sigma)^{[-r,r]^d} \to (Q \cup \{\#\}) \times \Sigma$ of $f$, which can be visualized as cells in a $d$-dimensional grid.
  On layer $0$ the $(2r+1)^d$ input symbols of each copy of $S$ are taken from the input values, and on layer $k > 0$ from the outputs of the copies of $S$ of layer $k-1$.
  Those copies of $S$ that would take their inputs from outside the grid receive $0$-symbols instead.
  
  In addition to the next layer, the outputs of layer $k$ are passed to a copy of a size-$O(C + m^d)$ circuit that computes membership in $A' = \{ P \sqcup 0^{[-r, m-1+r]^d \setminus [0, m-1]^d} \;|\; P \in A \}$.
  With $O(T + m^d)$ additional gates, we can decide whether $k$ is the first layer whose pattern is in $A'$, and if so, copy the pattern into the output wires of the circuit.
  We also copy the information about $k$ to the output.
  
  For any $P \in A$, we have $M^{-\tau^M_r(P)}(\rho^M(P) \sqcup 0^{\Z^d \setminus [-r, m-1+r]^d}) = P \sqcup 0^{\Z^d \setminus [0, m-1]^d}$, and $\tau^M_r(P)$ is the smallest number for which the central pattern of the resulting configuration is in $A'$, since the patterns in $A$ have disjoint $0$-orbits and the head does not leave $[0, m-1]^d$ before $\tau^M_r(P)$ steps when started from $P$.
  If we feed $\rho^M(P)$ to the circuit constructed above, then it will compute the smallest number $t \in \N$ with $M^{-t}(\rho^M(P) \sqcup 0^{\Z^d \setminus [-r, m-1+r]^d})|_{[-r, m-1+r]^d} \in A'$, and this number is exactly $\tau^M_r(P)$.
  If the head leaves the region $[0, m-1]^d$ during the simulation, then it will do so after $\tau^M_r(P)$ steps, and any computation results made by the circuit after that are irrelevant (and probably incorrect).
  In particular, the circuit will correctly simulate $M^{-1}$ for at least $\tau^M_r(P)$ steps.
  Thus the circuit implements the desired function.
\end{proof}

\section{The Machine}
\label{sec:TheMachine}

In this section we present a physically universal two-dimensional Turing machine.
It has five states and uses a binary alphabet.
It is a \emph{generalized turmite}, meaning that the states correspond to the four cardinal directions plus some auxiliary data, and the machine operates by scanning and manipulating its surroundings, possibly changing its direction, and possibly taking one step forward.

The formal definition of the machine $M$ is as follows.
The tape alphabet is $\Sigma = \{0, 1\}$ and the state set is $Q = \{\stN, \stN^*, \stE, \stW, \stS\}$.
One step of $M$ is divided into three phases.
In the first phase we perform one of the following (non-overlapping) local transformations, rotated by any multiple of 90 degrees, where $a \in \{0, 1\}$ is arbitrary:
\begin{equation}
  \begin{array}{|c|c|}
  \hline
  a & 1 \\
  \hline
  0 \stN & a \\
  \hline
  \end{array}
  \ \leftrightarrow \
  \begin{array}{|c|c|}
  \hline
  a & 0 \\
  \hline
  1 \stW & a \\
  \hline
  \end{array}
\label{eq:Transes}
\end{equation}
Note that the transition can be applied in both directions: the pattern on the right hand side is replaced by the pattern on the left hand side.
In all cases, a single $1$ is moved diagonally toward the inner side of a turn on the path that the head of $M$ traces.
If no transformation is applicable, the machine maintains its position and state.
Note that $\stN^*$ does not occur in these patterns.
In the second phase, the head moves one step in the direction indicated by its state, unless the state is $\stN^*$, in which case it retains its state and position.
In the third phase, the head changes its state from $\stN$ to $\stN^*$ or vice versa if applicable, and retains its state if not.
Since each phase is specified by a bijection (the first and third phases are actually involutions), $M$ is reversible. The machine also conserves the number of $1$s in the configuration, i.e. it is symbol-conserving in the following sense.


\begin{definition}
\label{def:SymbolConserving}
A Turing machine $M = (Q, \Sigma, N, \delta)$ is \emph{symbol-conserving} if for all $(q[\vec v], x) \in X_Q \times \Sigma^{\Z^2}$, if $M(q[\vec v], x) = (p[\vec w], y)$ then there exists a permutation $\pi \in \Sym(\Z^2)$ with finite support such that $y = \pi(x)$, where $\Sym(\Z^2)$ acts on configurations by $\pi(x)_{\vec v} = x_{\pi^{-1}(\vec v)}$.
\end{definition}

We also define a simpler auxiliary machine $\bar M$ with state set $\bar Q = \{\stN, \stE, \stW, \stS\}$ and tape alphabet $\Sigma$, and which behaves by first applying one of the transformations in~\eqref{eq:Transes} if possible, and then advancing for one step in the direction of the head.
It is also reversible.

\begin{lemma}
\label{lem:Correspondence}
For each $z \in X_{\bar Q} \times \Sigma^{\Z^2}$ we have $\bar M(z) = M^k(z)$ for some $k \in \{1,2\}$, and for each $y \in X_Q \times \Sigma^{\Z^2}$ we have $M^k(y) \in X_{\bar Q} \times \Sigma^{\Z^2}$ for some $k \in \{0,1\}$.
\end{lemma}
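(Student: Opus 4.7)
The plan is a direct case analysis on the state of the head, resting on one simple observation: each of the three phases of $M$ collapses to the identity whenever it meets state $\stN^*$. Phase~1 is inert because $\stN^*$ appears in neither \eqref{eq:Transes} nor any of its 90-degree rotations; phase~2 is inert by the explicit exception for $\stN^*$; and phase~3 acts only on the pair $\{\stN, \stN^*\}$, toggling between them. This means that the only effect of a ``$\stN^*$-step'' is to switch the state back to $\stN$.

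For the first statement, I would fix $z \in X_{\bar Q} \times \Sigma^{\Z^2}$ and split on the state $s \in \bar Q$. If $s \in \{\stE, \stW, \stS\}$, phases~1 and~2 of $M$ perform exactly the same tape transformation and displacement as one step of $\bar M$, and phase~3 is a no-op on $s$, so $M(z) = \bar M(z)$ and $k=1$ works. If $s = \stN$, phases~1 and~2 again reproduce the action of $\bar M$ on the tape and on the head position, but phase~3 turns the state into $\stN^*$; by the observation above, a second application of $M$ then leaves tape and position alone and toggles the state back to $\stN$, giving $M^2(z) = \bar M(z)$, so $k = 2$ works. (The empty fixed configuration $(\#^{\Z^2}, x)$ is handled trivially by $k=1$, since both machines fix it.)

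For the second statement, I would fix $y \in X_Q \times \Sigma^{\Z^2}$ and again split on its state. If the state lies in $\bar Q$ (including the blank case) there is nothing to prove and $k=0$ works. Otherwise the state is $\stN^*$, and the observation above shows that a single step of $M$ leaves the tape and head position untouched while turning the state into $\stN \in \bar Q$, so $k=1$ works.

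I do not expect any real obstacle: the lemma is bookkeeping, formalizing the intuition that $M$ is $\bar M$ with one extra idle ``beat'' inserted whenever the head faces north, and this is precisely the symmetry-breaking trick introduced above the lemma. The only mild care needed is to verify that no rotation of the patterns in \eqref{eq:Transes} involves $\stN^*$, which is immediate from the choice of state set in \eqref{eq:Transes}.
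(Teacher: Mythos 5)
There is a genuine (though fixable) slip in your case analysis for the first claim: you key phase~3 on the head's state \emph{before} the step, but phase~3 acts on the state \emph{after} phases~1 and~2, and phase~1 can change the direction --- that is exactly what the transformations \eqref{eq:Transes} do, since they implement the turns. Concretely, if the head starts in state $\stE$ and phase~1 applies the rotated rule that turns it to $\stN$, then phase~3 converts this $\stN$ into $\stN^*$, so $M(z)$ ends in state $\stN^*$ while $\bar M(z)$ is in state $\stN$; your claim that $k=1$ works for every $s\in\{\stE,\stW,\stS\}$ fails here, and one needs $k=2$. Symmetrically, if the head starts in state $\stN$ and phase~1 turns it to $\stW$ or $\stE$, then phase~3 is a no-op and already $M(z)=\bar M(z)$, so your claim $M^2(z)=\bar M(z)$ is false in that sub-case (the second application of $M$ moves the head again). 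The correct dichotomy is governed by the post-turn direction, i.e.\ by the state of $\bar M(z)$: one has $k=2$ exactly when the head of $\bar M(z)$ points north, and $k=1$ otherwise.

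Your underlying observation is sound and is the intended bookkeeping: $\stN^*$ occurs in no rotation of \eqref{eq:Transes}, phase~2 explicitly idles on $\stN^*$, and phase~3 only toggles $\stN\leftrightarrow\stN^*$, so a step taken in state $\stN^*$ merely restores the state to $\stN$ without touching tape or position. With the case split re-keyed to the state after phase~1, this yields both halves of the lemma, and your treatment of the second claim ($k=0$ for states in $\bar Q$ or the headless configuration, $k=1$ for $\stN^*$) is correct as written. For comparison, the paper states this lemma without proof, as an immediate consequence of the definitions, so the only issue here is the misplaced conditioning in your case analysis.
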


We now show a connection between $\bar M$ and its inverse.
For this, we define a few auxiliary functions.
Denote the \emph{mirror map} by $\mu(a,b) = (-a,b)$ for $(a,b) \in \Z^2$, and extend it to $\Sigma^{\Z^2}$ by $\mu(x)_{\vec w} = x_{\mu(\vec w)}$.
Then extend it to $X_{\bar Q} \times \Sigma^{\Z^2}$ by $\mu(q[\vec v], x) = (q'[\mu(\vec v)], \mu(x))$, where $q'$ points in the direction opposite to $q$ if $q$ is horizontal, and $q' = q$ otherwise.
Define the \emph{bit flip map} by $\beta(x)_{\vec v} = 1 - x_{\vec v}$ for $x \in \Sigma^{\Z^2}$ and the \emph{opposite map} by $\omega(q[\vec v]) = \hat q[\vec v - \tilde q]$, where $\hat q$ points in the direction opposite to $q$, and $\tilde q$ is the neighbor of $\vec 0$ in the direction of $q$.
Finally, define $\sigma(q[\vec v], x) = \mu(\omega(q[\vec v]), \beta(x))$.

\begin{lemma}
  \label{lem:Inverse}
  The inverse of $\bar M$ satisfies $\sigma \circ \bar M^{-1} \circ \sigma = \bar M$.
\end{lemma}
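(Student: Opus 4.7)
The plan is to introduce $\phi := \sigma \circ S$, where $S$ and $T$ denote the step and transform phases of $\bar M$ (so $\bar M = S \circ T$), and reduce the claim to the commutation $\phi \circ T = T \circ \phi$. First I would verify that $\sigma$ is an involution, using that the constituent maps $\mu$, $\omega$, $\beta$ commute pairwise and are each involutions; the only nontrivial identities needed are $\mu(\tilde q) = \tilde{\mu(q)}$ and $\mu(\hat q) = \widehat{\mu(q)}$, both immediate from the fact that $\mu$ flips horizontal directions and fixes the $y$-axis.

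Next I would compute that $\sigma \circ S = S^{-1} \circ \sigma$; both compositions send a configuration $(q[\vec v], x)$ to the configuration $\phi(y)$ with head $\mu(\hat q)[\mu(\vec v)]$ and tape $\mu(\beta(x))$. The key identity here is $\tilde{\mu(\hat q)} = -\mu(\tilde q)$, which says that a forward step of $q$ and a backward step of $\mu(\hat q)$ correspond to each other under $\mu$. Combined with the fact that $T$ is an involution (the rules in \eqref{eq:Transes} are bidirectional), this yields
\[
\sigma \circ \bar M = \sigma \circ S \circ T = \phi \circ T, \qquad \bar M^{-1} \circ \sigma = T \circ S^{-1} \circ \sigma = T \circ \phi,
\]
so the desired identity $\sigma \circ \bar M^{-1} \circ \sigma = \bar M$ (equivalent to $\sigma \circ \bar M = \bar M^{-1} \circ \sigma$ via $\sigma^2 = \mathrm{id}$) reduces to $\phi \circ T = T \circ \phi$.

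The hard part is this commutation. It amounts to showing that the rule set of \eqref{eq:Transes} is invariant under the transformation that acts on a local $2\times 2$ pattern around the head by $\mu \circ \beta$ while changing the head direction from $q$ to $\mu(\hat q)$. This is a finite verification: applying $\mu \circ \beta$ to the LHS of the $0^\circ$ rule and flipping the head direction $\stN \mapsto \stS$ produces exactly the RHS of the $90^\circ$-counterclockwise rotation of the same rule (with the free parameter $a$ replaced by $\bar a$, which is still a free choice). The other three rotations then follow from the rotational symmetry of the rule set, and together they give the correspondence of both inputs and outputs needed to conclude $T \circ \phi = \phi \circ T$.

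The main obstacle I expect is noticing this ``hidden'' symmetry of the local rule: the transformation $(x, q) \mapsto (\mu\beta(x), \mu(\hat q))$ does not fix each rotation of the rule individually but rather permutes them in pairs, so one cannot simply argue by symmetry of a single rule. Once the correct pairing of rotations is identified, the remaining direction-and-position bookkeeping is routine.
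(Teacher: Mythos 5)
Your proposal is correct. The paper itself offers no detail here (its proof is the single sentence ``this follows from a simple case analysis''), so what you have done is supply a structured version of that verification rather than follow a written argument: you factor $\bar M = S \circ T$ into the swap phase $T$ and the move phase $S$, check that $\sigma$ is an involution, prove $\sigma \circ S = S^{-1} \circ \sigma$ via $\widetilde{\mu(\hat q)} = -\mu(\tilde q)$, and reduce the lemma to the commutation of $T$ with $\phi = \sigma \circ S$, which acts by $(q[\vec v], x) \mapsto (\mu(\hat q)[\mu(\vec v)], \mu\beta(x))$ (so $\stN \leftrightarrow \stS$, $\stE$ and $\stW$ fixed). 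I checked the pieces: the image under this map of the left-hand pattern of \eqref{eq:Transes} is indeed the right-hand pattern of its $90$-degree counterclockwise rotation (and the image of the right-hand pattern is the corresponding left-hand pattern), with the two cells marked $a$ becoming two cells marked $\bar a$ -- which matters, since the rule requires those two cells to be equal, and your transformation preserves that constraint; and since $\phi$ anticommutes with rotations while the rule set is rotation-invariant, the remaining three rotations do follow as you claim, including the case where no rule applies. Compared with a brute-force case analysis over head states and local $2\times 2$ contents, your factorization isolates the two genuinely nontrivial facts (the conjugation $\sigma S \sigma = S^{-1}$ and the hidden reflection symmetry of the rule set that permutes the rotations in pairs with sides swapped), at the cost of some bookkeeping with $\mu$, $\omega$, $\beta$; either route is a complete proof, and yours is arguably the more illuminating write-up of the ``simple case analysis'' the authors had in mind.
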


\begin{proof}
  This follows from a simple case analysis.
\end{proof}

The idea behind defining both $M$ and $\bar M$ is that while the latter has a simpler structure, it suffers from a parity issue.
Since the head of $\bar M$ always moves to an adjacent coordinate, $\bar M^n(q[(a,b)], x) = (p[(c,d)], y)$ implies that the number $n+a+b+c+d$ is even.
The machine $M$ uses a modulo-$2$ counter to simulate one step of $\bar M$ in two steps in those configurations where the head travels to the north.

We now prove that $M$ has no periodic points, similarly to Langton's ant, and can only escape a finite island of $1$s by stepping out of it and walking to infinity along a straight line.
Lemma~\ref{lem:Correspondence} allows us to use $\bar M$ in place of $M$ in the proof.

\begin{lemma}
  \label{lem:PushTwice}
  The machine $\bar M$ cannot makes two right turns in a row.
\end{lemma}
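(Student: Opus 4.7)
The plan is a direct case-check, set up so that the $90^\circ$ rotational symmetry of $\bar M$'s first-phase rule collapses all right turns to one representative. First I would observe that, up to a rotation of the whole configuration, the first right turn may be taken to be the $0^\circ$ instance of \eqref{eq:Transes} applied from right to left, which changes the head direction from $\stW$ to $\stN$. Placing the head cell at the origin, this means that just before the step the head lies on a $1$, the N- and E-cells both equal a common bit $a$, and the NE-cell equals $0$.

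I would then unpack exactly what phase 1 of the step rewrites: the head cell becomes $0$ (with the direction updated to $\stN$), the NE-cell becomes $1$, and the N- and E-cells still carry $a$. Phase 2 advances the head one step north, so the new head sits at position $(0,1)$ facing $\stN$. The key observation is that the S- and E-neighbors of the new head, namely $(0,0)$ and $(1,1)$, were just rewritten: they now hold $0$ and $1$ respectively.

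By the same rotational symmetry, an immediate second right turn from $\stN$ to $\stE$ would have to match the $90^\circ$ rotation of \eqref{eq:Transes} applied from right to left. Reading off that rotated pattern, the required local condition is that the head sits on a $1$ whose S- and E-neighbors carry a common bit and whose SE-neighbor equals $0$. In our configuration those two neighbors are $0$ and $1$, so the equality $\text{S}=\text{E}$ already fails, independently of the value of $a$. Hence no right-to-left rewrite is enabled, and in particular no right turn occurs at the next step.

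The only real obstacle is bookkeeping: keeping the coordinates, the two different rotations of \eqref{eq:Transes}, and the direction-versus-handedness conventions consistent. Once the first right turn has been normalized by the rotational symmetry, the argument reduces to reading off one $2 \times 2$ block from \eqref{eq:Transes} and its $90^\circ$ rotation; no global or topological ingredient is needed.
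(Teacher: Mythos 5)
Your proof is correct and takes essentially the same route as the paper: after the (rotation-normalized) right turn, the cell behind the head holds $0$ and the cell to its right holds $1$, while the right-turn instance of \eqref{eq:Transes} requires those two cells to carry a common bit $a$, so a second right turn is blocked. Your explicit coordinate bookkeeping is just an unwound version of the paper's one-line, head-frame statement of the same observation.
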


\begin{proof}
  After $\bar M$ has made a right turn, it has a $1$ on its right and a $0$ behind it, so it cannot make another right turn.
\end{proof}

\begin{lemma}
  \label{lem:FastEscape}
  Let $\vec u \in \Z^2$ and $(q[\vec v], x) \in X_Q \times \Sigma^{\Z^2}$ be such that $\vec v \in \vec u + [0, n-1]^2$.
  Then there exists $t = O(n^4)$ with $M^t(q[\vec v], x) = (p[\vec w], y)$ and $\vec w \notin \vec u + [0, n-1]^2$.
\end{lemma}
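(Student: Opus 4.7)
The plan is to first reduce to the simpler machine $\bar M$ via Lemma~\ref{lem:Correspondence}: one step of $\bar M$ is implemented by at most two steps of $M$, so an $O(n^4)$ escape-time bound for $\bar M$ transfers to $M$ up to a constant factor. So I would focus on proving the bound for $\bar M$.

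Suppose the head of $\bar M$ stays inside $D = \vec u + [0, n-1]^2$ for $T$ consecutive time steps. I would decompose the trajectory into maximal straight-line segments (during which the head moves in a single cardinal direction) separated by turn steps. Since $D$ has side length $n$, each straight segment has length at most $n$, so if $K$ denotes the total number of turn steps, then $T \leq n(K+1)$. Hence the task reduces to showing $K = O(n^3)$.

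Let $L$ and $R$ denote the numbers of left and right turns. Lemma~\ref{lem:PushTwice} gives $R \leq L + 1$, so it suffices to bound $L$ by $O(n^3)$. Every turn displaces exactly one $1$-cell by a unit diagonal step, and all such movements happen inside the enlarged region $\vec u + [-1, n]^2$, which contains $(n+2)^2 = O(n^2)$ possible $1$-positions. If one could argue that each $1$ in this enlarged region is involved in at most $O(n)$ turns while the head remains in $D$, then $L = O(n^3)$, and combining with the earlier step yields $T = O(n^4)$ as claimed.

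The main obstacle, and where I expect the bulk of the case analysis, is establishing this per-$1$ bound. The approach I would try is to combine Lemma~\ref{lem:PushTwice} with the rigid local structure of the turn rule (a turn requires a $1$ at a specific corner of a $2 \times 2$ square together with matching values at the two non-head, non-$1$ corners) to argue that a given $1$ cannot be shuttled back and forth by the head too many times without either being pushed toward the boundary of the enlarged region or forcing the head into a state that must leave $D$. A natural candidate is a monovariant on the positions of the $1$s, paired with an analysis of how consecutive left turns must be interleaved with either straight motion or isolated right turns. Designing this monovariant, and correctly handling cases where the head alternates its attention between many different $1$s, is where I expect the main difficulty to lie.
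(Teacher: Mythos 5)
Your reduction to $\bar M$ via Lemma~\ref{lem:Correspondence} and the use of Lemma~\ref{lem:PushTwice} to control right turns match the paper, but the heart of your argument is missing: the claim that each $1$ in the enlarged region participates in at most $O(n)$ turns is exactly the step you defer (``where I expect the main difficulty to lie''), and nothing in the proposal establishes it. It is not an innocuous bookkeeping step. A turn moves the relevant $1$ diagonally, but depending on the head's direction a turn can move a given $1$ either toward or away from any fixed reference point, so there is no obvious monotone quantity attached to a single $1$, and the head can revisit the same $1$ after excursions elsewhere. Without this per-$1$ bound your chain $T \leq n(K+1)$, $K = O(n^3)$ collapses, and you have no bound at all. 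Note also that your target $K = O(n^3)$ is a strictly stronger combinatorial statement than what is needed or what the paper proves: the paper's argument happily tolerates up to $\Theta(n^4)$ turns and bounds the \emph{time} directly.

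The paper's resolution is a global potential function rather than a per-$1$ accounting. With $\vec u$ normalized, it sets $\pi = \pi_1 + \pi_2$, where $\pi_2(x) = \sum_{\vec v \in R'} x_{\vec v}\,|\vec v|_1^2$ is the sum of \emph{squared} Manhattan distances of the $1$s in the bordered square $R'$, and $\pi_1$ is $\pm 2|\vec v|_1$ plus a constant, with sign and constant depending on the head's direction. A case analysis shows: a straight step decreases $\pi$ by $2$, a left turn decreases it by $4$, and a right turn leaves it unchanged (the change in $\pi_2$, which is $\approx \pm 4|\vec v|_1$ because of the squaring, is exactly cancelled by the change in $\pi_1$). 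Since two consecutive right turns are impossible by Lemma~\ref{lem:PushTwice}, $\pi$ strictly decreases every two steps, and since $\pi = O(n^4)$ initially and is bounded below, the head leaves in $O(n^4)$ steps. If you want to complete your proposal, this is the monovariant you are looking for; the key trick you would need to discover is the quadratic weighting of distances, which makes the potential change on a turn linear in $|\vec v|_1$ and hence cancellable by a direction-dependent term linear in the head's position.
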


\begin{proof}
  We prove the analogous result for $\bar M$, from which the claim follows by Lemma~\ref{lem:Correspondence}.
  
  We assume without loss of generality that $\vec u = (1,1)$, and denote $R = [1,n]^2$ and $R' = [0, n+1]^2$.
  We define a potential function $\pi : X_{\bar Q} \times \Sigma^{\Z^2}$ by $\pi(q[\vec v], x) = \pi_1(q,\vec v) + \pi_2(x)$, where
  \begin{align*}
  	\pi_1(\stE, \vec v) & {} = -2 |\vec v|_1 + 2 \\
  	\pi_1(\stN, \vec v) & {} = -2 |\vec v|_1 \\
  	\pi_1(\stW, \vec v) & {} = 2 |\vec v|_1 + 2 \\
  	\pi_1(\stS, \vec v) & {} = 2 |\vec v|_1
  \end{align*}
  and $\pi_2(x) = \sum_{\vec v \in R'} x_{\vec v} \cdot |\vec v|_1^2$.
  Concretely, $\pi_2$ is the sum of squares of the Manhattan distances from the origin to each $1$ in the square $R'$, which contains $R$ and its border.
  The intuition is that when the head of $\bar M$ makes a loop, it pulls $1$s closer to the center of $R'$, which decreases the value of $\pi_2$.
  We use $\pi_1$ as a balancing term to ensure that $\pi$ decreases at least every two steps.
  More explicitly, we claim that $\pi(y) > \pi(M^2(y))$ holds for all $y \in X_{\bar Q} \times \Sigma^{\Z^2}$ where the head lies in $R$.
  The maximum value of $\pi$ is $O(n^4)$, so the result follows from this.
  
  Suppose that the head of $\bar M$ is at a coordinate $\vec v \in R$.
  If $\bar M$ does not make a turn on this step, then the value of $\pi_1$ decreases by $2$ regardless of the state and position of the head, and $\pi_2$ retains its value, so that $\pi$ also decreases by $2$.
  
  Suppose that $\bar M$ makes a right turn.
  If the head was facing east, then the value of $\pi_1$ increases by $4|\vec v|_1 - 4$ and $\pi_2$ decreases by $4|\vec v|_1 - 4$, which means that $\pi$ retains its value.
  If the head was facing west, then $\pi_1$ decreases by $4|\vec v|_1 + 4$ and $\pi_2$ increases by $4|\vec v|_1 + 4$, so $\pi$ again retains its value.
  If the head was facing north or south, then both $\pi_1$ and $\pi_2$ retain their value.
  Thus the value of $\pi$ stays the same on a right turn.
  
  Suppose then that $\bar M$ makes a left turn.
  If the head was facing north, then $\pi_1$ increases by $4|\vec v|_1$ and $\pi_2$ decreases by $4|\vec v|_1 + 4$, so $\pi$ decreases by $4$.
  If the head was facing south, then $\pi_1$ decreases by $4|\vec v|_1$ and $\pi_2$ increases by $4|\vec v|_1 - 4$, so $\pi$ decreases by $4$.
  If the head was facing east or west, then $\pi_1$ decreases by $4$ and $\pi_2$ retains its value.
  Thus the value of $\pi$ always decreases by $4$ on a left turn.
  
  All in all, the value of $\pi$ never increases, and only stays the same on a right turn.
  By Lemma~\ref{lem:PushTwice}, $\bar M$ cannot make two consecutive right turns, so the claim follows.
\end{proof}

\begin{corollary}
  \label{cor:TMNoPeriodic}
  The machine $M$ has no nontrivial periodic points in the moving head model.
\end{corollary}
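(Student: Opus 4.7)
The plan is to argue by contradiction directly from Lemma~\ref{lem:FastEscape}. Suppose that $(\stateat{q}{\vec v}, x) \in X_Q \times \Sigma^{\Z^2}$ is a nontrivial periodic point of $M$ in the moving head model, meaning that the configuration carries a head (so $q \in Q$) and $M^t(\stateat{q}{\vec v}, x) = (\stateat{q}{\vec v}, x)$ for some period $t \geq 1$.

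Because the orbit has only finitely many configurations, the set of head positions visited along the orbit, say $\vec v_0, \vec v_1, \ldots, \vec v_{t-1}$ with $\vec v_0 = \vec v$, is a finite subset of $\Z^2$. Consequently there exist $\vec u \in \Z^2$ and $n \in \N$ with $\vec v_i \in \vec u + [0, n-1]^2$ for every $i \in \{0, 1, \ldots, t-1\}$.

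Now apply Lemma~\ref{lem:FastEscape} to the configuration $(\stateat{q}{\vec v}, x)$ with this choice of $\vec u$ and $n$: there is a time $t' = O(n^4)$ such that $M^{t'}(\stateat{q}{\vec v}, x) = (\stateat{p}{\vec w}, y)$ with $\vec w \notin \vec u + [0, n-1]^2$. On the other hand, periodicity yields $M^{t'}(\stateat{q}{\vec v}, x) = M^{t' \bmod t}(\stateat{q}{\vec v}, x)$, whose head position is $\vec v_{t' \bmod t}$ and hence lies in $\vec u + [0, n-1]^2$. This is a contradiction, so no such nontrivial periodic point can exist.

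I do not expect any real obstacle: all the technical content sits in Lemma~\ref{lem:FastEscape}, and the corollary is essentially the observation that a periodic orbit has a bounded set of head positions while the escape lemma forces the head to leave any enclosing square in a time depending only on the square's side length. The only subtlety worth mentioning is that the configuration $x$ may have infinitely many $1$s, but this does not affect the argument since Lemma~\ref{lem:FastEscape} imposes no restriction on the tape content.
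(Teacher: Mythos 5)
Your argument is correct and is exactly the intended one: the paper states the corollary as an immediate consequence of Lemma~\ref{lem:FastEscape} (a periodic orbit would confine the head to a fixed square forever, contradicting the escape bound), which is precisely what you spell out.
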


\begin{lemma}
  \label{lem:NoEscape}
  Let $x = (q[\vec{v}], y) \in X_Q \times \Sigma^{\Z^2}$ be a configuration with a finite number of $1$s, and let $P = [a,b] \times [c,d]$ be a finite rectangular region such that $\vec v \in P$ and $y_{\vec w} = 0$ for all $\vec w \notin P$.
  For all $k \geq 0$, the configuration $M^k(x)$ does not contain a $1$ outside $P' = [a-1,b+1] \times [c-1,d+1]$.
\end{lemma}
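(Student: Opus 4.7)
The plan is to prove the lemma by induction on $k$ with a strengthened invariant asserting: (i) all $1$s in $M^k(x)$ lie in $P'$; (ii) if the head lies outside $P'$, its state is a cardinal direction (or $\stN^{*}$) pointing ``straight away'' from $P'$, so that its $2\times 2$ neighborhood is entirely outside $P'$ and contains only $0$s; and (iii) if the head sits on the boundary of $P'$, its state lies in a side-dependent ``safe'' list---on the east side $\{x=b+1\}$ the safe states are $\stE$ and $\stS$, with the other three sides handled analogously by the rotational symmetry of $\bar M$, and corners inherit the intersection of adjacent safe lists (which leaves, for instance, only $\stS$ at the south-east corner $(b+1,c-1)$). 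The base case $k=0$ is immediate since $\vec v\in P$ lies strictly inside $P'$ and all $1$s are in $P\subseteq P'$.

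For the inductive step, assume the invariant at time $k$ and deduce it at $k+1$ by splitting on the head's location in $M^k(x)$. If the head is strictly outside $P'$, (ii) ensures a zero neighborhood, so phase $1$ does nothing and phases $2,3$ preserve the invariant. If the head is in the interior of $P'$, its $2\times 2$ neighborhood lies in $P'$, so any rule that fires keeps $1$s in $P'$. If the head is on the boundary, (iii) restricts which rule rotations can fire, and one verifies that in each safe state the $1$ moved by an $L\to R$ transformation lands at the head position (in $P'$) and any $1$ moved by $R\to L$ lands at a corner still in $P'$: for example, at $(b+1,y)$ in the interior of the east side the safe states $\stE$ and $\stS$ only admit the $180^\circ$ and $270^\circ$ CW $R\to L$ rules, whose corners $(b,y-1)$ and $(b,y+1)$ both lie in $P'$.

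The main obstacle is verifying that (iii) is preserved. Unsafe boundary states split into two types. \emph{Translational} unsafe states such as $\stW$ at $(b+1,y)$ are ruled out directly by (ii): a step ending in state $\stW$ at $(b+1,y)$ would require the head to be at $(b+2,y)$ in state $\stW$ before phase $2$, but (ii) forces the state there to be $\stE$, and phase $1$ cannot transform $\stE$ into $\stW$. \emph{Toggle} unsafe states such as $\stN$ and $\stN^{*}$ at $(b+1,y)$ are ruled out by unwinding the $\stN\leftrightarrow\stN^{*}$ phase-$3$ toggle: state $\stN$ at $(b+1,y)$ at time $k+1$ forces state $\stN^{*}$ at $(b+1,y)$ at time $k$, already unsafe by the inductive hypothesis, and state $\stN^{*}$ at $(b+1,y)$ at time $k+1$ forces state $\stN$ at $(b+1,y-1)$ at time $k$ (via the $\stN,\stN^{*},\stN,\ldots$ north-marching pattern with no phase-$1$ firing), again unsafe. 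Together with the rotational symmetry of $\bar M$ and Lemma~\ref{lem:Correspondence} this handles all four sides and corners and closes the induction, yielding (i) at time $k+1$.
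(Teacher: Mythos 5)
Your route is genuinely different from the paper's: the paper considers the first time step at which a $1$ leaves $P$ (not $P'$), notes that up to that moment the head never left $P$, rules out a left turn at that moment, and shows that after the forced right turn the head travels in a straight line forever, so at most one $1$ ever leaves $P$ and it lands one diagonal step outside, hence in $P'$. Your step-by-step induction with boundary safe lists could be made to work, but as written the invariant (iii) is not the one the machine preserves, and the culprit is $\stN^*$ together with the appeal to ``the rotational symmetry of $\bar M$''. You are running the induction for $M$ itself (you use the three phases and the $\stN\leftrightarrow\stN^*$ toggle), and $M$ is not rotationally symmetric. On the north side of $P'$ the head genuinely occurs in state $\stN^*$: if it sits at $(x,d)\in P$ in state $\stN$ with no applicable transformation, then after phases 2 and 3 it is at $(x,d+1)$ in state $\stN^*$; similarly a right turn $\stW\to\stN$ at $(a-1,y-1)$ leaves the head at $(a-1,y)$ in state $\stN^*$. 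So the rotated safe lists $\{\stN,\stE\}$ and $\{\stW,\stN\}$ (and the corresponding corner lists) are violated at the very next step. Either the safe lists must include $\stN^*$ wherever they include $\stN$, with the verification redone by hand since symmetry is unavailable, or --- much cleaner, and what the paper does --- the whole argument should be carried out for $\bar M$, with Lemma~\ref{lem:Correspondence} invoked once at the end, rather than mixing $M$'s phases with $\bar M$'s symmetry.

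Two further points. In the toggle case, $\stN^*$ at $(b+1,y)$ at time $k+1$ forces the head to have been at $(b+1,y-1)$ at time $k$ in a state that phase 1 maps to $\stN$; besides $\stN$ (unsafe) and $\stW$ (unsafe) this includes $\stE$, which \emph{is} safe there, via the left turn $\stE\to\stN$, so ``no phase-1 firing'' is not given by the inductive hypothesis: you must add that this left turn's witness $1$ would have to sit at $(b+2,y-2)$, outside $P'$, hence is $0$ by (i). The same extra argument is needed at several corner arrivals (e.g.\ ruling out state $\stE$ at $(b+1,c-1)$ arriving from $(b,c-1)$ via the left turn $\stS\to\stE$, whose witness would be at $(b-1,c-2)$), none of which are actually checked. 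Finally, (ii) is overstated: for a fixed state the rule consults two different $2\times 2$ blocks (for $\stN$: the NE cell for a left turn and the SE cell for a right turn), and a head just outside a corner of $P'$, say at $(b+1,c-2)$ facing south, has cells of $P'$ --- possibly containing $1$s --- in its rear block. What is true and suffices is that outside $P'$ the head stands on a $0$ (killing right turns, by (i)) and its front-right diagonal lies outside $P'$ (killing left turns), so it never turns again; with that reformulation, and the repairs above, your induction closes.
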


\begin{proof}
  We again prove the result for $\bar M$ and apply Lemma~\ref{lem:Correspondence}.

  Let $n \geq 0$ be the lowest integer such that $\bar M^{n+1}(x)$ contains a $1$ outside of $P$, if one exists.
  We assume without loss of generality that the head is pointing north in $\bar M^n(x)$ at some coordinate $(i,j) \in \Z^2$.
  We first observe that for all $m < n$, the head of $\bar M^m(x)$ is inside $P$.
  Namely, if the head was outside $P$, then it could not make a turn, since that would involve moving a $1$ out of $P$ or standing on a $1$ that is already outside $P$, neither of which happens before $n$ steps.
  Since $P$ is rectangular, the head can never return near it, and hence cannot move a $1$ outside it in the transition from $\bar M^n(x)$ to $\bar M^{n+1}(x)$.
  
  The transition from $\bar M^n(x)$ to $\bar M^{n+1}(x)$ is either a left or right turn, and in either case, a $1$ is moved from $P$ to its complement.
  Suppose first that the turn is to the left, so that $\bar M^n(x)_{(i,j)} = 0$ and $(i,j) \notin P$.
  Since the head is inside $P$ in $\bar M^{n-1}(x)$, we have $(i,j-1) \in P$.
  The rectangular shape of $P$ implies $(i+1,j+1) \notin P$, so this coordinate contains a $0$ in $\bar M^n(x)$.
  Then the head cannot make a left turn from $\bar M^n(x)$, a contradiction.
  
  Suppose then that $\bar M$ turns to the right, so that $\bar M^n(x)_{(i,j)} = 1$.
  Then we have $(i,j) \in P$ and $(i+1,j-1) \notin P$.
  We split into two cases depending on whether $P$ contains $(i, j-1)$.
  Suppose first that $(i, j-1) \in P$ and denote $R = [i+1, \infty) \times [j-1, j+1] \subset \Z^2$.
  Since $P$ is rectangular, we have $R \cap P = \emptyset$, so that $\bar M^{n+1}(x)_{\vec v} = 0$ for all $\vec v \in R \setminus \{(i+1,j-1)\}$.
  In $\bar M^{n+1}(x)$, the head is at $(i+1,j)$ facing east, after which it keeps traveling east inside the region $R$, never making a turn again.
  
  Suppose then that $(i, j-1) \notin P$.
  Since $P$ is rectangular, we have $(i', j-1) \notin P$ and hence $\bar M^n(x)_{(i', j-1)} = 0$ for all $i' \in \Z$.
   Hence the head travels east along the south border of $P$ until it either exits $P$ or makes another right turn, after which it can never make a turn again.
  In all cases, no $1$s are moved outside $P'$.
\end{proof}

The proof also shows that once the head leaves the region $[a-1,b+1] \times [c-1,d+1]$, it never makes a turn again, traveling in a straight line forever.
We will later use this fact to construct a gadget that catches the head and redirects it into a system of circuitry.
Using Lemma~\ref{lem:Correspondence} and Lemma~\ref{lem:Inverse}, we obtain the analogues of Lemma~\ref{lem:FastEscape} and Lemma~\ref{lem:NoEscape} for $M^{-1}$, but with the roles of $0$ and $1$ reversed.

\begin{remark}
It is a well-known conjecture that Langton's ant has a similar property as the one proved in Lemma~\ref{lem:FastEscape}: that started from a configuration of finite support, the head eventually leaves the pattern, and begins traveling in a rational direction in an eventually periodic pattern (in particular the spacetime diagram is semilinear). Since Langton's ant and $\bar M$ have the same state set and tape alphabet, one may wonder if they are the same machine in some dynamical sense. They are quite different: The distinct limit behavior on finite points shows that they are not conjugate by linear automorphisms of $\Z^2$ and automorphisms of $X_Q \times \Sigma^{\Z^2}$. In fact, since Langton's ant leaves ``garbage'' behind it on its diagonal journeys, the two machines cannot even admit spatiotemporal blockings that are conjugate by linear transformations of the tape and conjugacies between the configurations spaces.
\end{remark}

\section{Sequential Automata}

In this section, we define a class of token-based circuits that will later be simulated by $M$.
We do this in the framework of sequential automata; see \cite{Pr83} for a classical overview on this subject.
We extend this formalism slightly by introducing a time parameter that tracks the progress of the token inside the circuit.

\begin{definition}
A \emph{(deterministic weighted) sequential automaton} or \emph{s-automaton} is a $5$-tuple $A = (Q, I, O, \delta, \mu)$, where $Q$ is a finite set of \emph{internal states}, $I$ is a finite set of \emph{input terminals}, $O$ is a finite set of \emph{output terminals} disjoint from $I$, $\delta : Q \times I \pto Q \times O$ is the \emph{partial transition function}, and $\mu : Q \times I \pto \R$ is the \emph{partial weight function} that assigns a weight to each transition, and is defined on the same set as $\delta$ is.
If $\mu$ is a constant function, we may replace it with said constant value.

We extend $\delta$ into a partial function from $Q \times I^*$ to $Q \times O^*$ by defining $\delta(q, \lambda) = (q, \lambda)$ and $\delta(q, w i) = (p, v o)$ if $\delta(q, w) = (p', v)$ and $\delta(p', i) = (p, o)$.
Likewise, we extend $\mu$ by $\mu(q, \lambda) = 0$ and $\mu(q, w i) = \mu(q, w) + \mu(p', i)$.
\end{definition}

An s-automaton is formally exactly equivalent to a deterministic weighted transducer, but we handle them as circuit components.
The intuition is that a single token is inserted into the component from an input terminal, and the component ejects it from an output terminal, possibly updating its internal state in the process.

\begin{definition}
\label{def:SAutSimulation}
An s-automaton $A = (Q, I, O, \delta, \mu)$ \emph{simulates} another s-automaton $B = (Q', I, O, \delta', \mu')$ from state $q' \in Q'$, if there exists a state $q \in Q$ such that for all words $w \in I^*$ such that $\delta'(q', w)$ is defined, we have $v = v'$ where $\delta(q, w) = (p, v)$ and $\delta'(q', w) = (p', v')$, and $\mu(q, w) = \mu'(q', w)$.
\end{definition}

Note that in this definition we do not require the transition functions of $A$ and $B$ to be defined on exactly the same set of input words: $A$ is allowed to process strictly more inputs than $B$.

\begin{definition}
If $A = (Q, I, O, \delta, \mu)$ and $B = (Q', I', O', \delta', \mu')$ are two s-automata, the \emph{product} $A \otimes B$ is defined as $(Q \times Q', I \dot\cup I', O \dot\cup O', \delta'', \mu'')$ where $\delta((q, q'), i) = ((p, q'), o)$ and $\mu((q, q'), i) = \mu(q, i)$ for $i \in I$ where $\delta(q, i) = (p, o)$, and $\delta((q, q'), i') = ((q, p'), o')$ and $\mu((q, q'), i') = \mu'(q', i')$ for $i' \in I'$ where $\delta(q', i') = (p', o')$.

For an s-automaton $A = (Q, I, O, \delta, \mu)$, $i \in I$ and $o \in O$, the \emph{$(i,o)$-feedback} $A^i_o$ is defined as $(Q, I \setminus \{i\}, O \setminus \{o\}, \delta', \mu')$, where $\delta'(q, i') = (p, o')$ if either $\delta(q, i') = (p, o')$ or we have a path $\delta(q, i') = (q_0, o)$, $\delta(q_k, i) = (q_{k+1}, o)$ for $k < n$, and $\delta(q_n, i) = (p, o')$, for $i' \in I \setminus \{i\}$ and $o' \in O \setminus \{o\}$.
We also define $\mu'(q, i') = \mu(q, i') + \sum_{k \leq n} \mu(q_k, i)$.
\end{definition}

These operations allow us to construct new s-automata by combining existing ones into circuits.

\begin{definition}
Let $\mathcal{A}$ be a set of s-automata and $n \geq 1$.
A \emph{normed network of size $n$} over $\mathcal{A}$ is an s-automaton constructed from $n$ copies of the automata in $\mathcal{A}$ using products and feedbacks.
\end{definition}

The term normed network was defined by 

We define two classes of s-automata that are relevant to our construction.
Both of them contain only constant-weight automata whose terminals can be used at most once.
For this reason, we call them \emph{disposable}.

\begin{definition}
\label{def:PrimComponents}
For $n \geq 1$, the \emph{disposable $n$-merge} is the s-automaton
\[
  \bar{m}^n = (\{a, b\}, \{i_0, i_1, \ldots, i_{n-1}\}, \{o\}, \delta, \max(1, n-1))
\]
where $\delta(a, i_k) = (b, o)$ for $k \in [0, n-1]$.
If $n = 1$, we call it the trivial merge, and denote $\bar{t} = \bar{m}^1$.
If $n = 2$, we call it the disposable merge, and denote $\bar{m} = \bar{m}^2$.

For $n, k \geq 1$, the \emph{disposable $(n,k)$-switch} is defined as the s-automaton $\bar{s}^{(n,k)} = (Q, I, O, \delta, \max(n, k(k-1)-1))$ where $Q = \{a^N_x \;|\; N \subset [0, k-1], x \in \{0,1\}\}$, $I = \{i_0, \ldots, i_{n-1}, i'_0, \ldots, i'_{k-1}\}$, $O = \{o_0, \ldots, o_{n-1}, o'_0(0), o'_0(1), \ldots, o'_{k-1}(0), o'_{k-1}(1)\}$ and $\delta(a^{[0,k-1]}_0, i_j) = (a^{[0,k-1]}_1, o_j)$ for $j \in [0, n-1]$ and $\delta(a^N_x, i'_j) = (a^{N \setminus \{j\}}_x, o'_j(x))$ for $N \subset [0,k-1]$, $j \in N$ and $x \in \{0,1\}$.
If $n = k = 1$, we call it the disposable switch, and denote $\bar{s} = \bar{s}^{(1,1)}$.
When $n = 1$ or $k = 1$, we may also suppress the respective indices from $I$ and $O$, and in the case of $k = 1$, denote $a^{\{0\}}_x = a_x$ and $a^\emptyset_x = b_x$.

The trivial merge, disposable merge and disposable switch are called \emph{primitive components}.
\end{definition}

The disposable $n$-merge simply combines $n$ inputs into one, and the purpose of the trivial merge is to control the weights of normed networks in our formalism.
The purpose of the disposable $(n,k)$-switch is to store one bit of information to be read later.
It has $n$ input terminals, any of which can be used to switch an internal bit from $0$ to $1$ before the other inputs are used.
Each of the $k$ additional input terminals is linked to one of two outputs depending on the bit, and they can be used in any order.

The disposable $n$-merge can be simulated by $n-1$ disposable $2$-merges and $O(n)$ trivial merges, and the disposable $(n,k)$-switch can be simulated by $n k$ disposable $(1,1)$-switches, $k$ disposable $k$-merges and $O(\max(n k, k^2))$ trivial merges.
The smallest versions of these s-automata are shown in Figure~\ref{fig:s-automata}, and the constructions of the larger versions are shown in Figure~\ref{fig:s-automata-cons}.

\begin{lemma}
\label{lem:NetworkWeight}
If $N$ is a normed network over $\bar{t}$, $\bar{m}$ and $\bar{s}$ of size $n$, then the weight of any transition of $N$ is at most $2 n$.
\end{lemma}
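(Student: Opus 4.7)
The plan is to reduce the weight of any transition of $N$ to a count of primitive-component firings. First I would verify that every defined transition of each primitive has weight exactly $1$: plugging the parameters of Definition~\ref{def:PrimComponents} into the weight functions gives $\max(1, 1-1) = 1$ for both $\bar{t}$ and $\bar{m}$, and $\max(1, 1 \cdot 0 - 1) = 1$ for $\bar{s}$.

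Next I would bound how many times each primitive component $c$ can fire during a single transition of the ambient network. A glance at the transition graphs shows that $\bar{t}$ admits at most one transition before state $b$ is reached, from which nothing is defined; similarly for $\bar{m}$. The disposable switch $\bar{s}$ admits the two-step sequence $i_0, i'_0$ from its initial state $a^{\{0\}}_0$ (taking it through $a^{\{0\}}_1$ to $a^\emptyset_1$) and admits strictly fewer transitions from any other state. Setting $f(\bar{t}) = f(\bar{m}) = 1$ and $f(\bar{s}) = 2$, we have $f(c) \leq 2$ uniformly.

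The core of the proof is an induction on the construction of $N$ by products and feedbacks, establishing the invariant that any defined transition $\delta_N(q, i)$ unfolds into a finite sequence of primitive firings whose total weight equals $\mu_N(q, i)$. For products the statement is immediate from the definition of $\delta$ and $\mu$ on $A \otimes B$, while for feedbacks it follows directly from the formula $\mu'(q, i') = \mu(q, i') + \sum_{k \leq n} \mu(q_k, i)$, which sums the weights of the sub-transitions traversed in the feedback loop. Along the way one checks that, for a fixed network transition, the subsequence of firings restricted to any single primitive $c$ is itself a valid transition sequence of $c$ starting from its local state at the beginning of the transition, so that at most $f(c) \leq 2$ firings of $c$ occur.

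Summing over the $n$ primitive copies in $N$ then yields at most $2n$ firings per network transition, each of weight $1$, giving the desired bound of $2n$ on $\mu_N(q, i)$. The only real subtlety will be the formal bookkeeping when nested feedbacks cause primitives to be revisited in complicated orders, but since we only track how many times each primitive fires (and not how the firings are interleaved across primitives), this is a notational rather than a mathematical obstacle.
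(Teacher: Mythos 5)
Your proposal is correct and follows essentially the same route as the paper's (one-line) proof: each of $\bar{t}$, $\bar{m}$, $\bar{s}$ has weight $1$ per transition and can fire at most twice, so a single network transition accumulates weight at most $2n$; your induction over products and feedbacks just makes explicit the bookkeeping the paper leaves implicit. (Only a cosmetic slip: for $\bar{m}=\bar{m}^2$ the weight is $\max(1,2-1)=1$, not $\max(1,1-1)$, which does not affect anything.)
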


\begin{proof}
Each component of $N$ has weight $1$, and the token can traverse them at most twice.
\end{proof}

\begin{figure}[ht]
\begin{center}
\begin{tikzpicture}

\begin{scope}[xshift=-2.5cm]
\fill (0.25,0.5) circle (0.07);
\draw (0.25,0) -- (0.25,1);

\node [above] at (0.25,1) {$i$};
\node [below] at (0.25,0) {$o$};
\end{scope}

\begin{scope}
\draw [ultra thick] (0,0.5) -- (1,0.5);
\draw (0.25,0.5) -- (0.25,1);
\draw (0.5,0.5) -- (0.5,0);
\draw (0.75,0.5) -- (0.75,1);

\node [above] at (0.25,1) {$i_0$};
\node [above] at (0.75,1) {$i_1$};
\node [below] at (0.5,0) {$o$};
\end{scope}

\begin{scope}[xshift=3cm]
\draw (0,0.25) rectangle (1,0.75);
\draw (0.5,0.75) -- (0.5,1);
\draw (0.3,0.25) -- (0.3,0);
\draw (0.7,0.25) -- (0.7,0);
\draw (-0.25,0.5) -- (0,0.5);
\draw [dotted] (0,0.5) -- (1,0.5);
\draw (1,0.5) -- (1.25,0.5);

\node [above] at (0.5,1) {$i'$};
\node [below left] at (0.3,0) {$o'(0)$};
\node [below right] at (0.7,0) {$o'(1)$};
\node [left] at (-0.25,0.5) {$i$};
\node [right] at (1.25,0.5) {$o$};
\end{scope}

\end{tikzpicture}
\end{center}
\caption{The trivial merge $\bar{t}$, the disposable merge $\bar{m}$ and the disposable switch $\bar{s}$.}
\label{fig:s-automata}
\end{figure}
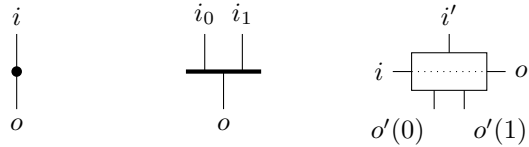

\begin{figure}[ht]
\begin{center}
\begin{tikzpicture}

\begin{scope}
\draw [ultra thick] (0,0.5) -- (3,0.5);
\draw (0.25,0.5) -- (0.25,1);
\draw (0.75,0.5) -- (0.75,1);
\draw (1.25,0.5) -- (1.25,1);
\draw (1.75,0.5) -- (1.75,1);
\draw (2.25,0.5) -- (2.25,1);
\draw (2.75,0.5) -- (2.75,1);
\draw (1.5,0.5) -- (1.5,0);

\node [above] at (0.25,1) {$i_0$};
\node [above] at (0.75,1) {$i_1$};
\node [above] at (1.25,1) {$i_2$};
\node [above] at (1.75,1) {$i_3$};
\node [above] at (2.25,1) {$i_4$};
\node [above] at (2.75,1) {$i_5$};
\node [below] at (1.5,0) {$o$};
\end{scope}

\begin{scope}[xshift=5cm,yshift=-1cm]
\draw (0,0.25) rectangle (3,2.75);
\foreach \a in {0,1,2}{
  \draw (\a+0.5,2.75) -- (\a+0.5,3);
  \draw (\a+0.3,0.25) -- (\a+0.3,0);
  \draw (\a+0.7,0.25) -- (\a+0.7,0);
}
\foreach \b in {0,1,2}{
  \draw (-0.25,\b+0.5) -- (0,\b+0.5);
  \draw [dotted] (0,\b+0.5) -- (3,\b+0.5);
  \draw (3,\b+0.5) -- (3.25,\b+0.5);
}

\node [above] at (0.5,3) {$i'_0$};
\node [above] at (1.5,3) {$i'_1$};
\node [above] at (2.5,3) {$i'_2$};
\node [below] at (0.5,0) {$o'_0(x)$};
\node [below] at (1.5,0) {$o'_1(x)$};
\node [below] at (2.5,0) {$o'_2(x)$};
\node [left] at (-0.25,0.5) {$i_0$};
\node [left] at (-0.25,1.5) {$i_1$};
\node [left] at (-0.25,2.5) {$i_2$};
\node [right] at (3.25,0.5) {$o_0$};
\node [right] at (3.25,1.5) {$o_1$};
\node [right] at (3.25,2.5) {$o_2$};
\end{scope}

\begin{scope}[yshift=-5cm]
\draw [ultra thick] (0.1,1.5) -- (0.9,1.5);
\draw [ultra thick] (1.1,1.5) -- (1.9,1.5);
\draw [ultra thick] (2.1,1.5) -- (2.9,1.5);

\draw [ultra thick] (0.6,1) -- (1.4,1);

\draw [ultra thick] (1.1,0.5) -- (1.9,0.5);


\foreach \x in {0.25,0.75,...,2.75}{
  \draw (\x,2) -- (\x,1.5);
}
\draw (0.5,1.5) |- (0.75,1.25) -- (0.75,1);
\draw (1.5,1.5) |- (1.25,1.25) -- (1.25,1);
\draw (1,1) |- (1.25,0.75) -- (1.25,0.5);
\draw (2.5,1.5) |- (1.75,0.75) -- (1.75,0.5);
\draw (1.5,0.5) -- (1.5,0);

\fill (2.5,1) circle (0.07);

\end{scope}

\begin{scope}[xshift=4.5cm,yshift=-6cm]
\foreach \x in {0,1.5,3}{
  \begin{scope}[xshift=\x cm]
  \foreach \y in {0,1.5,3}{
    \begin{scope}[yshift=\y cm]
      \draw (0,0.25) rectangle (1,0.75);
      \draw (0.5,0.75) -- (0.5,1);
      \draw (0.3,0.25) -- (0.3,0);
      \draw (0.7,0.25) -- (0.7,0);
      \draw (-0.5,0.5) -- (0,0.5);
      \draw [dotted] (0,0.5) -- (1,0.5);
      \draw (1,0.5) -- (1.4,0.5);
    \end{scope}
  }
  \draw [ultra thick] (0.6,0) -- (1.3,0);
  \draw (0.7,3) -| (1.2,0);
  \draw (0.7,1.5) -| (1.1,0);
  \draw (0.3,3) |- (0.5,2.5);
  \draw (0.3,1.5) |- (0.5,1);
  \draw (0.9,0) -- (0.9,-0.3);
  \draw (0.3,0) -- (0.3,-0.3);
  \fill (1.2,2.25) circle (0.07);
  \fill (1.1,0.75) circle (0.07);
  \fill (1.2,0.75) circle (0.07);
  \fill (0.3,0.1) circle (0.07);
  \fill (0.3,-0.1) circle (0.07);
  \end{scope}
}
\foreach \y in {0,1.5,3}{
  \fill (-0.15,\y+0.5) circle (0.07);
  \fill (-0.35,\y+0.5) circle (0.07);
}
\end{scope}

\end{tikzpicture}
\end{center}
\caption{Constructing the disposable $n$-merge $\bar{m}^n$ (on the left with $n = 6$) and the disposable $(n,k)$-switch $\bar{s}^{(n,k)}$ (on the right with $n = k = 3$) from primitive components. The trivial merges guarantee that every path through the normed network goes through an equal number of primitive components, all of which have weight $1$.}
\label{fig:s-automata-cons}
\end{figure}
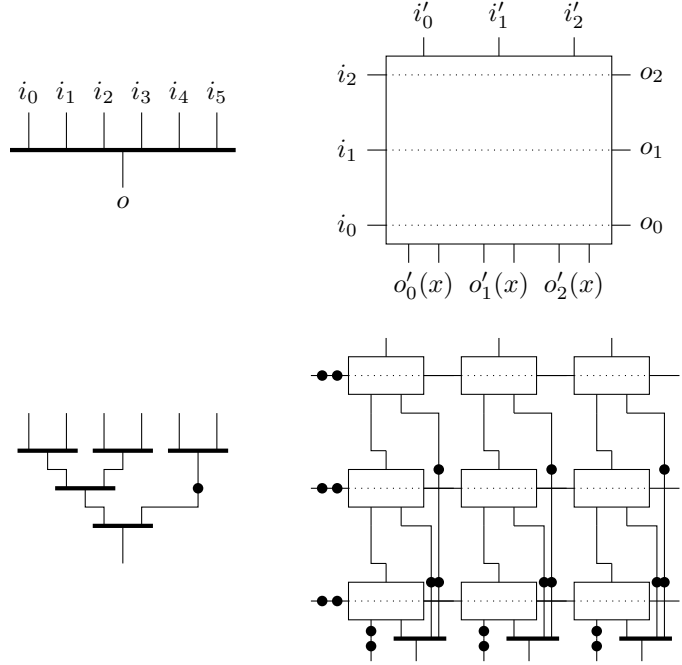

We show that any Boolean circuit can be simulated by a normed network over the merge and switch in a certain sense.
This result will be used as a black box later in the construction.

\begin{lemma}
\label{lem:CircuitAutomaton}
Let $k \in \N$, and let $C : \{0, 1\}^k \to \{0, 1\}^k$ be a Boolean circuit composed of $n \geq k$ gates (unlimited-fanin AND and unary NOT, each with unlimited fanout).
Then there exists a constant-weight normed network $A_C = (Q_C, I, O, \delta_C, T)$ of size $O(k n^2)$ and weight $T = O(k n^2)$ over the components $\bar{t}$, $\bar m$ and $\bar s$ with input terminal set $I = \{i_0(0), i_0(1), \ldots, i_{k-1}(0), i_{k-1}(1), \#, i^0, \ldots, i^{k-1}\}$ and output terminal set $O = \{o_0, \ldots, o_{k-1}, \$, o^0(0), o^0(1), \ldots, o^{k-1}(0), o^{k-1}(1)\}$, and a state $q_0 \in Q_C$ such that for all $w \in \{0, 1\}^k$ we have $\delta_C(q_0, i_0(w_0) i_1(w_1) \cdots i_{k-1}(w_{k-1}) \# i^0 \cdots i^{k-1}) = (p, o_0 o_1 \cdots o_{k-1} \$ o^0(v_0) o^1(v_1) \cdots o^{k-1}(v_{k-1}))$ for some $p \in Q$ and $v = C(w)$.
\end{lemma}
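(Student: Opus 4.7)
The plan is to represent every wire of $C$ by a \emph{wire module} made of primitive switches, and to thread the three phases (input loading, circuit evaluation, output readout) through these modules by routing single tokens.

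For each wire $w$ with fanout $f_w$ (counting, for output wires, one extra slot for the final query $i^j$), I build a wire module from $f_w$ copies of the primitive switch $\bar{s}$ in initial state $a^{\{0\}}_0$. I chain their set-ports by wiring the $o$-output of each to the set-input of the next, exposing a single ``set'' input at the head of the chain. A token entering the set-input then flips every switch in the chain in turn, so after one use the module encodes the value $1$, and otherwise it encodes $0$. The $f_w$ query-ports are exposed individually; by construction each is invoked at most once during the lifetime of the network, and returns the currently stored bit.

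Now I construct the three phases. Input terminal $i_j(1)$ is wired through the set-chain of input wire $j$'s module and then to $o_j$, while $i_j(0)$ bypasses the set-chain and goes directly to $o_j$; the two branches enter a common $\bar{m}$ whose output is the exposed terminal $o_j$. The $\#$-transition threads a token through $n$ \emph{gate modules} in a topological order of $C$. For an AND gate with inputs $a_1,\ldots,a_r$ and output wire $c$, the module queries a preallocated slot of $a_1$; if the result is $0$ it short-circuits to the gate's exit, and otherwise it queries a preallocated slot of $a_2$, and so on; if all $r$ queries yield $1$, the token is routed through the set-chain of $c$'s module and then to the exit. A disposable $(r{+}1)$-merge (itself $O(r)$ primitive components, as described before the lemma) joins the $r$ short-circuit paths and the ``all-ones'' path into the single exit that feeds the next gate. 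A NOT gate is the same construction with $r=1$ and the two query outputs swapped. After the last gate the token exits at $\$$. Each $i^j$-transition uses the designated output-query slot of output wire $j$'s module, emerging at $o^j(0)$ or $o^j(1)$ carrying exactly $v_j = C(w)_j$.

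For the complexity count, $\sum_w f_w$ equals the total fanin of $C$ plus $k$, which is $O(n^2)$; this accounts for $O(n^2)$ primitive switches, and the merges inside the gate modules contribute a further $O(n^2)$ primitive merges. Before padding, the $\#$-transition has weight $O(n^2)$, each $i_j$-transition has weight $O(n)$, and each $i^j$-transition has weight $O(1)$. I fix the target constant weight $T = \Theta(n^2)$, large enough to dominate every raw transition weight, and pad each of the $2k+1$ transition paths by inserting enough trivial merges $\bar{t}$ (each of weight $1$ and inert on the traversing token) to raise its total weight to exactly $T$. This adds $O(kT) = O(kn^2)$ trivial merges, yielding total network size $O(kn^2)$ and a common weight $T = O(n^2)\subseteq O(kn^2)$. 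The only delicate bookkeeping is the advance allocation of each wire module's $f_w$ query-slots to specific queries so that no two parts of the network ever contend for the same slot; this is automatic because the topological processing of $C$ fixes in advance the order in which each wire's slots are consumed.
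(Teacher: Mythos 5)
Your overall architecture (one bank of $f_w$ disposable switches per wire to handle fanout, short-circuit evaluation of unbounded-fanin AND gates, separate loading/evaluation/readout phases) is a legitimate alternative to the paper's route, which instead first rewrites $C$ with fanin-$2$ gates and explicit splitters so that one switch per wire suffices. However, there is a genuine gap in how you establish the \emph{constant-weight} property, which is part of the claim: the fifth component of $A_C$ being the constant $T$ means every defined transition, from every reachable state, has weight exactly $T$, and the weight of a transition in a normed network is determined by the path of primitive components the token actually traverses. In your gate modules this path length is data-dependent: with short-circuit evaluation the token traverses $i$ query switches if the $i$th input of an AND gate is the first $0$, versus all $r$ queries \emph{plus} the length-$f_c$ set-chain of the output wire when the gate evaluates to $1$ (and the set-chain is skipped entirely when it evaluates to $0$). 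Hence the $\#$-transition has different weights for different inputs $w$, and padding ``each of the $2k+1$ transition paths'' with trivial merges cannot repair this, since per-terminal padding adds the same delay to every branch; the balancing must happen \emph{inside} each gate module so that all branches from the module's entry to its exit traverse equally many primitive components (this is exactly the step the paper performs explicitly). Incidentally, the count should be $3k+1$ terminals, since the $i^j$-transitions also need padding.

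Moreover, the obvious repair threatens your size bound: padding each of the $r+1$ branches of an unbounded-fanin AND gate separately costs up to $O(r+f_c)$ trivial merges per branch, i.e.\ $O(r^2 + r f_c)$ per gate, and $\sum_G r_G^2$ can be $\Theta(n^3)$, exceeding $O(k n^2)$ when $k$ is small. This is fixable --- e.g.\ route all branches of a gate into a shared delay ladder of $2$-merges with staggered entry points, costing only $O(r+f_c)$ components per gate, or simply reduce to fanin-$2$ gates first as the paper does at the cost of a quadratic blowup --- but as written your proposal neither notices the data-dependence of the $\#$-transition's weight nor supplies a balancing mechanism compatible with the claimed $O(k n^2)$ size, so the constant-weight assertion of the lemma is not established.
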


\begin{proof}
First, we replace each AND-gate of $C$ with fanin greater than $2$ by a series of binary AND-gates, and each gate with fanout greater than $1$ by a gate of the same type with fanout $1$ and a collection of \emph{splitters}, which are gates with fanin $1$ and fanout $2$ that copy their input bit into the two output wires.
This causes an at most quadratic blowup in the size of $C$.

In the construction, each wire $W$ of $C$ corresponds to a switch $\bar{s}_W$ in $A_C$, whose state is initially $a_0$.
Denote the input wires of $C$ by $W^0, \ldots, W^{k-1}$ and the output wires by $W_0, \ldots, W_{k-1}$.
For each $j \in [0, k-1]$, the input terminal $i_j(1)$ of $A_C$ is connected to the input terminal $i$ of $\bar{s}_{W^j}$, the output terminal $o$ of $\bar{s}_{W^j}$ is connected to a merge $\bar{m}_{W^j}$.
The terminal $i_j(0)$ is connected to the other input of $\bar{m}_{W^j}$ via a trivial merge, and the output terminal of $\bar{m}_{W^j}$ is connected to $o_j$.
In this way, if terminal $i_j(x)$ receives a token before $i_j(1-x)$, then the switch $\bar{s}_{W^j}$ is set to state $a_x$, and the token travels through the merge $\bar{m}_{W^j}$ and exits via $o_j$.
In particular, $\delta(q_0, i_0(w_0) \cdots i_{k-1}(w_{k-1})) = (p', o_0 \cdots o_{k-1})$ for a state $p'$ where each $\bar{s}_{W^j}$ has internal state $b_{w_j}$ and otherwise agrees with $q_0$.

Let $G$ be a gate of $C$.
Suppose first that $G$ is a NOT-gate that connects a wire $W$ to another wire $V$.
We connect terminal $o'(0)$ of $\bar{s}_W$ to terminal $i$ of $\bar{s}_V$, and connect terminal $o$ of $\bar{s}_V$ to an input terminal of a merge $\bar{m}_G$.
We also connect terminal $o'(1)$ of $\bar{s}_W$ to the other input terminal of $\bar{m}_G$.
Suppose then that $G$ is a splitter that connects a wire $W$ to two wires $V$ and $U$.
We connect terminal $o'(1)$ of $\bar{s}_W$ to terminal $i$ of $\bar{s}_V$, connect terminal $o$ of $\bar{s}_V$ to terminal $i$ of $\bar{s}_U$, and terminal $o$ of $\bar{s}_U$ into an input terminal of a merge $\bar{m}_G$.
Terminal $o'(0)$ of $\bar{s}_W$ is connected directly to the other input terminal of $\bar{m}_G$.
Finally, suppose that $G$ is an AND-gate that connects two wires $W$ and $V$ into a wire $U$.
We connect terminal $o'(0)$ of $\bar{s}_W$ and terminal $o'(0)$ of $\bar{s}_V$ to two input terminals of a $3$-merge $\bar{m}_G$, connect terminal $o'(1)$ of $\bar{s}_W$ to terminal $i'$ of $\bar{s}_V$, connect terminal $o'(1)$ of $\bar{s}_V$ to terminal $i$ of $\bar{s}_U$, and connect terminal $o$ of $\bar{s}_U$ into the third input terminal of $\bar{m}_G$.
In each case, the input terminal $i'$ of $\bar{s}_W$ is not connected to anything, and we introduced a merge $\bar{m}_G$ whose output terminal is likewise not yet connected to anything.
We denote these terminals by $i_G$ and $o_G$, respectively.
Finally, we add enough trivial merges to the gadgets that all paths from $i_G$ to $o_G$ pass through an equal number of primitive components.
See Figure~\ref{fig:s-automata-gates} for a visualization of this construction.

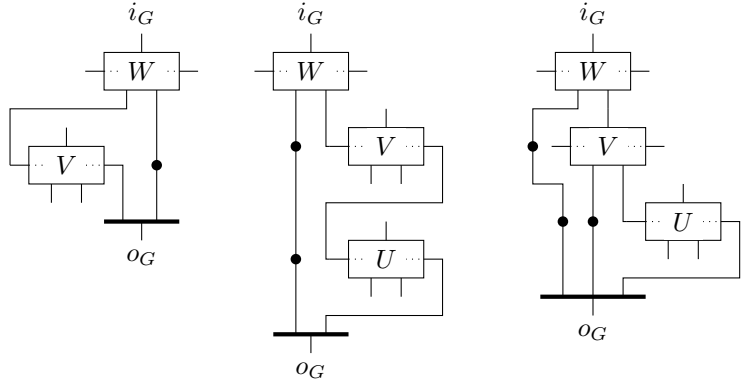
\begin{figure}[ht]
\begin{center}
\begin{tikzpicture}

\begin{scope}
\draw (0,0.25) rectangle (1,0.75);
\draw (0.5,0.75) -- (0.5,1);
\draw (0.3,0.25) -- (0.3,0) -- (-1.25,0) -- (-1.25,-0.75);
\draw (0.7,0.25) -- (0.7,-1.5);
\draw (-0.25,0.5) -- (0,0.5);
\draw [dotted] (0,0.5) -- (1,0.5);
\draw (1,0.5) -- (1.25,0.5);
\fill (0.7,-0.75) circle (0.07);

\node [fill=white] at (0.5,0.5) {$W$};
\node [above] at (0.5,1) {$i_G$};
\end{scope}

\begin{scope}[xshift=-1cm,yshift=-1.25cm]
\draw (0,0.25) rectangle (1,0.75);
\draw (0.5,0.75) -- (0.5,1);
\draw (0.3,0.25) -- (0.3,0);
\draw (0.7,0.25) -- (0.7,0);
\draw (-0.25,0.5) -- (0,0.5);
\draw [dotted] (0,0.5) -- (1,0.5);
\draw (1,0.5) -- (1.25,0.5) -- (1.25,-0.25);

\node [fill=white] at (0.5,0.5) {$V$};

\draw [ultra thick] (1,-0.25) -- (2,-0.25);
\draw (1.5,-0.25) -- (1.5,-0.5);

\node [below] at (1.5,-0.5) {$o_G$};
\end{scope}

\begin{scope}[xshift=2.25cm]
\draw (0,0.25) rectangle (1,0.75);
\draw (0.5,0.75) -- (0.5,1);
\draw (0.3,0.25) -- (0.3,-3);
\draw (0.7,0.25) -- (0.7,-0.5) -- (0.75,-0.5);
\draw (-0.25,0.5) -- (0,0.5);
\draw [dotted] (0,0.5) -- (1,0.5);
\draw (1,0.5) -- (1.25,0.5);
\fill (0.3,-0.5) circle (0.07);
\fill (0.3,-2) circle (0.07);

\node [fill=white] at (0.5,0.5) {$W$};
\node [above] at (0.5,1) {$i_G$};
\end{scope}

\begin{scope}[xshift=3.25cm,yshift=-1cm]
\draw (0,0.25) rectangle (1,0.75);
\draw (0.5,0.75) -- (0.5,1);
\draw (0.3,0.25) -- (0.3,0);
\draw (0.7,0.25) -- (0.7,0);
\draw (-0.25,0.5) -- (0,0.5);
\draw [dotted] (0,0.5) -- (1,0.5);
\draw (1,0.5) -- (1.25,0.5) -- (1.25,-0.25) -- (-0.3,-0.25) -- (-0.3,-1) -- (-0.25,-1);

\node [fill=white] at (0.5,0.5) {$V$};
\end{scope}

\begin{scope}[xshift=3.25cm,yshift=-2.5cm]
\draw (0,0.25) rectangle (1,0.75);
\draw (0.5,0.75) -- (0.5,1);
\draw (0.3,0.25) -- (0.3,0);
\draw (0.7,0.25) -- (0.7,0);
\draw (-0.25,0.5) -- (0,0.5);
\draw [dotted] (0,0.5) -- (1,0.5);
\draw (1,0.5) -- (1.25,0.5) -- (1.25,-0.25) -- (-0.3,-0.25) -- (-0.3,-0.5);

\node [fill=white] at (0.5,0.5) {$U$};

\draw [ultra thick] (-1,-0.5) -- (0,-0.5);
\draw (-0.5,-0.5) -- (-0.5,-0.75);

\node [below] at (-0.5,-0.75) {$o_G$};
\end{scope}

\begin{scope}[xshift=6cm]
\draw (0,0.25) rectangle (1,0.75);
\draw (0.5,0.75) -- (0.5,1);
\draw (0.3,0.25) -- (0.3,0) -- (-0.3,0) -- (-0.3,-1) -- (0.1,-1) -- (0.1,-2.5);
\draw (0.7,0.25) -- (0.7,0);
\draw (-0.25,0.5) -- (0,0.5);
\draw [dotted] (0,0.5) -- (1,0.5);
\draw (1,0.5) -- (1.25,0.5);
\fill (-0.3,-0.5) circle (0.07);
\fill (0.1,-1.5) circle (0.07);
\fill (0.5,-1.5) circle (0.07);

\node [fill=white] at (0.5,0.5) {$W$};
\node [above] at (0.5,1) {$i_G$};
\end{scope}

\begin{scope}[xshift=6.2cm,yshift=-1cm]
\draw (0,0.25) rectangle (1,0.75);
\draw (0.5,0.75) -- (0.5,1);
\draw (0.3,0.25) -- (0.3,-1.5);
\draw (0.7,0.25) -- (0.7,-0.5) -- (0.75,-0.5);
\draw (-0.25,0.5) -- (0,0.5);
\draw [dotted] (0,0.5) -- (1,0.5);
\draw (1,0.5) -- (1.25,0.5);

\node [fill=white] at (0.5,0.5) {$V$};
\end{scope}

\begin{scope}[xshift=7.2cm,yshift=-2cm]
\draw (0,0.25) rectangle (1,0.75);
\draw (0.5,0.75) -- (0.5,1);
\draw (0.3,0.25) -- (0.3,0);
\draw (0.7,0.25) -- (0.7,0);
\draw (-0.25,0.5) -- (0,0.5);
\draw [dotted] (0,0.5) -- (1,0.5);
\draw (1,0.5) -- (1.25,0.5) -- (1.25,-0.25) -- (-0.3,-0.25) -- (-0.3,-0.5);

\node [fill=white] at (0.5,0.5) {$U$};

\draw [ultra thick] (-1.4,-0.5) -- (0,-0.5);
\draw (-0.7,-0.5) -- (-0.7,-0.75);

\node [below] at (-0.7,-0.75) {$o_G$};
\end{scope}

\end{tikzpicture}
\end{center}
\caption{Implementations of the NOT-gate (left), splitter (middle) and AND-gate (right) as normed networks over $\bar{t}$, $\bar{m}$ and $\bar{s}$.}
\label{fig:s-automata-gates}
\end{figure}

Next, enumerate the gates $G_1, \ldots, G_n$ of $C$ in some order that is consistent with their evaluation (i.e perform a topological sort of the circuit as a directed acyclic graph).
Connect the input terminal $\#$ of $A_C$ to $i_{G_1}$.
For each $j = 1, \ldots, n-1$, connect $o_{G_j}$ to $i_{G_{i+1}}$, and finally, connect $o_{G_n}$ to the output terminal $\$$ of $A_C$.
When we now add a token to the terminal $\#$ in the state $p'$ where the values of the input wires have been determined, it travels to the terminal $i_{G_1}$, sets the values of the switches corresponding to the output wires of $G$, and exits through $o_{G_1}$.
Then it proceeds to $i_{G_2}$, repeating this for each gate before exiting through $o_{G_{k-1}}$ and then $\$$.
By the construction of the gates, each switch $\bar{s}_W$ in $A_C$ is correctly assigned to the state $a_x$, where $x$ is the value of the wire $W$ in $C$, until it evolves to state $b_x$ as the gate that it leads to is evaluated.
Furthermore, the number of primitive components that the token passes through does not depend on the states of the switches.
In the end, the switches $\bar{s}_{W_j}$ corresponding to the output wires $W_j$ hold the states $a_{C(w)_j}$.

Finally, we connect each terminal $i^j$ of $A_C$ to terminal $i'$ of $\bar{s}_{W^j}$, and for each $x \in \{0, 1\}$, connect terminal $o'(x)$ of $\bar{s}_{W_j}$ to terminal $o^j(x)$ of $A_C$.
We also add a chain of $O(n)$ new trivial merges right after each input terminal of the network other than $\#$ in order to balance its weight.
It is now easy to see that the claim holds.
\end{proof}

\section{Simulation of Normed Networks}

In this section, we show how normed networks can be simulated by the Turing machine $M$.
We use such simulations in the proof of physical universality of $M$, and this property is very time-sensitive, in the sense that we must be able to precisely control the number of steps that $M$ takes to construct the output pattern.
Recall that $B_{m,n}$ is the outer border of $[0, m-1] \times [0, n-1]$.

\begin{definition}
\label{def:AutomataByM}
Let $A = (S, I, O, \delta, \mu)$ be an s-automaton and $s_0 \in S$ its state, let $m, n > 0$, let $F : \N \to \N$ be a function and denote $D = [0, m-1] \times [0, n-1]$.
An \emph{$(m,n,F)$-simulation} of $A$ by $M$ from $s_0$ is a pair $(P, d)$ with $P \in \Sigma^D$ a pattern and $d : I \cup O \to B_{m,n}$ an injective function with the following properties.
\begin{itemize}
\item $d(e)$ is on the leftmost column of $B_{m,n}$ for $e \in I$, and on the rightmost column for $e \in O$.
\item For each $R \in \Sigma^D$ and $e \in I \cup O$, let $x(R, e) = (\stE[d(e)], R \sqcup 0^{\Z^2 \setminus D})$.
Then for each word $i = i_0 \cdots i_{k-1} \in I^*$ such that $\delta(s, i) = (t, o)$ is defined, we have a sequence of patterns $P = P_0, P_1, \ldots P_k$ in $\Sigma^D$ with $\rho^M_D(x(P_j, i_j)) = x(P_{j+1}, o_j)$ for each $j < k$ and $\sum_{j = 0}^{k-1} \tau^M_D(x(P_j, i_j)) = F(\mu(s_0, i))$.
\end{itemize}
If $F$ is multiplication by a constant $c$, as in $F(n) = c n$, we may replace it by $c$.
\end{definition}

The intuition behind Definition~\ref{def:AutomataByM} is that we have a single pattern corresponding to a state $s_0$ of $A$, and fixed positions around these patterns that correspond to input and output terminals.
When the head of $M$ enters the domain of the pattern through an input position, then it will eventually leave the domain through an exit position and manipulate the pattern into one that corresponds to another state, in a way that is consistent with the transition function $\delta$.
The correspondence between patterns and states need not be exact, as long as the systems are behaviorally equivalent.
Furthermore, the number of steps required to simulate a transition of $A$ is determined by its weight given by $\mu$, through the function $F$.
In our construction, $F$ will be a polynomial.

\begin{lemma}
\label{lem:AutomataByM}
There exist simulations by $M$ with equal weight function $n \mapsto c n$ of the trivial merge $\bar{t}$, the disposable merge $\bar m$, and the disposable switch $\bar s$ from states $a$, $a$ and $a_0$ respectively.
In these simulations, no two terminals have adjacent positions on the border of the domain.
\end{lemma}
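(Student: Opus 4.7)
The plan is to construct each primitive component as an explicit pattern-plus-terminal-layout on a rectangle, and then equalise all step counts by padding. For every component I would take $D = [0,m-1] \times [0,n-1]$ with input terminals placed on the column $x = -1$ and output terminals on the column $x = m$, and design the interior pattern so that a head entering eastward at any input traverses $D$ deterministically under the dynamics of $M$, rearranges the $1$s consistently with the s-automaton transition, and leaves at the prescribed output. The trivial merge $\bar t$ is realised by an all-$0$ corridor, in which no rotation of~\eqref{eq:Transes} ever applies and the head walks straight across. The disposable merge $\bar m$ uses two vertically separated input positions on the left and small L-shaped bundles of $1$s that bend each incoming head into a common $0$-corridor leading to the single output on the right; since $\bar m$ is disposable, only one input terminal is ever used, so the destructive alteration of the pattern during a passage is harmless.

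The disposable switch $\bar s$ is the main design problem. I would encode the memory bit by the position of a single distinguished $1$ inside $D$. The corridor for input $i$ routes the head through a rotation of~\eqref{eq:Transes} that displaces this $1$ from its ``$0$-slot'' to its ``$1$-slot'' (thereby setting the bit) and afterwards channels the head in a straight line to the output $o$. The corridor for input $i'$ approaches the memory cell from a different angle, with a small fixed template of $1$s arranged so that the next application of~\eqref{eq:Transes} forces the head into a left turn when the memory bit is $0$ and into a right turn when it is $1$; the two resulting trajectories are then guided by straight $0$-corridors to $o'(0)$ and $o'(1)$ respectively. Lemma~\ref{lem:NoEscape} and the remark immediately following its proof guarantee that once the head leaves the small island of $1$s around the memory cell it cannot spontaneously turn back and corrupt the routing, so the two read-out branches stay separated until they exit $D$.

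To finish I would equalise the step counts in two stages. First, within each gadget I pad the straight segments of each admissible transition so that all of them — the single $\bar t$-transition, the two $\bar m$-transitions and the three $\bar s$-transitions — are simulated by the same number of $M$-steps inside their own gadget. Second, I globally enlarge each gadget's dimensions by attaching blank $0$-corridors until a common per-transition count $c$ is achieved across all three components, producing a common weight function $n \mapsto cn$. Non-adjacency of terminals is enforced by inserting spacing rows and columns of $0$s between consecutive terminal positions on the border. The principal obstacle is the switch: I must verify that the memory bit is read non-destructively at the first application of $i'$, that the $i$- and $i'$-branches can all be matched in length using only straight-corridor padding, and that the local $1$-template near the memory cell actually admits the two desired turn directions under a rotation of~\eqref{eq:Transes}. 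Once a suitable local template is identified, the remainder becomes a finite combinatorial check rather than a structural difficulty.
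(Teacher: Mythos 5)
Your strategy is the same as the paper's: realize each primitive component as an explicit pattern with inputs on the west side of the border and outputs on the east side, and pad with all-$0$ columns to make the weight constant $c$ common to all three components (the paper exhibits a single $0$-cell for $\bar t$, a $(6,8,11)$-simulation of $\bar m$ and an $(8,13,22)$-simulation of $\bar s$ in Figure~\ref{fig:MSimulatesAutomata}, and then pads). The genuine gap is that you never actually produce the switch gadget, and the lemma has no content beyond the existence of these concrete patterns. Your design principle for $\bar s$ -- a memory $1$ whose position triggers a left turn under \eqref{eq:Transes} when the bit is $0$ and a right turn when it is $1$ -- is plausible and close in spirit to the paper's figure, but the sentence ``once a suitable local template is identified, the remainder becomes a finite combinatorial check'' defers exactly the existence claim that constitutes the proof; the paper's argument is precisely the exhibition of such a template together with its verification.

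The timing requirement is also more rigid than your two-stage padding scheme acknowledges. Each primitive component has constant weight $1$, and Definition~\ref{def:AutomataByM} must hold for every admissible input word, hence for every prefix; so for the switch the write transition $i \mapsto o$ and the two read transitions $i' \mapsto o'(0)$ and $i' \mapsto o'(1)$ (the latter occurring after a write) must each take exactly $c$ steps. Appending blank $0$-columns adds the same number of steps to every transition of a gadget, so it can only align the constants of different gadgets; within one gadget the equality has to be built into the pattern, and lengthening the ``straight segment'' of one branch inside the shared rectangle relocates terminals and can collide with the other branches. Moreover, for $M$ a northward step costs two time steps (the $\stN/\stN^*$ counter) while the other directions cost one, so two branches exiting at different rows are not equally long just because they traverse equally many cells; this asymmetry is one reason why a length-counting argument does not suffice and an explicit, checked pattern is needed. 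Finally, your appeal to Lemma~\ref{lem:NoEscape} to keep the read-out branches from turning back is not quite the right tool: that lemma constrains where $1$s can travel in a configuration whose support lies in a single rectangle, whereas what you need is simply that the cells along the remaining route to the border carry no $1$s that could trigger \eqref{eq:Transes} -- again a property of the concrete pattern you have not specified.
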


\begin{proof}
See Figure~\ref{fig:MSimulatesAutomata} for a $(6,8,11)$-simulation of $\bar m$, and an $(8,13,22)$-simulation of $\bar s$.
The figures depict states that correspond to $a$ and $a_0$, respectively, and the other states are obtained by inserting the head at one of the positions corresponding to the input terminals and applying the escape map $\rho^M_D$ until no more inputs can be processed.
It is easy to verify that these processes exactly simulate the behavior of $\bar{m}$ and $\bar{s}$ on valid input sequences.

The trivial merge can be simulated by a single cell holding the symbol $0$.
Finally, we can pad the simulations with extra columns of $0$-cells to guarantee that the weight factor $c$ is the same in all of them.
\end{proof}

\begin{figure}[ht]
  \begin{center}
    \begin{tikzpicture}
      
      \draw [step=1/2] (0,0) grid (3,4);
      \draw [densely dotted,>->] (-0.25,0.75) -| ++(0.5,1) -- ++(3,0);
      \draw [densely dotted,>-] (-0.25,3.75) -| ++(2.5,-2); 
      \node at (0.75,0.25) {$1$};
      \node at (0.25,1.75) {$1$};
      \node at (1.75,1.25) {$1$};
      \node at (1.75,1.75) {$1$};
      \node at (1.75,2.25) {$1$};
      \node at (2.25,1.25) {$1$};
      \node at (2.25,2.25) {$1$};
      \node at (2.25,3.75) {$1$};
      \node [left] at (-0.25,0.75) {$i_0$};
      \node [left] at (-0.25,3.75) {$i_1$};
      \node [right] at (3.25,1.75) {$o$};
      
      \begin{scope}[xshift=5cm]
        \draw [step=1/2] (0,0) grid (4,6.5);
        \draw [densely dotted,>->] (-0.25,2.25) -| ++(1.5,0.5) -| ++(-1,2) -- ++(4,0);
        \draw [densely dotted,>->] (-0.25,5.75) -| ++(2.5,-2) -| ++(1,2.5) -- ++(1,0);
        \draw [densely dotted,->] (2.25,3.75) |- ++(1,-3) |- ++(1,1);
        \foreach \x/\y in {0/2.5,0/4.5,1.5/0,1.5/1.5,1.5/3,2/5.5,3/1.5,3/6,3.5/0,3.5/3}{
          \node at (\x+0.25,\y+0.25) {$1$};
        }
        \node [left] at (-0.25,2.25) {$i$};
        \node [left] at (-0.25,5.75) {$i'$};
        \node [right] at (4.25,1.75) {$o'(1)$};
        \node [right] at (4.25,4.75) {$o$};
        \node [right] at (4.25,6.25) {$o'(0)$};
      \end{scope}
      
    \end{tikzpicture}
    \caption{Simulations of $\bar{m}$ (left) and $\bar{s}$ (right) with the paths of the head of $M$ highlighted.}
    \label{fig:MSimulatesAutomata}
  \end{center}
\end{figure}

Our goal is to simulate any constant-weight normed network over these components.

\begin{lemma}
\label{lem:TMSimulatesNetwork}
Let $N = (S, I, O, \delta, \mu)$ be a normed network of size $n$ over $\bar{t}$, $\bar{m}$ and $\bar{s}$, and let $s_0 \in S$ be the state where the components are all in states $a$ or $a_0$.
Then there exist numbers $m = O(n^3)$ and $k = O(n^2)$, a function $F(i) = c i + d$ with $c, d = O(n^2)$, and an $(m,k,F)$-simulation of $N$ by $M$ from $s_0$.
\end{lemma}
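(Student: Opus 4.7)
The plan is to arrange the $n$ primitive component simulations from Lemma~\ref{lem:AutomataByM} in a vertical stack along the left side of the domain $D = [0, m-1] \times [0, k-1]$, and to connect their input and output terminals by \emph{wire gadgets} routed through the remainder of $D$. First I would pad each primitive simulation so that they share a common width, common terminal positions on their left and right columns, and the same weight factor $c$ (which is already guaranteed by Lemma~\ref{lem:AutomataByM}). Stacking $n$ of these padded copies vertically, separated by constant-width rows of $0$s, gives a block of constant width and height $O(n)$ on the left edge of $D$.

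Next I would design wire gadgets that realize the feedback and product operations defining $N$. A wire is a corridor of $0$-cells with isolated $1$s placed at turn points; on such a pattern, $M$ behaves very predictably, moving in a straight line through $0$-regions and turning only where the local rule~\eqref{eq:Transes} forces it. By Lemma~\ref{lem:NoEscape} (applied to the small rectangular region enclosing the relevant $1$s at a turn) and the fact that the head leaves a rectangular $1$-island only via a straight line, disjoint wires cannot interfere with one another, and each wire is a single-use object well-suited to our ``disposable'' components. For each feedback pairing $(o,i)$ in $N$, I would route a wire from the position of $o$ on the right column of one component to the position of $i$ on the left column of another, possibly wrapping around vertically via U-turn gadgets. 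Network inputs and outputs remain on the left and right borders of $D$, as required by Definition~\ref{def:AutomataByM}.

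For the timing, each primitive simulation contributes $cn$ steps per transition and each wire of length $\ell$ contributes $\Theta(\ell)$ steps; by Lemma~\ref{lem:NetworkWeight}, every transition of $N$ traverses at most $2n$ components, so the raw step count is polynomial in $n$ and linear in the input word length. To make the weight function \emph{exactly} of the form $F(i) = ci + d$, I would equalize path lengths by inserting additional trivial-merge simulations (straight $0$-corridors that delay the head without logical effect) along whichever wires are shorter than their neighbors. Arranging $O(n)$ wire channels in parallel, each of length at most $O(n^2)$ and padded to a common length, yields the claimed dimensions $m = O(n^3)$, $k = O(n^2)$ and constants $c, d = O(n^2)$.

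The main obstacle will be the careful design of the wire gadgets themselves, and in particular the turn and U-turn pieces, given that $M$'s local rule is asymmetric (by design, to break the parity issue afflicting $\bar M$) and that turns alter the $1$-pattern of the wire. One needs to verify by case analysis that every wire fragment accepts the head from a designated entry position in a designated state, leaves the rest of the configuration untouched, and delivers the head to the designated exit position in a controlled state; once such a toolkit is in place the weight-balancing argument and the size bookkeeping are routine.
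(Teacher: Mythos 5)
Your proposal follows essentially the same route as the paper's proof: stack the padded primitive-component simulations in a column, connect them by single-use wires realized as $0$-corridors with isolated $1$s at the turns, and pad every wire to a common traversal time so that the total simulation time becomes an affine function of the transition weight, with the same polynomial size bookkeeping. The only substantive difference is in the deferred detail: where you equalize timing with straight-corridor detours built from trivial merges, the paper inserts two explicit delay gadgets of $9$ and $11$ steps (Figure~\ref{fig:Delays}) to realize every sufficiently large delay despite the timing asymmetry of northward moves, which is exactly the kind of verification your case-analysis toolkit would have to supply.
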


\begin{proof}
We construct the simulation by placing the primitive components on a vertical column with $O(n)$ empty cells between them.
We then construct the wires so that the head of $M$ traverses each of them in an equal number of steps.

In this construction, a wire is represented by an empty region of $0$-symbols, and occasional $1$-symbols that cause the head of $M$ to turn right or left.
More explicitly, if $M$ encounters a single $1$-symbol directly on its path, then it will make a right turn as it moves on that symbol.
If $M$ encounters a $1$-symbol on the right hand side of its path, then it will make a left turn when that symbol can be moved onto the cell occupied by the head.
Since the $1$-symbols are moved when the head traverses a wire, it cannot be safely traversed again, but this is not an issue since the primitive components of Definition~\ref{def:PrimComponents}, and thus all normed networks over them, have the property that each input and output terminal can be used at most once.
A crossing of two wires is trivial to implement.

We now introduce a way for adding arbitrary delays in horizontal wires.
Figure~\ref{fig:Delays} shows two gadgets, one for delaying the head of $M$ for $9$ steps and one for $11$ steps.
By inserting copies of these gadgets into a wire of length $n$, we can implement any delay which is $O(n)$ and larger than a fixed constant.

\begin{figure}[ht]
  \begin{center}
    \begin{tikzpicture}
      
      \draw [step=1/2] (0,0) grid (3,2.5);
      \draw [densely dotted,>->] (-0.25,0.75) -| ++(1,1.5) -| ++(1.5,-1.5) -- ++(1,0);
      \node at (0.75,2.25) {$1$};
      \node at (1.25,0.25) {$1$};
      \node at (1.75,0.25) {$1$};
      \node at (2.25,2.25) {$1$};
      
      \begin{scope}[xshift=5cm]
        \draw [step=1/2] (0,0) grid (4,3);
        \draw [densely dotted,>->] (-0.25,1.75) -| ++(2,0.5) -| ++(-0.5,-1.5) -| ++(1.5,1) -- ++(1.5,0);
        \foreach \x/\y in {0.5/0,0.5/2.5,2/1,2/2.5,2.5/1.5,3/0}{
          \node at (\x+0.25,\y+0.25) {$1$};
        }
      \end{scope}
      
    \end{tikzpicture}
    \caption{Gadgets that implement a delay of $9$ steps (left) and $11$ steps (right) in a west-to-east wire. Note that traversing one step to the north takes two time steps.}
    \label{fig:Delays}
  \end{center}
\end{figure}

Label the components of $N$ as $A_1, \ldots, A_n$ and the wires as $W_1, \ldots, W_\ell$.
We choose $h = O(n)$, $k = h (n + 1)$ and $m = h (\ell + 1)$, and place a pattern $P_i$ that simulates $A_i$ in an all-$0$ configuration so that its southwest corner is at $(m - C, h i)$ for a constant $C$.
For each wire $W_i$ starting from an output terminal of some $P_j$, we extend a simulated wire to the east and make two left turns that redirect it to the west so that it passes above $P_j$.
This simulated wire is extended to the x-coordinate $h i$.
Within the rectangle spanned by the coordinates $(h i,0)$ and $(h(i+1)-1, k-1)$, we have enough space to implement an arbitrary delay of $O(n^2)$ steps using turns and the two delay components of Figure~\ref{fig:Delays}, so that each wire is traversed in exactly $e = O(n^2)$ steps.
The simulated wire is then extended east, either to an input terminal of some other $P_{j'}$ or the east border of the simulation region.
If $W_i$ originates from an input terminal of $N$, we use a similar construction, except that the simulated wire originates from the west border of the simulation region.
See Figure~\ref{fig:RowOfComponents} for a diagrammatic representation of the construction.

It is clear from the construction that the resulting $m \times k$-pattern $P$ realizes a simulation of $N$ by $M$ from $s_0$.
It remains to be shown that it is an $(m,k,F)$-simulation with $F(x) = e(x+1) + c x$, with $e$ being the number of steps in which $M$ traverses any wire in the simulation and $c$ the common weight factor of the simulated primitive components given by Lemma~\ref{lem:AutomataByM}.
But for any $i \in I^*$, the weight $\mu(s_0, i)$ is exactly the number of primitive components of $N$ that the token travels through in the computation of $\delta(s_0, i)$, or equivalently one less than the number of wires it travels through.
Hence the number of steps taken by $M$ in the simulation of $\delta(s_0, i)$ is $e(\mu(s_0,i)+1) + c \mu(s_0,i)$, as claimed.
\end{proof}

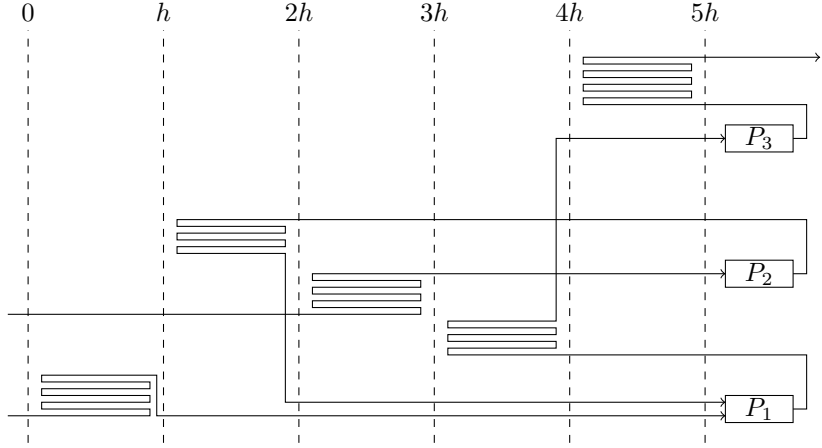
\begin{figure}[ht]
\begin{center}
\begin{tikzpicture}[scale=0.9]

  \foreach \y in {0,2,4}{
    \draw (10.3,\y) rectangle ++(1,0.4);
  }
  \foreach \y in {0,2,4,6,8,10}{
    \draw [dashed] (\y,-0.3) -- (\y,5.8);
  }

  \draw [->] (11.3,2.2) -| (11.5,3) -- (2.2,3) |- (3.8,2.9) |- (2.2,2.8) |- (3.8,2.7) |- (2.2,2.6) |- (3.8,2.5) |- (10.3,0.3);
  \draw [->] (-0.3,0.1) -| (1.8,0.2) -| (0.2,0.3) -| (1.8,0.4) -| (0.2,0.5) -| (1.8,0.6) -| (0.2,0.7) -| (1.9,0.1) -- (10.3,0.1);
  \draw [->] (-0.3,1.6) -- (5.8,1.6) |- (4.2,1.7) |- (5.8,1.8) |- (4.2,1.9) |- (5.8,2) |- (4.2,2.1) |- (10.3,2.2);
  \draw [->] (11.3,0.2) -| (11.5,1) -| (6.2,1.1) -| (7.8,1.2) -| (6.2,1.3) -| (7.8,1.4) -| (6.2,1.5) -| (7.8,4.2) -- (10.3,4.2);
  \draw [->] (11.3,4.2) -| (11.5,4.7) -| (8.2,4.8) -| (9.8,4.9) -| (8.2,5) -| (9.8,5.1) -| (8.2,5.2) -| (9.8,5.3) -| (8.2,5.4) -- (11.7,5.4);

  \node at (10.8,0.2) {$P_1$};
  \node at (10.8,2.2) {$P_2$};
  \node at (10.8,4.2) {$P_3$};

  \node [above] at (0,5.8) {$0$};
  \node [above] at (2,5.8) {$h$};
  \node [above] at (4,5.8) {$2h$};
  \node [above] at (6,5.8) {$3h$};
  \node [above] at (8,5.8) {$4h$};
  \node [above] at (10,5.8) {$5h$};

\end{tikzpicture}
\caption{A diagram of a simulation of a normed network by $M$.}
\label{fig:RowOfComponents}
\end{center}
\end{figure}

\section{Border Processes}
\label{sec:BorderProcesses}

In our construction, the head of $M$ repeatedly enters and exits the rectangular input region in order to extract information from it and manipulate its contents.
We now introduce tools that allow us to consider the border of a rectangle as an interface between its inside and outside and consider the two parts separately.
We first define the outside as an abstract process that satisfies certain properties, and then construct an implementation for its behavior as a concrete configuration of $M$.

A \emph{border process} of size $m \times n$ and depth $k$ is a pair $(\mathcal{P}, f)$ where $\mathcal{P} \subset \npats{[0,m-1] \times [0,n-1]}{Q}{\Sigma}$ is a set of patterns such that all $1$s and the head of $M$ are contained in $[k, m - 1 - k] \times [k, n - 1 - k]$, and $f : B_{m,n}^k \to B_{m,n}^k \times \N$ is a function such that whenever $f(\vec{b}_1, \ldots, \vec{b}_k) = (\vec{b}'_1, \ldots, \vec{b}'_k, t)$, each $\vec{b}'_i$ depends only on the prefix $\vec{b}_1, \ldots, \vec{b}_i$.
We denote $\vec{b}'_i = f[\vec{b}_1, \ldots, \vec{b}_i]$.
The border process represents the head of the Turing machine $M$ exiting and re-entering an $m \times n$ region containing a pattern from $\mathcal{P}$ exactly $k$ times, with the location of each entrance being a function of the locations of the previous exits.
The number $t$ represents the relative number of time steps required to complete the process; it plays a role in the definition of \emph{concrete realizations} later on.
We denote its maximum value by $T(f) = \max \{ t \in \N \;|\; w \in B_{m,n}^k, f(w) = (w', t) \}$.

Consider a border process $(\mathcal{P}, f)$ of size $m,n$ and depth $k$.
Take a pattern $P \in \mathcal{P}$ and let $x = x(P) = P \sqcup 0^{\Z^2 \setminus [0,m-1] \times [0,n-1]} \in X_Q \times \Sigma^{\Z^2}$.
We denote by $f(x) \in X_Q \times \Sigma^{\Z^2} \times \N$ the configuration and number obtained as follows.
By Lemma~\ref{lem:NoEscape}, the head eventually leaves the rectangle $[0, m-1] \times [0, n-1]$, and the $1$s stay contained in the region $[k-1, m-1-(k-1)] \times [k - 1, n - 1 - (k -1)]$.
Let $x_1 = \rho^M_{[0,m-1] \times [0,n-1]}(x)$ be the configuration right after the exit, let $\vec{b}_1 \in B_{m,n}$ be the location of the head, and let $\vec{b}'_1 = f[\vec{b}_1]$.
Let $y_1$ be the configuration obtained from $x_1$ by moving the head onto $\vec{b}'_1$, facing toward the region $[0, m-1] \times [0, n-1]$.
We let $y_1$ evolve until the head again leaves $[0, m-1] \times [0, n-1]$, at which point the $1$s are contained in $[k - 2, m - 1 - (k - 2)] \times [k - 2, n - 1 - (k - 2)]$.
Let $x_2 = \rho^M_{[0,m-1] \times [0,n-1]}(y_1)$ be the configuration right after the exit, let $\vec{b}_2 \in B_{m,n}$ be the position of the head and $\vec{b}'_2 = f[\vec{b}_1, \vec{b}_2]$, and let $y_2$ be the configuration obtained from $x_2$ by moving the head to $\vec{b}'_2$ facing toward $[0, m-1] \times [0, n-1]$.
In general, we keep defining configurations $y_i$ where the head faces toward the region $[0, m-1] \times [0, n-1]$ and the $1$s are contained in $[k - i, m - 1 - (k - i)] \times [k - i, n - 1 - (k - i)]$, let it evolve into a configuration $\rho^M_{[0,m-1] \times [0,n-1]}(y_i) = x_{i+1}$ where the head has just left $[0, m-1] \times [0, n-1]$ onto a coordinate $\vec{b}_{i+1} \in B_{m,n}$, define $\vec{b}'_{i+1} = f[\vec{b}_1, \ldots, \vec{b}_{i+1}]$, and construct $y_{i+1}$ from $x_{i+1}$ by moving the head to $\vec{b}'_{i+1}$ facing toward $[0, m-1] \times [0, n-1]$.
The process stops at $i = k$, and we define $f(x) = (y_k, t)$ with $f(\vec{b}_1, \ldots, \vec{b}_k) = (\vec{b}'_1, \ldots, \vec{b}'_k, t)$.

For $0 \leq i < k$, let $T_f(x, \vec{b}_1, \ldots, \vec{b}_i)$ be the number $t$ with $M^t(y_i) = x_{i+1}$, that is, the number of steps $M$ takes before leaving the region $[0,m-1] \times [0,n-1]$ for the $i$th time, counted from the last entrance.
In the case $i = 0$ we denote $y_0 = x$.
We say $f$ has \emph{consistent timing}, if for all $1 \leq i < k$ and all $P \in \mathcal{P}$ the number $T_f(x(P), \vec{b}_1, \ldots, \vec{b}_i)$ only depends on $i$, the entrance coordinate $\vec{b}'_{i-1}$ and the exit coordinate $\vec{b}_i$.
In particular, it should be independent of the choice of $P$.
Then we denote $T_f(x(P), \vec{b}_1, \ldots, \vec{b}_i) = T_f(i, \vec{b}'_{i-1}, \vec{b}_i)$.

A \emph{concrete realization} of a border process $(\mathcal{P}, f)$ of size $m,n$ and depth $k$ is a partial configuration $y \in \Sigma^{\Z^2 \setminus [0, m-1] \times [0, n-1]}$ with a `hole' of shape $[0, m-1] \times [0, n-1]$ with the following property.
Take any pattern $P \in \mathcal{P}$, and consider the configuration $x = y \sqcup P$.
Let $t_1 < s_1 < t_2 < s_2 < \cdots$ be the (potentially infinite) sequence of time steps at which the head leaves and enters the region $[0, m-1] \times [0, n-1]$ in the evolution of $x$ by $M$, and let $\vec{b}_1, \vec{b}'_1, \vec{b}_2, \vec{b}'_2, \ldots$ be the elements of $B_{m,n}$ at which this happens.
Then we have $f(\vec{b}_1, \ldots, \vec{b}_k) = (\vec{b}'_1, \ldots, \vec{b}'_k, t)$ with $s_k - t_1 = C + t$ for some constant $C$ depending only on $f$ and $y$.
The intuition is that the partial configuration $y$ implements the process of moving the head to the next coordinate of $B_{m,n}$ given by $f$ after it has left the region $[0, m-1] \times [0, n-1]$, and the number of steps from the first exit to the last entrance is also given by $f$ up to an additive constant.

An \emph{abstract realization} of a border process $(\mathcal{P}, f)$ of size $m,n$ and depth $k$ consists of a constant-weight normed network $A_f = (Q_f, I_f, O_f, \delta_f, c_f)$ over the primitive components $\bar{t}$, $\bar{m}$ and $\bar{s}$, where $I_f = \{ i_j(\vec{b}) \;|\; 1 \leq j \leq k, \vec{b} \in B_{m,n} \}$ and $O_f = \{ o_j(\vec{b}) \;|\; 1 \leq j < k, \vec{b} \in B_{m,n} \} \cup \{ o_k(\vec{b}, t) \;|\; \vec{b} \in B_{m,n}, 0 \leq t \leq T(f) \}$, and an initial state $q_0 \in Q_f$ such that for all border coordinates $\vec{b}_1, \ldots, \vec{b}_k \in B_{m,n}$ we have $\delta_f(q_0, i_1(\vec{b}_1) \cdots i_k(\vec{b}_k)) = (p, o_1(\vec{b}'_1) \cdots o_{k-1}(\vec{b}'_{k-1}) o_k(\vec{b}'_k, t))$ for some state $p \in Q_f$ with $f(\vec{b}_1, \ldots, \vec{b}_k) = (\vec{b}'_1, \ldots, \vec{b}'_k, t)$.
The \emph{abstract complexity} of $(\mathcal{P}, f)$ is the size of its smallest abstract realization.
Note that these concepts do not depend on the set $\mathcal{P}$, which justifies the notation $A_f$.
Figure~\ref{fig:AbstractRealization} depicts an abstract realization as a single s-automaton.
Each shaded vertical bar represents the head of $M$ exiting the region $[0,m-1] \times [0,n-1]$ via some $\vec{b} \in B_{m,n}$, corresponding to terminal $i_j(\vec{b})$, and re-entering via some $\vec{b}' \in B_{m,n}$ determined by $A_f$, which corresponds to terminal $o_j(\vec{b}')$ (or $o_k(\vec{b}', t)$ in the case $j = k$).

\begin{figure}[htp]
  \begin{center}
    \begin{tikzpicture}[scale=1.3]

      \draw [fill=black!15] (3.3,0) -- (0,0) -- (0,0.5) --
      ++(0,2.5) -- ++(0.5,0) -- ++(0,-2.5) -- ++(1.5,0) --
      ++(0,2.5) -- ++(0.5,0) -- ++(0,-2.5) -- ++(0.8,0);
      
      \draw [fill=black!15] (4.2,0) -- (7.5,0) -- (7.5,3) -- (7,3) -- (7,0.5) -- (5.5,0.5) --
      (5.5,3) -- (5,3) -- (5,0.5) -- (4.2,0.5);
      
      \foreach \x in {0,2,5,7}{
        \foreach \y in {0.75,1.75,2.25,2.75}{     
          \draw (\x,\y) -- (\x-0.2,\y);
          \draw (\x+0.5,\y) -- (\x+0.7,\y);
        }
        \node at (\x-0.15,1.3) {$\vdots$};
        \node at (\x+0.65,1.3) {$\vdots$};
      }
      
      \node [left] at (-0.2,0.75) {$i_1^s$};
      \node [left] at (-0.2,1.75) {$i_1^3$};
      \node [left] at (-0.2,2.25) {$i_1^2$};
      \node [left] at (-0.2,2.75) {$i_1^1$};
      \node [right] at (0.7,0.75) {$o_1^s$};
      \node [right] at (0.7,1.75) {$o_1^3$};
      \node [right] at (0.7,2.25) {$o_1^2$};
      \node [right] at (0.7,2.75) {$o_1^1$};
      
      \node [left] at (1.8,0.75) {$i_2^s$};
      \node [left] at (1.8,1.75) {$i_2^3$};
      \node [left] at (1.8,2.25) {$i_2^2$};
      \node [left] at (1.8,2.75) {$i_2^1$};
      \node [right] at (2.7,0.75) {$o_2^s$};
      \node [right] at (2.7,1.75) {$o_2^3$};
      \node [right] at (2.7,2.25) {$o_2^2$};
      \node [right] at (2.7,2.75) {$o_2^1$};
     
      \node [left] at (4.8,0.75) {$i_{k-1}^s$};
      \node [left] at (4.8,1.75) {$i_{k-1}^3$};
      \node [left] at (4.8,2.25) {$i_{k-1}^2$};
      \node [left] at (4.8,2.75) {$i_{k-1}^1$};
      \node [right] at (5.7,0.75) {$o_{k-1}^s$};
      \node [right] at (5.7,1.75) {$o_{k-1}^3$};
      \node [right] at (5.7,2.25) {$o_{k-1}^2$};
      \node [right] at (5.7,2.75) {$o_{k-1}^1$};
      
      \node [left] at (6.8,0.75) {$i_k^s$};
      \node [left] at (6.8,1.75) {$i_k^3$};
      \node [left] at (6.8,2.25) {$i_k^2$};
      \node [left] at (6.8,2.75) {$i_k^1$};
      \node [right] at (7.7,0.75) {$o_k^s(T(f))$};
      \node [right] at (7.7,1.75) {$o_k^1(2)$};
      \node [right] at (7.7,2.25) {$o_k^1(1)$};
      \node [right] at (7.7,2.75) {$o_k^1(0)$};

      \node at (3.75,0.25) {$\cdots$};
      
      \node at (2.25,0.25) {$A_f$};
      
    \end{tikzpicture}
  \end{center}
  \caption{An abstract realization of a border process, with shortened notation $i_j^p = i_j(\vec{b}_p)$, $o_j^p = o_j(\vec{b}_p)$ and $o_k^p(t) = o_k(\vec{b}_p, t)$ for some fixed enumeration $\{\vec{b}_1, \ldots, \vec{b}_s\} = B_{m,n}$.}
  \label{fig:AbstractRealization}
\end{figure}
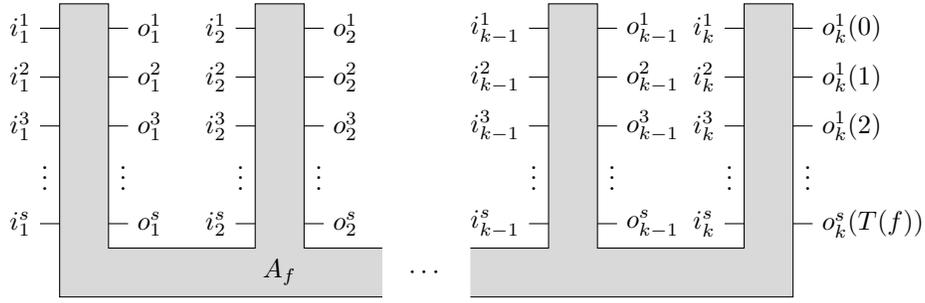

We now show that every border process $(\mathcal{P}, f)$ with consistent timing has a concrete realization whose size is polynomial in its abstract complexity.
Recall from Lemma~\ref{lem:TMSimulatesNetwork} that the machine $M$ can simulate an arbitrary normed network -- in particular an abstract realization $A_f$ of $(\mathcal{P}, f)$ -- using a gadget of polynomial size.
We construct a system of gadgets that acts as an `interface' between the rectangular region that $f$ acts on and the gadget that simulates $A_f$, guiding the head of $M$ to the correct terminals and border cells.

\begin{definition}
  Let $a, n \in \N$.
  A \emph{west $(a,n)$-catcher} is the pattern of shape $[0, 2 a + 9] \times [-a-3, n+1]$ with a $1$ at $(0, -a-2)$, $(3,-a-3)$, $(3,-a-2)$, $(3,n+1)$, $(2 a + 6, -a-3)$, $(2 a + 6, n+1)$ and $(2 a + 8, -a)$, and a $0$ in every other cell.
  The pattern is divided into three parts: the middle part consists of rows $0, 1, \ldots, n-1$, and the top and bottom parts are formed by the remaining rows.
  By rotating the pattern by $90$, $180$ and $270$ degrees, we obtain the \emph{north, east and south $(a,n)$-catchers}.
\end{definition}

The catcher is initially \emph{inactive}, and can be \emph{activated} by sending the head into the pattern from its south border on the westmost column.
The head travels on a spiraling path, pulling four $1$s closer to the center, and exits from the eastmost column of the south border.
Once the catcher is activated, we can use it to intercept the head of $M$ if it arrives from the east on row $n-a$.
See Figure~\ref{fig:Catcher} for a visualization.

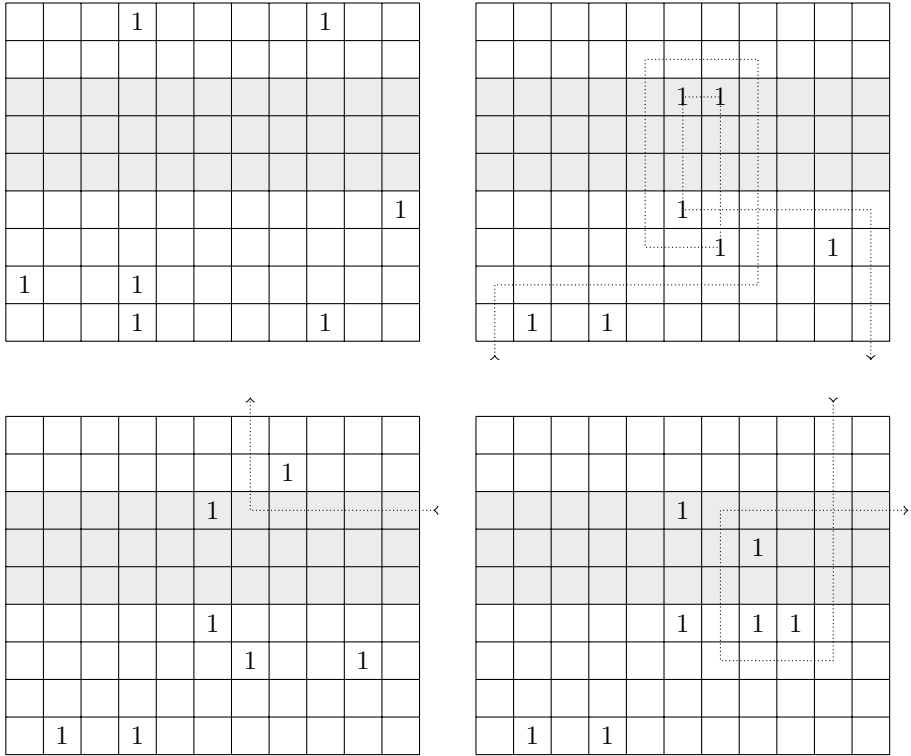
\begin{figure}[htp]
  \begin{center}
    \begin{tikzpicture}[scale=1/2]
      
      
      \fill [gray!15] (0,0) rectangle (11,3);
      
      \draw (0,-4) grid (11,5);

      \foreach \x/\y in {0/-3,3/-4,3/-3,3/4,8/-4,8/4,10/-1}{
        \node at (\x+.5,\y+.5) {$1$};
      }
      
      \begin{scope}[xshift=12.5cm]
      
      \fill [gray!15] (0,0) rectangle (11,3);
      
      \draw (0,-4) grid (11,5);

      \foreach \x/\y in {1/-4,3/-4,5/-1,5/2,6/-2,6/2,9/-2}{
        \node at (\x+.5,\y+.5) {$1$};
      }
      
      \draw [densely dotted,>->] (0.5,-4.5) |- ++(7,2) |- ++(-3,6) |- ++(2,-5) |- ++(-1,4) |- ++(5,-3) -- ++(0,-4);
      
      \end{scope}
      
      \begin{scope}[yshift=-11cm]
      
      \fill [gray!15] (0,0) rectangle (11,3);
      
      \draw (0,-4) grid (11,5);

      \foreach \x/\y in {1/-4,3/-4,5/-1,5/2,6/-2,7/3,9/-2}{
        \node at (\x+.5,\y+.5) {$1$};
      }
      
      \draw [densely dotted,>->] (11.5,2.5) -| (6.5,5.5);
      
      \end{scope}
      
      \begin{scope}[xshift=12.5cm,yshift=-11cm]
      
      \fill [gray!15] (0,0) rectangle (11,3);
      
      \draw (0,-4) grid (11,5);

      \foreach \x/\y in {1/-4,3/-4,5/-1,5/2,7/-1,7/1,8/-1}{
        \node at (\x+.5,\y+.5) {$1$};
      }
      
      \draw [densely dotted,>->] (9.5,5.5) |- ++(-3,-7) |- ++(5,4);
      
      \end{scope}

%
%
%
%
%
%
      
    \end{tikzpicture}
    \caption{Top left: an inactive west $(1,3)$-catcher. Top right: activating the catcher. Bottom left: intercepting the head on row $2$ and redirecting it to the north. Bottom right: additional processing performed during partial activation of a catcher array. The middle part of the catcher, corresponding to rows $0$, $1$ and $2$, is shaded in each figure.}
    \label{fig:Catcher}
  \end{center}
\end{figure}

\begin{definition}
  Let $n \in \N$.
  A \emph{west catcher array of width $n$} is a pattern consisting of one west $(a,n)$-catcher for each $a = 1, 2, \ldots, n$ positioned side by side in this order.
  North, east and south arrays are defined as rotated versions of these patterns.
  
  A \emph{catcher system of width $n$} consists of west, north, east and south catcher arrays of width $n$ positioned so that their middle parts align with the square $[0, n-1]^2$, and each array lies in the respective direction from this square.
\end{definition}

We say that a catcher array or catcher system is inactive or activated if each individual catcher in it has this status.
Once a west catcher array is activated, it can intercept the head of $M$ as it arrives from the east on any of the rows on its middle part.
The $(a,n)$-catcher is responsible for row $n-a$: if the head arrives on row $n - a$, it passes through the $(n,n)$-catcher, then the $(n-1,n)$-catcher, all the way up to the $(a,n)$-catcher which finally intercepts it and sends it north.
In other words, the eastmost catcher (i.e. the one closest to $[0,n-1]^2$) is responsible for row $0$, the next one for row $1$, and so on, with the westmost catcher being responsible for row $n-1$.
The order is important, since it prevents the returning head from interacting with a catcher that is not responsible for its row.
On the other hand, if the array is inactive and the head enters it from the east or west along its middle part, the array does not interact with it, letting it pass directly through.
The south, east and north catcher arrays behave analogously, so an activated catcher system intercepts the head as it leaves the square $[0, n-1]^2$ from any border coordinate, while an inactive catcher system does not interact with it.

In our construction, the idea is to have a sequence of nested catcher systems which are activated one by one, starting with the outermost system.
The activated system intercepts the head as it exits $[0, n-1]^2$, so that we have a record of the exit coordinate.
Then the head activates the next system to prepare it for another exit.
However, this scheme does not give us a way to actually send the head back into $[0, n-1]^2$ as required by a border process, so we modify it slightly.

Suppose we want to send the head into $[0, n-1]^2$ from the west on row $i \in [0, n-1]$ and intercept it when it later exits the square.
To achieve this, we will activate the south, east and north catcher arrays of the catcher system and \emph{partially activate} the west catcher array: For each $a = 1, \ldots, n$ except for $i + 1$, we activate the $(a,n)$-catcher of the array.
The west $(i+1,n)$-catcher, which is closer to $[0, n-1]^2$ than the $(i,n)$-catcher of the same array, is left inactive.
Then we send the head in from the north edge of the activated $(i,n)$-catcher as in the bottom right part of Figure~\ref{fig:Catcher}, so that it is guided to the west on row $i$ and eventually enters $[0, n-1]^2$ on this row.
Note that since the west $(i+1,n)$-catcher was left inactive, the head will reach $[0, n-1]^2$ unobstructed.
The $(i,n)$-catcher is left with two $1$s that can intercept the head either on row $n-i$ or $n-(i+1)$.
The partial activation of south, east and north catcher arrays is defined similarly.

In addition to the catchers, we need some wiring to guide the head during the activation of the catcher arrays, and to feed the head into our computational gadget after it has been intercepted.
By the definition of a concrete realization, the time taken by the head to travel these wires and visit the square $[0, n-1]^2$ should not depend on the initial contents of the square.
This can be achieved due to our border process having consistent timing.

\begin{lemma}
  \label{lem:ConcreteRealization}
  Let $(\mathcal{P}, f)$ be a border process of size $n$, depth $k$, abstract complexity $C$ and consistent timing.
  Then $(\mathcal{P}, f)$ has a concrete realization $y \in \Sigma^{\Z^2 \setminus [0, n-1]^2}$ in which all $1$s are contained in $[-p, p]^2$ for some $p = O(C^3 + n^4 (k + T(f)))$.
\end{lemma}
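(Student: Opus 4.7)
The plan is to assemble the realization from three ingredients and stitch them together with timing-aware wiring. The first ingredient is a computational gadget obtained by taking an abstract realization $A_f$ of size $C$ and passing it through Lemma~\ref{lem:TMSimulatesNetwork}: this yields a pattern $\Gamma$ of linear size $O(C^3)$ simulating $A_f$ by $M$, with distinguished positions on its border for every terminal $i_j(\vec b)$, $o_j(\vec b)$ (for $j<k$), and $o_k(\vec b,t)$. The second ingredient consists of $k$ concentric catcher systems $\mathcal{C}_1 \supset \cdots \supset \mathcal{C}_k$ of width $n$ placed around $[0,n-1]^2$, each contributing $O(n^2)$ to the diameter; in the initial partial configuration $\mathcal{C}_1$ is fully activated while the remaining layers are inactive and hence transparent to the head on their middle rows and columns.

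The third ingredient is the wiring. Using chains of $1$s to implement straight wires, right and left turns, and delay insertions via the gadgets of Figure~\ref{fig:Delays}, I connect each catch point $\vec b$ on $\mathcal{C}_i$ to the corresponding input terminal $i_i(\vec b)$ of $\Gamma$, and connect each output terminal $o_i(\vec b')$ (for $i<k$) to the partial-activation entry of $\mathcal{C}_{i+1}$ that both primes that system and redirects the head into $[0,n-1]^2$ at $\vec b'$; the final outputs $o_k(\vec b',t)$ instead feed back directly into $[0,n-1]^2$ at $\vec b'$ through a delay of length determined by $t$. One verifies inductively that for any $P \in \mathcal{P}$, the $i$-th exit $\vec b_i$ is caught by $\mathcal{C}_i$ (the inner layers being transparent), routed through $\Gamma$ to the output $o_i(\vec b'_i)$, and correctly re-injected at $\vec b'_i$, so that the realized coordinate sequence matches $f$. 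Timing consistency reduces to choosing the wire delays so that $s_k - t_1 = C_0 + t$ for some constant $C_0$ depending only on the realization: by the consistent-timing hypothesis the intermediate times $T_f(i, \vec b'_{i-1}, \vec b_i)$ depend only on coordinates, so pattern-independent compensating delays can be placed on the input-side wires of $\Gamma$ (indexed by $i$, $\vec b'_{i-1}$ encoded in the network state, and $\vec b_i$), and the remaining coordinate-independent offset $t$ is produced by the length of the final delay gadget selected by $o_k(\vec b',t)$.

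The size accounting gives $O(C^3)$ for $\Gamma$, $O(kn^2)$ for the nested catcher systems, and, since at each of the $O(n)$ border positions of each of the $k$ layers I may need to place a wire of length $O(kn^2 + T(f))$ to accommodate routing through the nested structure and the maximum required delay, an additional $O(n^4(k + T(f)))$ for wiring; together these give the desired $p = O(C^3 + n^4(k + T(f)))$. I expect the main obstacle to be the bookkeeping around partial activation of the catcher systems: after $\mathcal{C}_{i+1}$ is first used to inject the head into $[0,n-1]^2$ at coordinate $\vec b'_i$ and later used to catch an exit at a different coordinate $\vec b_{i+1}$, one must verify that the residual $1$s of its partially-used catchers are positioned exactly right to intercept the head on every admissible row or column, that no choice of coordinates causes the head to miss a catcher, collide with a wire it should not, or disturb a not-yet-activated deeper layer, and that all the required wires can in fact be routed through the available annuli without crossings that break the delay calculations. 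Once this layout is shown to exist, the individual pieces are assembled from the primitives already developed in Sections~\ref{sec:TheMachine} and~\ref{sec:BorderProcesses}.
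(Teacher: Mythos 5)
Your high-level architecture is the same as the paper's: simulate an abstract realization $A_f$ via Lemma~\ref{lem:TMSimulatesNetwork}, surround $[0,n-1]^2$ with nested catcher systems (later rounds further in), connect everything with delay-padded wires, and use consistent timing to make the inter-visit times pattern-independent. However, two of your economizations create genuine gaps. First, the final injections: you send the head from $o_k(\vec{b}',t)$ ``directly into $[0,n-1]^2$'' through a delay wire, bypassing the catcher mechanism. But a wire that delivers the head to a border coordinate must end with a straight run along the row or column of $\vec{b}'$, and the last turn onto that run requires a $1$ on or adjacent to that line. If that $1$ sits inside the innermost annulus (between $\mathcal{C}_k$ and the square), it lies in the corridor that the head crosses on \emph{every} exit, and since the exit coordinates depend on the unknown input pattern, some pattern will make the head collide with it in an earlier round. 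If instead the turn is placed outside all systems and the head approaches along the full corridor, it must pass inward through catchers of $\mathcal{C}_1,\dots,\mathcal{C}_k$ that are already (partially) activated by round $k$; an activated catcher has a $1$ on its responsible row, so the inward-travelling head is turned and the injection fails. This is exactly why the paper uses one dedicated, initially inactive (hence transparent) catcher system $C_{k,\vec{b},t}$ per final output terminal, placed innermost, so that the turning $1$s only appear in the corridor at the moment they are needed.

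Second, sharing one catcher system $\mathcal{C}_{i+1}$ per round breaks the timing compensation and the activation wiring. With a shared system, the physical return wire from the catch point for exit coordinate $\vec{b}_{i+1}$ is the same regardless of the injection coordinate $\vec{b}'_i$, yet the delay needed to make the interval between leaving and re-entering the computational gadget constant is $T_{i+1} - T_f(i+1,\vec{b}'_i,\vec{b}_{i+1}) - (\text{geometry})$, which \emph{does} depend on $\vec{b}'_i$. Your fix --- indexing the delay by ``$\vec{b}'_{i-1}$ encoded in the network state'' --- is not available: the abstract realization is by definition a constant-weight normed network, so its traversal time cannot depend on its internal state, and no other component of the layout distinguishes $\vec{b}'_i$ at that point. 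Likewise, coordinate-dependent partial activation of a shared system requires the $O(n)$ alternative activation paths (one per possible $\vec{b}'$) to funnel into the single activation entry of each catcher; plain wires cannot be merged (a turning $1$ placed for one path corrupts the other), and merging is only available inside the simulated network. The paper avoids both problems by instantiating a separate catcher system $C_{j,\vec{b}}$ for every output terminal $o_j(\vec{b})$, so that each system has a single fixed activation wire and each return wire is indexed by the full triple $(j,\vec{b},\vec{b}')$, on which a fixed compensating delay can be placed; this costs only $O(kn+T(f)n)$ systems of diameter $O(n^2)$ each, which still fits in the stated bound $p=O(C^3+n^4(k+T(f)))$. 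As written, your proposal does not go through without adopting essentially this modification.
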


\begin{proof}
By Lemma~\ref{lem:TMSimulatesNetwork}, there exist numbers $m = O(C^3)$ and $m' = O(C^2)$, a function $F(i) = c i + d$ with $c, d = O(C^2)$, and an $(m, m', F)$-simulation of $A_f$ by $M$.
We denote by $P_f$ the $m \times m'$-pattern realizing this simulation, and place it in $y$ at an arbitrary location that is sufficiently far away from the origin.

We place around the square $[0, n-1]^2$ one activated catcher system $C_0$ of width $n$ called the \emph{initial catcher system}.
For each output terminal $o_j(\vec{b})$ or $o_k(\vec{b}, t)$ of $A_f$, we also place one inactive catcher system $C_{j, \vec{b}}$ or $C_{k, \vec{b}, t}$ of width $n$, in such a way that $C_0$ is the outermost system, the systems $C_{j, \vec{b}}$ are placed successively closer to the origin in ascending order of $j$, and the systems $C_{k, \vec{b}, t}$ are placed the closest to the origin.

We add some wires to $y$ that connect the catcher systems to the pattern $P_f$.
For each $\vec{b} \in B_n$, from the position of the initial catcher array $C_0$ where the head of $M$ is redirected if it exits $[0, n-1]^2$ via $\vec{b}$, we add $1$-cells to $y$ that redirect it to the input terminal $i_0(\vec{b})$ of $P_f$.
For each output terminal $o_j(\vec{b})$ of $P_f$ with $j < k$, we add $1$-cells that redirect the head to the inactive catcher system $C_{j,\vec{b}}$, then guide it to partially activate the system so that the head can be sent into $[0, n-1]^2$ through the coordinate $\vec{b}$, and finally redirect it into the system to be sent in through that coordinate.
For each $\vec{b}' \in B_n$ we also add $1$-cells that, once the catcher system $C_{j,\vec{b}}$ is activated and catches the head as it exits $[0, n-1]^2$ via $\vec{b}'$, redirect it to the input terminal $i_j(\vec{b}')$ of $P_f$.
From each output terminal $o_k(\vec{b}, t)$ of $P_f$, we similarly guide the head to partially activate $C_{k, \vec{b}, t}$ and send the head into $[0, n-1]^2$ via $\vec{b}$.

As in the proof of Lemma~\ref{lem:TMSimulatesNetwork}, we have a lot of control on the number of steps the head of $M$ takes during these redirections.
First, we guarantee that the number of steps between the first exit from $[0, n-1]^2$ and the first entrance into $P_f$ via some terminal $i_1(\vec{b})$ is a constant, say $T_1$.
Recall that since $f$ has consistent timing, the number of steps the head spends inside $[0, n-1]^2$ after being sent in by the catcher array $C_{j, \vec{b}}$ is a function of $j$, $\vec{b}$ and the exit coordinate $\vec{b}'$, and we denote it by $T_f(j, \vec{b}, \vec{b}')$.
We can thus guarantee that the number of steps between the head exiting $P_f$ from terminal $o_j(\vec{b})$ and re-entering via terminal $i_{j+1}(\vec{b}')$ is also a constant, say $T_{j+1}$.
Finally, we guarantee that the number of steps between the head exiting $P_f$ via terminal $o_k(\vec{b}, t)$ and entering $[0, n-1]^2$ is $T_{k+1} + t$ for some constant $T_{k+1}$.

It is now easy to see that $y$ is a concrete realization of $(\mathcal{P}, f)$.
It remains to bound the number $p$.
The pattern $P_f$ has size $O(C^3) \times O(C^2)$, and we have a total of $O(k n^2 + T(f) n^2)$ catcher systems and ``wires'' guiding the head of $M$ from $P_f$ to these systems and back.
By Lemma~\ref{lem:FastEscape}, $O(n^4)$ delay components suffice for each wire, which can be laid out in an $O(n^2) \times O(n^2)$ pattern.
All this can be fit into a square pattern with side length $O(C^3 + n^4 (k + T(f)))$.
\end{proof}

\begin{figure}
\begin{center}
\begin{tikzpicture}[scale=0.5]

\fill [gray!15] (-6,0.2) rectangle (7,0.8);
\fill [gray!15] (0.2,-6) rectangle (0.8,7);

\draw (0.2,0.2) rectangle (0.8,0.8);

\foreach \x in {1.5,3.5,5.5}{
  \foreach \xx/\h in {0/0.7,0.5/0.5,1/0.3}{
    \draw [densely dotted] (\x+\xx,-0.1) rectangle (\x+\xx+0.4,1.1);
    \draw (\x+\xx,\h) -- ++(0.4,0);
    
    \draw [densely dotted] (-0.1,\x+\xx) rectangle (1.1,\x+\xx+0.4);
    \draw (1-\h,\x+\xx) -- ++(0,0.4);
    
    \draw [densely dotted] (0.6-\x-\xx,-0.1) rectangle (0.6-\x-\xx+0.4,1.1);
    \draw (0.6-\x-\xx,1-\h) -- ++(0.4,0);
    
    \draw [densely dotted] (-0.1,0.6-\x-\xx) rectangle (1.1,0.6-\x-\xx+0.4);
    \draw (\h,0.6-\x-\xx) -- ++(0,0.4);
  }
}

\end{tikzpicture}
\end{center}
\caption{The collection of catcher systems constructed in the proof of Lemma~\ref{lem:ConcreteRealization}, not drawn to scale. The solid lines mark the row or column each catcher is responsible for. The outermost catcher system is $C_0$.}
\label{fig:ManyArrays}
\end{figure}
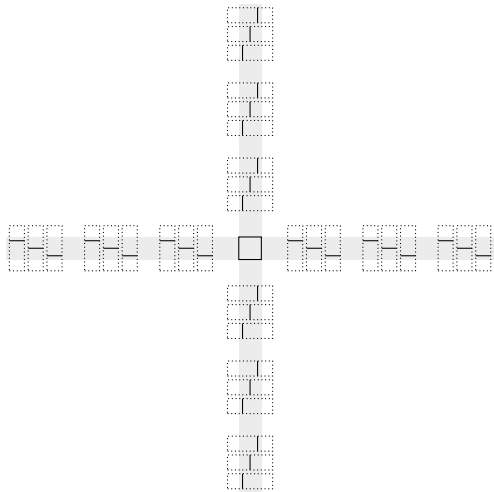

\section{Implementation of Transformations by Border Processes}

In this section, we prove that a border process can implement an arbitrary transformation on the set of local patterns of shape $m \times m$, if they are embedded inside a larger $n \times n$-pattern in a suitable way.
The statement is visualized in Figure~\ref{fig:ProcessCanDo}.
We can also guarantee that the process has consistent timing, as required by Lemma~\ref{lem:ConcreteRealization}, and we have precise control over its time component.

\begin{lemma}
\label{lem:ProcessCanDo}
  Let $m \in \N$, let $A \subset \npats{m}{Q}{\Sigma}$ be a set of patterns with disjoint $0$-orbits that contain the head of $M$, and let $C_1$ be the circuit complexity of $A$.
  Let $g : A \to \Sigma^{[0, m-1]^2}$ and $g' : A \to \N$ be any functions, and let $C_2$ be the circuit complexity of $R \mapsto (g(R), g'(R))$.
  Then there exists $n = O(m^4)$, a pattern $P \in \Sigma^{[0, 2n + m - 1]^2 \setminus [n, n+m-1]^2}$ and a border process $(\mathcal{P}, f)$ of size $2n+m$, depth $k = O(m^8)$ and abstract complexity $O(m^{32} + (m^8 C_1 + C_2 + m^6)^2)$ with consistent timing such that $\mathcal{P} = \{ R \sqcup P \;|\; R \in A \}$ and for each $R \in A$, denoting $x = 0^{\Z^2 \setminus [0, 2n+m-1]^2} \sqcup P \sqcup R$, we have $f(x) = (y,g'(R))$ such that $y|_{[n, n+m-1]^2}$ is a translated version of $g(R)$ and $x_{\vec{v}} = y_{\vec{v}} = 0$ for all $\vec{v} \in ([0, 2n+m-1] \times [n-3, n+m+2] \cup [n-3, n+m+2] \times [0, 2n+m-1]) \setminus [n,n+m-1]^2$.
\end{lemma}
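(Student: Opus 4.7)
The plan is to engineer the inside pattern $P$ as a collection of single-use ``gadgets'' and the function $f$ — concretely realized via Lemma~\ref{lem:ConcreteRealization} — as an orchestrating circuit that triggers these gadgets one at a time. Each trip of the head through the border region performs one atomic operation: either ``read cell $(i,j)$ of the central region'' (the bit read is encoded in the choice of exit coordinate) or ``write bit $b$ into cell $(i,j)$'' (effected by the gadget's internal wiring).

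We choose $n = O(m^4)$ large enough that, by Lemma~\ref{lem:FastEscape}, the head escapes the central $m \times m$ region in $O(m^4)$ steps, after which by Lemma~\ref{lem:NoEscape} it travels in a straight line along the mandated $0$-valued cross strip and exits $[0,2n+m-1]^2$. The pattern $P$ consists of a \emph{launcher} that catches the head on its first escape and sends it to a canonical first-exit coordinate, an $O(m^2)$-bank of single-use scan gadgets keyed to distinct border coordinate pairs, and a symmetric $O(m^2)$-bank of single-use write gadgets. Each gadget is built from a handful of $1$s together with wiring as in the proof of Lemma~\ref{lem:TMSimulatesNetwork}, padded with the delay components of Figure~\ref{fig:Delays} so that every invocation takes a fixed number of steps depending only on the entry/exit coordinates, ensuring consistent timing.

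The abstract realization $A_f$ is produced by Lemma~\ref{lem:CircuitAutomaton} from a Boolean circuit that chains three things: (i) the circuit of Lemma~\ref{lem:CircuitSimulation} which, given the first-exit state together with the subsequent bits reported by the scanners, recovers $R$ and $\tau^M(R)$, of size $O(m^4 C_1 + m^6)$; (ii) the given circuit computing $(g(R), g'(R))$, of size $C_2$; and (iii) dispatch logic of size $O(m^6)$ that routes tokens to the appropriate entries of the scanner and writer banks. The quadratic blowup of Lemma~\ref{lem:CircuitAutomaton} yields the $(m^8 C_1 + C_2 + m^6)^2$ term (the $m^8 C_1$ factor absorbing iteration of the circuit-simulation overhead across the scans), and the depth $k = O(m^8)$ combined with $O(n) = O(m^4)$ border coordinates forces a raw routing cost of $O(m^{32})$ when the normed networks over $O(m^{16})$ terminals are expanded via the construction of Figure~\ref{fig:s-automata-cons}. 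The final time component $g'(R)$ is produced by passing the last token through a chain of $g'(R)$ trivial merges before emission at $o_k(\vec b, g'(R))$.

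The main obstacle is the geometric design of the scanner and writer banks inside $P$: each gadget must be single-use (its $1$s are permuted by the head's traversal), must not intercept the head on any other trip, must respect the $0$-valued cross strip around the central region, and must have its traversal time independent of the current contents of that region. Since $n = \Theta(m^4)$ the available area outside the cross strip is ample, and the gadgets can be tiled in the four corner sectors with enough slack for delay padding, relying on the straight-line exit property of Lemma~\ref{lem:NoEscape} to keep distinct gadgets dynamically isolated.
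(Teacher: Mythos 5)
Your proposal founders on the central difficulty that the paper's proof is built around: the machine $M$ is symbol-conserving, so nothing in the annulus $P$ can ``write bit $b$ into cell $(i,j)$'' of the region $[n,n+m-1]^2$ by ``internal wiring''. Wires and gadgets made of $1$s can only redirect the head; the only way the contents of the central region ever change is by the head colliding with a $1$ and moving it one diagonal step, once per pass through the region. Consequently producing $g(R)$ requires physically assembling it: the paper first clears the region (the probe stage both reads the leftmost $1$ of each row \emph{and} drags each discovered $1$ away over $O(m^4)$ extra rounds so it cannot interfere later), then spends many rounds transporting a pre-placed staircase-shaped block of $1$s (the clay) onto the region one cell per round, and finally sculpts it by extracting $1$s at the positions where $g(R)$ has a $0$ and dragging them out, in a carefully chosen order of coordinates so the extraction paths stay clear. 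Your single-use ``write gadgets'' have no counterpart of this mechanism, and your ``read cell $(i,j)$'' primitive is likewise not atomic: sending the head in along a row only reveals (and displaces) the \emph{first} $1$ on that row, so reading is necessarily an adaptive multi-round process interleaved with garbage removal, which is exactly what the paper's probe stage organizes. You also do not account for the final requirement that the cross strip around the central square be all $0$ at the end, which forces an additional clean-up substage for the displaced input $1$s and unused clay.

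A smaller but real problem is your treatment of the time component $g'(R)$: an abstract realization is required to be a \emph{constant-weight} normed network whose token exits through the terminal $o_k(\vec b, t)$ with $t = g'(R)$; routing the token through a data-dependent chain of $g'(R)$ trivial merges changes the weight (breaking constant weight) rather than selecting the correct output terminal. The paper instead stores $g'(R)$ in output switches during the computation stage and uses them to steer the token to the right terminal $o_k(\vec b, t)$, leaving the actual delay to the concrete realization of Lemma~\ref{lem:ConcreteRealization}. Your use of Lemmas~\ref{lem:CircuitSimulation}, \ref{lem:CircuitAutomaton} and \ref{lem:TMSimulatesNetwork} for the computation and orchestration is in the right spirit and matches the paper's computation stage, but without the probe/transport/sculpt machinery the construction cannot realize the claimed transformation.
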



\begin{figure}[htp]
  \begin{center}
    \begin{tikzpicture}



      \fill [black!30] (1.4,0) rectangle (0,1.4);
      \fill [black!30] (1.4,4) rectangle (0,2.6);
      \fill [black!30] (2.6,0) rectangle (4,1.4);
      \fill [black!30] (2.6,4) rectangle (4,2.6);
      
      \fill [black!30] (6.4,0) rectangle (5,1.4);
      \fill [black!30] (6.4,4) rectangle (5,2.6);
      \fill [black!30] (7.6,0) rectangle (9,1.4);
      \fill [black!30] (7.6,4) rectangle (9,2.6);

      \draw [dashed] (1.4,0) -- (1.4,1.4) -- (0,1.4);
      \draw [dashed] (1.4,4) -- (1.4,2.6) -- (0,2.6);
      \draw [dashed] (2.6,0) -- (2.6,1.4) -- (4,1.4);
      \draw [dashed] (2.6,4) -- (2.6,2.6) -- (4,2.6);
      \draw (0,0) rectangle (4,4);
      \draw [fill=black!15] (1.5,1.5) rectangle (2.5,2.5);

      \draw (5,0) rectangle (9,4);
      \draw [fill=black!15] (6.5,1.5) rectangle (7.5,2.5);

      \draw [dashed] (6.4,0) -- (6.4,1.4) -- (5,1.4);
      \draw [dashed] (6.4,4) -- (6.4,2.6) -- (5,2.6);
      \draw [dashed] (7.6,0) -- (7.6,1.4) -- (9,1.4);
      \draw [dashed] (7.6,4) -- (7.6,2.6) -- (9,2.6);

      \node at (4.5,2) {$\stackrel{f}{\mapsto}$};
      \node at (2,2) {$R$};
      \node at (0.75,0.75) {$P$};
      \node at (7,2) {$g(R)$};
      \node at (5.75,0.75) {$?$};
      
    \end{tikzpicture}
  \end{center}
  \caption{A diagram of Lemma~\ref{lem:ProcessCanDo}, not drawn to scale. The inner and outer squares have size $m \times m$ and $(2n+m) \times (2n+m)$, respectively.}
  \label{fig:ProcessCanDo}
\end{figure}
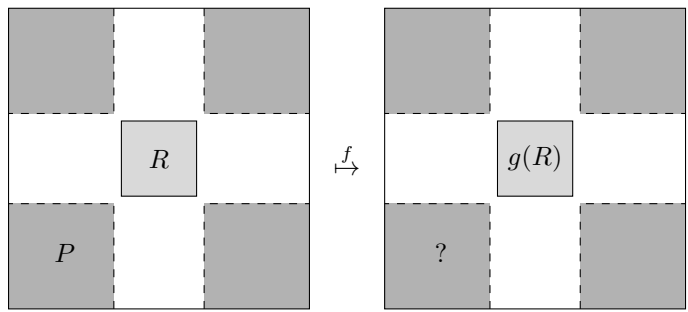

\begin{proof}
  We explicitly construct the pattern $P$, border process $(\mathcal{P}, f)$ and small abstract realization $A_f$ that have the required properties.
  The construction proceeds in five stages, which we call the \emph{escape stage}, \emph{probe stage}, \emph{transport stage}, \emph{computation stage} and \emph{sculpt stage}, some of which are further divided into substages.
  Each (sub)stage consists of a constant number of \emph{rounds}, and a round corresponds to defining $f[\vec{b}_1, \ldots, \vec{b}_p]$ for all applicable sequences of a fixed length $p$.
  Equivalently, round number $p$ consists of the head leaving the square $[0, 2n+m-1]^2$ for the $p$th time and returning on some border coordinate.
  
  In the escape stage, the head of $M$ escapes the region $R = [n, n+m-1]^2$ and then the larger square $[0, 2n + m - 1]^2$.
  In the probe stage, the head is repeatedly sent in from the west to probe the x-coordinate of a single $1$ occurring on a specific horizontal row of $[n, n+m-1]^2$, after which it is moved far away from the region by repeatedly colliding the head with it.
  After the probe stage, the region $[n, n+m-1]^2$ contains no more $1$s, and the transport stage consists of moving a large solid block of $1$s onto it.
  This is again achieved by repeatedly colliding the head with a single $1$, moving it toward the target by one diagonal step.
  In the computation stage, a separate part of the normed network $A_f$ simulates a Boolean circuit that computes the output pattern.
  The number $t = \tau^M_{[0, 2n+m-1]^2}(x)$ is also computed at this point.
  In the sculpt stage, some of the $1$s in the solid block are `carved out' and moved out of the region, in a process that mirrors the probe stage.
  In this way, the desired output pattern can be constructed on the region $[n, n+m-1]^2$ at the completion of the border process.
  
  We begin with the escape stage, and for this, we define the pattern $P$ to be filled with $0$s for now; we will add some $1$s to it later.
  By Lemma~\ref{lem:NoEscape}, the $1$s that are originally in the region $R$ will not travel outside $R_1 = R + [-1, 1]^2$ before the head of $M$ leaves the region $R_1$ either along some row with y-coordinate in $[n-2, n+m+1]$, or along some column with x-coordinate in $[n-2, n+m+1]$.
  Hence it will travel to some border of $P$, and escape at some coordinate $\vec b \in B_{2 n + m}$.
  Denote by $P' \in \Sigma^{R_1}$ the square pattern that remains at $R_1$ after this.
  
  In the abstract realization $A_f$, we add $4(2n+m)$ switches $\bar{s}_{1,\vec{b}}$, one for each $\vec{b} \in B_{2n+m}$, and connect each input terminal $i_0(\vec{b})$ of $A_f$ to terminal $i$ of $\bar{s}_{1,\vec{b}}$.
  These are called the \emph{escape switches}.
  We connect the terminal $o$ of each $\bar{s}_{1,\vec{b}}$ into one $4(2n+m)$-merge that combines them all into a single terminal.
  This is depicted in Figure~\ref{fig:EscapeNetwork}, which shows the input terminals $i_0(\vec{b})$ for all choices of $\vec{b} \in B_{2n+m}$, the escape switches and the merge.
  The $4(2n+m)$-merge increases the size of $A_f$ by $O(n^4)$, since we can implement a disposable $k$-merge with $O(k^2)$ primitive components.
  
  \begin{figure}[ht]
  \begin{center}
  \begin{tikzpicture}[yscale=-1]
  
  \fill [black!15] (0.7,-0.5) rectangle (3,3.5);
  
  \foreach \y in {0,0.5,1,3}{
    \draw (0,\y) -- (1.25,\y);
    \draw [fill=white] (1,\y-0.125) rectangle (1.75,\y+0.125);
    \draw [dotted] (1,\y) -- (1.75,\y);
    \draw (1.75,\y) -- (2,\y);
  }
  \draw (2,2) -- (2.5,2);
  \foreach \x/\y in {0.25/2}{
    \node [above=-0.24cm] at (\x,\y) {$\vdots$};
  }
  
  \draw (3,-0.5) -- (0.7,-0.5) -- (0.7,3.5);
  
  \node [left] at (0,0) {$i_0((-1,0))$};
  \node [left] at (0,0.5) {$i_0((-1,1))$};
  \node [left] at (0,1) {$i_0((-1,2))$};
  \node [left] at (0,3) {$i_0((2n+m-1,2n+m))$};
  \node [right] at (2.5,2) {$o$};
  
  \draw [ultra thick] (2,-0.25) -- (2,3.25);
    
  \end{tikzpicture}
  \end{center}
  \caption{The escape stage in $A_f$.}
  \label{fig:EscapeNetwork}
  \end{figure}
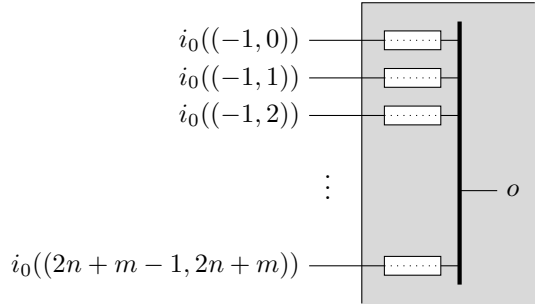
  
  Next, we handle the probe stage.
  It is divided into several substages, one for each $i \in [-1, m]$, and the purpose of substage $i$ is to determine the positions of $1$s on row $n+i$.
  For each substage $i$ and for each $j \in [0, m+1]$, choose some large number $n > N_{i,j} = O(m^4)$ with $N_{i,j} > N_{i',j'}$ when $(i,j) < (i',j')$ in the lexicographic order.
  For each $0 \leq k < N_{i,j}$, which constitute a total of $\sum_{i,j} N_{i,j} = O(m^6)$ rounds, we define $f$ to place the head on the west border of $P$ at y-coordinate $n+i-k$, regardless of where it exited on the previous rounds.
  
  Denote by $p(i,j,k) \in \N$ the number of the round corresponding to $i$, $j$ and $k$.
  In the normed network $A_f$, we implement each of these rounds by simply combining all input terminals $i_{p(i,j,k)}(\vec{b})$ into a $4(2n+m)$-merge and connecting its output into the correct output terminal, except when $k = 0$.
  In the case $k = 0$, we proceed as follows.
  For each $(a,b) \in R_1$ we add to $A_f$ an $(m+2,1)$-switch $\bar{s}^{(m+2,1)}_{(a,b)}$ whose task is to remember the bit $P'_{(a, b)}$.
  These switches are called the \emph{input switches}.
  For each $i$, $j$ and $a$, we connect the input terminal $i_{p(i,j,0)}((a,-1))$ of $A_f$ to the input terminal $i_j$ of $\bar{s}^{(m+2,1)}_{(a,n+i)}$.
  The outputs $o_j$ of these switches, as well as other input terminals $i_{p(i,j,0)}(\vec{b})$, are connected to a $4(2n+m)$-merge that connects them to the next output terminal.
  We also add $O(m^2)$ trivial merges to maintain the property of constant weight.
  See Figure~\ref{fig:ProbeAut} and Figure~\ref{fig:ProbeAut2} for a visualization.
  
  \begin{figure}[ht]
    \begin{center}
      \begin{tikzpicture}[yscale=-1]
        
        \fill [black!15] (-0.5,0) rectangle (1,3.5);
        \draw (1,3.5) -- (1,0) -- (-0.5,0);

        \fill [black!15] (5,0) rectangle (7,3.5);

        \draw (7,0) -- (5,0) -- (5,3.5);

        \draw (0.5,1.75) -- (1.5,1.75);
        \node [left] at (0.5,1.75) {$o$};
        \node [right] at (1.5,1.75) {$o_{p-1}(\vec{b})$};

        \foreach \y in {0.5,1,2.5,3}{
          \draw (4.5,\y) -- (5.5,\y);
        }
        \node at (4.65,1.7) {$\vdots$};
        \node [left] at (4.5,0.5) {$i_p((-1,0))$};
        \node [left] at (4.5,1) {$i_p((-1,1))$};
        \node [left] at (4.5,2.5) {$i_p((2n+m-2,2n+m))$};
        \node [left] at (4.5,3) {$i_p((2n+m-1,2n+m))$};

        \draw [ultra thick] (5.5,0.25) -- (5.5,3.25);
        \draw (5.5,1.75) -- (6,1.75);
        \node [right] at (6,1.75) {$o$};

      \end{tikzpicture}
    \end{center}
    \caption{Implementation of round $p = p(i,j,k)$ of the probe stage in $A_f$ with $k \neq 0$, where $\vec{b} = (2n+m,n+i-k)$. The leftmost $o$ is the output terminal of the $4(2m+n)$-merge constructed in round $p-1$.}
    \label{fig:ProbeAut}
  \end{figure}
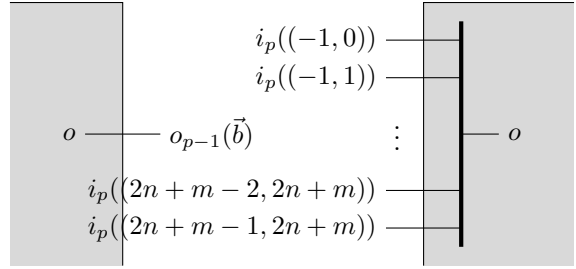
  
  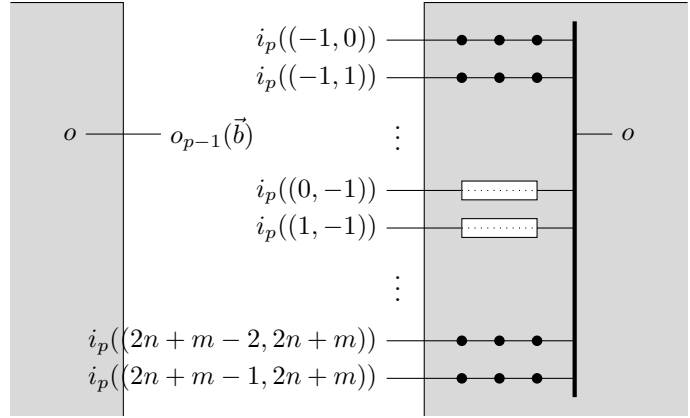
\begin{figure}[ht]
    \begin{center}
      \begin{tikzpicture}[yscale=-1]

        \fill [black!15] (-0.5,0) rectangle (1,5.5);
        \draw (1,5.5) -- (1,0) -- (-0.5,0);

        \fill [black!15] (5,0) rectangle (8.5,5.5);
        \draw (8.5,0) -- (5,0) -- (5,5.5);

        \draw (0.5,1.75) -- (1.5,1.75);
        \node [left] at (0.5,1.75) {$o$};
        \node [right] at (1.5,1.75) {$o_{p-1}(\vec{b})$};

        \foreach \y in {0.5,1,4.5,5}{
          \draw (4.5,\y) -- (7,\y);
          \foreach \x in {5.5,6,6.5}{
            \fill (\x,\y) circle (0.07);
          }
        }
        \foreach \y in {2.5,3}{
          \draw (4.5,\y) -- (5.5,\y);
          \draw [fill=white] (5.5,\y-0.125) rectangle (6.5,\y+0.125);
          \draw [dotted] (5.5,\y) -- (6.5,\y);
          \draw (6.5,\y) -- (7,\y);
        }
        \node at (4.65,1.7) {$\vdots$};
        \node at (4.65,3.7) {$\vdots$};
        \node [left] at (4.5,0.5) {$i_p((-1,0))$};
        \node [left] at (4.5,1) {$i_p((-1,1))$};
        \node [left] at (4.5,2.5) {$i_p((0,-1))$};
        \node [left] at (4.5,3) {$i_p((1,-1))$};
        \node [left] at (4.5,4.5) {$i_p((2n+m-2,2n+m))$};
        \node [left] at (4.5,5) {$i_p((2n+m-1,2n+m))$};

        \draw [ultra thick] (7,0.25) -- (7,5.25);
        \draw (7,1.75) -- (7.5,1.75);
        \node [right] at (7.5,1.75) {$o$};
  
        
        
        
        
        
      \end{tikzpicture}
    \end{center}
    \caption{Implementation of round $p = p(i,j,0)$ of the probe stage in $A_f$ with $\vec{b} = (2n+m,n+i-k)$. The leftmost $o$ is the output terminal of the $4(2m+n)$-merge constructed in round $p-1$. Each terminal $i_p((a,-1))$ is connected to the input switch $\bar{s}^{(m+2,1)}_{(a,n+i)}$.}
    \label{fig:ProbeAut2}
  \end{figure}
  
  The effect of this definition is the following.
  When $i = -1$ and $j = k = 0$, the head enters on row $n-1$, which is the lowest row of $R_1$.
  The head travels to the east until it encounters either a $1$ on the lowest row of $R_1$ or the east border of $P$.
  If a $1$ is encountered at some coordinate $(a, n-1)$, then the head makes a right turn, moves the $1$ to $(a-1,n-2)$, travels to the south border of $P$ and exits at coordinate $(a, -1)$.
  If no such $1$ was encountered, the the head exits at $(2n+m, n-1)$.
  We informally say that $f$ `remembers' this information, as its subsequent output values may depend on it.
  In general, on round $i,j,k$ of the probe stage $f$ remembers the exit coordinate of the head if $k = 0$.
  In the normed network $A_f$, this is implemented with the input switches $\bar{s}^{(m+2,1)}_{(a,b)}$.
  
  Having obtained the position of the $1$, $f$ places the head on the west border of $P$ at y-coordinate $n-2$, where it encounters the $1$ again, this time at coordinate $(a-1, n-2)$, moving it to $(a-2, n-3)$, and again exiting at the south border.
  We collide with the $1$ for a total of $N_{0,0}$ times, eventually placing it at $(a-N_{0,0}, n-1-N_{0,0})$.
  The purpose of this is to transport the $1$ out of the way, so that we can safely probe for the position of all $1$s in $P'$, and perform the remaining stages.
  This process is visualized in Figure~\ref{fig:ProbeStage}.

  \begin{figure}[htp]
    \begin{center}
      \begin{tikzpicture}[scale=0.5]
        
        \begin{scope}
          \foreach \x/\y in {1/4,3/3,3/5,4/3,4/4,4/6,5/5,6/3,6/4,7/6}{
            \fill [black!15] (\x,\y) rectangle ++ (1,1);
            \node at (\x+1/2,\y+1/2) {$1$};
          }
          
          \draw (0,0) grid (8,7);
        \end{scope}
        
        \begin{scope}[xshift=10cm]
          \foreach \x/\y in {1/4,2/2,3/5,4/3,4/4,4/6,5/5,6/3,6/4,7/6}{
            \fill [black!15] (\x,\y) rectangle ++ (1,1);
            \node at (\x+1/2,\y+1/2) {$1$};
          }
          
          \draw [densely dotted,>->] (-0.5,3.5) -| (3.5,-0.5);
          
          \draw (0,0) grid (8,7);
        \end{scope}
        
        \begin{scope}[yshift=-9cm]
          \foreach \x/\y in {1/4,1/1,3/5,4/3,4/4,4/6,5/5,6/3,6/4,7/6}{
            \fill [black!15] (\x,\y) rectangle ++ (1,1);
            \node at (\x+1/2,\y+1/2) {$1$};
          }

          \draw [densely dotted,>->] (-0.5,2.5) -| (2.5,-0.5);
          
          \draw (0,0) grid (8,7);
        \end{scope}
        
        \begin{scope}[xshift=10cm,yshift=-9cm]
          \foreach \x/\y in {1/4,0/0,3/5,4/3,4/4,4/6,5/5,6/3,6/4,7/6}{
            \fill [black!15] (\x,\y) rectangle ++ (1,1);
            \node at (\x+1/2,\y+1/2) {$1$};
          }

          \draw [densely dotted,>->] (-0.5,1.5) -| (1.5,-0.5);
          
          \draw (0,0) grid (8,7);
        \end{scope}
        
      \end{tikzpicture}
    \end{center}
    \caption{The head of $M$ extracts a $1$ from the bottom row of the region $R'$.}
    \label{fig:ProbeStage}
  \end{figure}
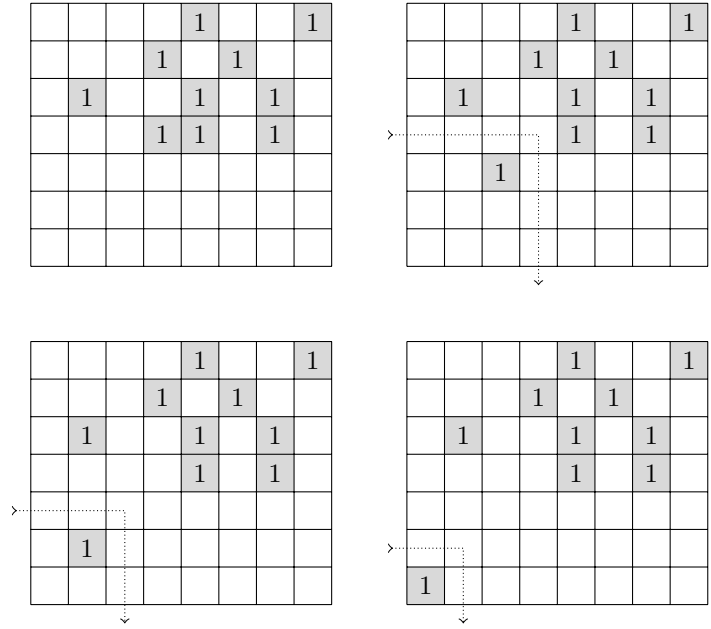

  Indeed, after the process of moving the $1$, $f$ places the head on row $n-1$ again, probing for the position of another $1$.
  This corresponds to round $p(-1,1,0)$.
  This process repeats $m+2$ times regardless of how many $1$s were found, after which we proceed to substage $0$ and start probing row $n$.
  We continue in this way, repeatedly probing for a $1$ on the lowest nonempty row, moving it out of the way, and proceeding to the next row after $m+2$ repetitions.
  Whenever the head encounters a $1$ at some coordinate $(a,n+i)$ during round $p(i,j,0)$, it exits $P$ at coordinate $(a,-1)$, and the token of $A_f$ is routed through terminal $i_j$ of input switch $\bar{s}^{(m+2,1)}_{(a,n+i)}$.
  The reason for using an $(m+2,1)$-switch is that the value of $j$ depends on the number of $1$-symbols on row $n+i$ to the left of $(a,n+i)$, and can vary between different input patterns.
  
  The last round of this process is $p(m,m+1,N_{m,m+1})$.
  If $n$ is large enough ($n = \Omega(m^4)$ suffices for this), there is enough room to transport each $1$ out of the way following the first collision with it, after which it does not affect the remaining steps of this stage.
  Once the top row of $R_1$ has been completely probed, the head exits through the east border of $P$ and we have complete information of the contents of $R_1$ at the end of the escape stage.
  To implement the probe stage, we added to $A_f$ the $O(m^2)$ input switches, and for each of the $O(m^6)$ rounds, a single $4(2n+m)$-merge and possibly $O(m^2)$ trivial merges.
  These changes can be implemented by $O(m^8 n^4)$ primitive components.
  This concludes the probe stage.
  
  We now describe the transport stage.
  At this point, the region $R$ contains only $0$s, and there are also $0$s to the east, north, west and south of it.
  There is a collection of $1$s to the southwest of $R$, which are the remnants of the probe stage. 
  We introduce new $1$s into the pattern $P$, to the northeast of the region $R$.
  For each $j \in [0, 5 m^2 + 1]$ and $i \in [0, 5 m^2 + 1 + j]$, we add a $1$ at the coordinate $(n + 3 + i, n + 3 + j)$.
  We call this pattern of $1$s the \emph{clay}, as it will be used in the sculpt stage.
  Of course, the clay also present during the escape and probe stages, but does not affect them, as its southmost row lies two steps to the north of the row $n + m + 1$, which is the northmost row of $R_1$.
  Note that the bottom row of $1$s in this pattern is the longest one, and the lengths of consecutive rows differ by $1$.
  
  We describe a continuation of the border process $f$ that translates the clay to the southwest by a single step.
  This is done one row at a time, starting from the bottom row, and each row is translated one cell at a time, from west to east.
  We send the head of $M$ in from the south border of $P$, on the column $n + m + 2$, which is one step to the west of the westmost column of the clay.
  The head travels north until it is diagonally adjacent to the southwest corner of the clay, where it makes a left turn to the west and moves the $1$ at the corner from $(n + m + 3, n + m + 3)$ to $(n+m+2,n+m+2)$, and then travels to the west border of $P$.
  Next, the head is placed on the column $n + m + 3$, one step to the east.
  It again travels north, moves a $1$ from $(n + m + 4, n + m + 3)$ to $(n+m+3, n + m + 2)$ by a left turn, and travels to the west border of $P$.
  We continue in this manner, placing the head on column $n+m+2+i$ for each $i \in [0,5m^2+1]$ and moving a single $1$ on the row $n+m+2$.
  We then repeat this procedure for the other rows: for each $j$ from $0$ to $5m^2+1$, and for each $i$ from $0$ to $5m^2+1+j$, we place the head on the south border of $P$ on column $n+m+2+i$, from where it travels north and moves a $1$ from $(n+m+3+i, n+m+3+j)$ to $(n+m+2+i, n+m+2+j)$ by a left turn.
  The rules of $M$ prevent the head from making any additional turns; the distinct heights of the columns are used to guarantee this when the eastmost $1$s are moved.
  When this operation is complete, we have moved the entire clay one step to the southeast.
  By repeating it, we move the clay for a total of $5m^2+m+3$ steps, after which we have a $1$ at coordinate $(n + i, n + j)$ for all $j \in [m-5m^2, m-1]$ and $i \in [m-5m^2, m-1+j]$.
  Since the values of $f$ during this stage are fixed, for each step of the process we only need to add a single $4(m+2n)$-merge to $A_f$ that directs the token to the correct output terminal, as we did in the probe stage for the rounds $p(i,j,k)$ with $k \neq 0$.
  The number of primitive components needed for this is $O(m^6 n^4)$.
  This concludes the transport stage.
  
  We continue with the computation stage, which consists of zero rounds, but takes up a substantial part of the network $A_f$.
  For a pattern $P' \in A$, denote by $\tau(P') = \tau^M_{[0,2n+m-1]^2}(0^{\Z^2 \setminus [0,2n+m-1]^2} \sqcup P' \sqcup P)$ the escape time of the head from the region $[0,2n+m-1]^2$.
  From Lemma~\ref{lem:CircuitSimulation} and Lemma~\ref{lem:FastEscape} we obtain a circuit of size $O(n^4 (C_1 + n^2))$ that computes the function $\rho^M(P' \sqcup P) \mapsto (P', \tau(P'))$ for $P' \in A$.
  By assumption, there is a circuit of size $C_2$ that computes the function $P' \mapsto (g(P'), g'(P'))$.
  We chain these circuits together to obtain a circuit $C$ of size $O(n^4 (C_1 + n^2) + C_2)$ that computes the function $\rho^M(P') \mapsto (g(P'), g'(P'))$.
  We apply Lemma~\ref{lem:CircuitAutomaton} to transform this circuit into a normed network $A_C$ of size $O((n^4 (C_1 + n^2) + C_2)^2)$.
  We connect the output terminals $o'(x)$ of the input switches $\bar{s}^{(m+2,1)}_{(a,b)}$ into the input terminals $i_j(x)$ of $A_C$ in a suitable way.
  Each output terminal $o_j$ of $A_C$ is connected to the input terminal $i_{j+1}(x)$ of the next switch, except the last one, which loops to the input terminal $\#$.
  The output terminal $\$$ is connected to the input terminal $i_0$ of $A_C$.
  The output terminals $o_j(0)$ and $o_j(1)$ are directed to a merge, with the latter running through a $(1,O(m^2))$-switch $\bar{s}^{(1,O(m^2))}_j$ and setting its bit to $1$, and the merge is directed to the next input terminal $i_{j+1}$ of $A_C$.
  On the last step, we do not connect the merge to anything yet.
  See Figure~\ref{fig:CompStage} for an illustration.
  
  We now have a collection of $(1,O(m^2))$-switches whose internal values encode the contents of the target pattern as well as the number $g'(P')$, and we call them the \emph{output switches}.
  They are used in the next stage of the construction.
  This concludes the computation stage.
  
  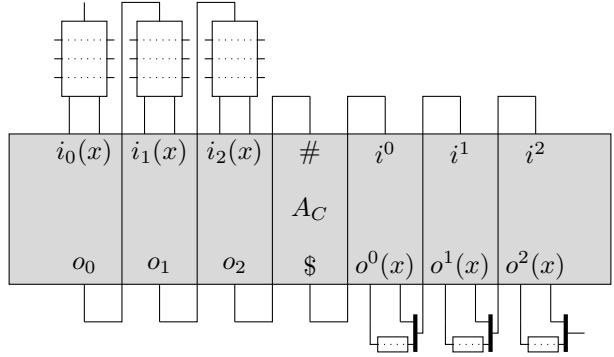
\begin{figure}[htp]
  \begin{center}
  \begin{tikzpicture}
  
    \draw [fill=black!15] (0,0) rectangle (8,2);
    \node at (4,1) {$A_C$};
    
    \foreach \x in {0.8,1.2, 1.8,2.2, 2.8,3.2,  4,  5, 6, 7}{
      \draw (\x,2) -- ++(0,0.5);
    }
    \node at (1,1.75) {$i_0(x)$};
    \node at (2,1.75) {$i_1(x)$};
    \node at (3,1.75) {$i_2(x)$};
    \node at (4,1.75) {$\#$};
    \node at (5,1.75) {$i^0$};
    \node at (6,1.75) {$i^1$};
    \node at (7,1.75) {$i^2$};
    
    \foreach \x in {1, 2, 3,  4,  4.8,5.2, 5.8,6.2, 6.8,7.2}{
      \draw (\x,0) -- ++(0,-0.5);
    }
    \node at (1,0.25) {$o_0$};
    \node at (2,0.25) {$o_1$};
    \node at (3,0.25) {$o_2$};
    \node at (4,0.25) {$\$$};
    \node at (5,0.25) {$o^0(x)$};
    \node at (6,0.25) {$o^1(x)$};
    \node at (7,0.25) {$o^2(x)$};
    
    \foreach \x in {1,2,3}{
      \draw (\x-0.3,2.5) rectangle (\x+0.3,3.5);
      \foreach \y in {2.75,3,3.25}{
        \draw (\x-0.3,\y) -- ++(-0.1,0);
        \draw [dotted] (\x-0.3,\y) -- (\x+0.3,\y);
        \draw (\x+0.3,\y) -- ++(0.1,0);
      }
    }
    \draw (1,3.5) -- (1,3.75);
    \foreach \x in {1,2}{
      \draw (\x,-0.5) -| (\x+0.5,3.75) -| (\x+1,3.5);
    }
    
    \foreach \x in {3,4}{
      \draw (\x,-0.5) -| (\x+0.5,2.5) -- (\x+1,2.5);
    }
    
    \foreach \x in {5,6,7}{
      \draw (\x-0.1,-0.7) rectangle (\x+0.3,-0.9);
      \draw [dotted] (\x-0.1,-0.8) -- (\x+0.3,-0.8);
      \draw (\x-0.2,-0.5) |- (\x-0.1,-0.8);
      \draw (\x+0.3,-0.8) -- (\x+0.4,-0.8);
      \draw [ultra thick] (\x+0.4,-0.4) -- (\x+0.4,-0.9);
      \draw (\x+0.2,-0.5) -- (\x+0.4,-0.5);
    }
    \foreach \x in {5,6}{
      \draw (\x+0.4,-0.65) -| (\x+0.5,2.5) -- (\x+1,2.5);
    }
    \draw (7.4,-0.65) -- ++(0.25,0);
  
  \end{tikzpicture}
  \end{center}
  \caption{The implementation of the computation stage in $A_f$.}
  \label{fig:CompStage}
  \end{figure}
  
  We describe the sculpt stage.
  Intuitively, it resembles the probe stage, but in reverse and with the roles of $0$ and $1$ inverted.
  Recall that after the previous stages, the clay occupies the coordinates $(n+m-5m^2,n+m-5m^2) + (i,j)$ for $j \in [0,5m^2-1]$ and $i \in [0,5m^2+j-1]$.
  Our goal is to remove some $1$s from the clay so that the output patterm $g(R)$ is formed at the coordinates $[n, n+m-1]^2$.
  At this point, the process $f$ has full information of $g(R)$.
  Let $E_0(R) \subset [0, m-1]^2$ be the set of coordinates $\vec{v}$ with $g(R)_{\vec{v}} = 0$.
  The idea is that for each $\vec{v} = (i,j) \in [0, m-1]^2$ in decreasing order of $i+j$, we do the following:
  \begin{enumerate}
  \item
    Remove rows from the south border of the clay or columns from its west border until its southwest corner is on the same diagonal as $\vec{v} + (n,n)$.
  \item
    If $\vec{v} \in E_0(R)$, move each $1$ on the diagonal path from $(n,n) + \vec{v}$ to the southwest corner of the clay one step to the southwest.
    Then move the $1$ that was previously at the corner of the clay out of the way.
    If $\vec{v} \notin E_0(R)$, send the head into the region $P$ the same number of times as we would in the first case, but along an empty row.
  \end{enumerate}
  
  \begin{figure}[htp]
  \begin{center}
  \begin{tikzpicture}[scale=0.5]
    
    \begin{scope}
    \foreach \x in {3,4,5,6,7}{
      \foreach \y in {3,4,5,6}{
        \fill [black!15] (\x,\y) rectangle ++ (1,1);
        \node at (\x+1/2,\y+1/2) {$1$};
      }
    }
  
    \draw (0,0) grid (8,7);
    \end{scope}
    
    \begin{scope}[xshift=10cm]
    \foreach \x in {4,5,6,7}{
      \foreach \y in {3,4,5,6}{
        \fill [black!15] (\x,\y) rectangle ++ (1,1);
        \node at (\x+1/2,\y+1/2) {$1$};
      }
    }
    \foreach \y in {4,5,6}{
      \fill [black!15] (3,\y) rectangle ++ (1,1);
      \node at (3+1/2,\y+1/2) {$1$};
    }
    
    \fill [black!15] (2,2) rectangle ++ (1,1);
    \node at (2+1/2,2+1/2) {$1$};
    
    \draw [densely dotted,>->] (2.5,-0.5) |- (-0.5,2.5);
  
    \draw (0,0) grid (8,7);
    \end{scope}
    
    \begin{scope}[yshift=-9cm]
    \foreach \x in {4,5,6,7}{
      \foreach \y in {3,4,5,6}{
        \fill [black!15] (\x,\y) rectangle ++ (1,1);
        \node at (\x+1/2,\y+1/2) {$1$};
      }
    }
    \foreach \y in {4,5,6}{
      \fill [black!15] (3,\y) rectangle ++ (1,1);
      \node at (3+1/2,\y+1/2) {$1$};
    }
    
    \fill [black!15] (3,1) rectangle ++ (1,1);
    \node at (3+1/2,1+1/2) {$1$};
    
    \draw [densely dotted,>->] (8.5,1.5) -| (3.5,-0.5);
  
    \draw (0,0) grid (8,7);
    \end{scope}
    
    \begin{scope}[xshift=10cm,yshift=-9cm]
    \foreach \x in {3,5,6,7}{
      \foreach \y in {3,4,5,6}{
        \fill [black!15] (\x,\y) rectangle ++ (1,1);
        \node at (\x+1/2,\y+1/2) {$1$};
      }
    }
    \foreach \y in {2,3,5,6}{
      \fill [black!15] (4,\y) rectangle ++ (1,1);
      \node at (4+1/2,\y+1/2) {$1$};
    }
    
    \draw [densely dotted,>->] (8.5,1.5) -| (3.5,3.5) -- (-0.5,3.5);
  
    \draw (0,0) grid (8,7);
    \end{scope}
    
    \begin{scope}[yshift=-18cm]
    \foreach \x in {3,6,7}{
      \foreach \y in {3,4,5,6}{
        \fill [black!15] (\x,\y) rectangle ++ (1,1);
        \node at (\x+1/2,\y+1/2) {$1$};
      }
    }
    \foreach \y in {2,3,4,5,6}{
      \fill [black!15] (4,\y) rectangle ++ (1,1);
      \node at (4+1/2,\y+1/2) {$1$};
    }
    \foreach \y in {3,4,6}{
      \fill [black!15] (5,\y) rectangle ++ (1,1);
      \node at (5+1/2,\y+1/2) {$1$};
    }
    
    \draw [densely dotted,>->] (-0.5,5.5) -| (5.5,-0.5);
  
    \draw (0,0) grid (8,7);
    \end{scope}
    
    \begin{scope}[xshift=10cm,yshift=-18cm]
    \foreach \x in {3,5,7}{
      \foreach \y in {3,4,5,6}{
        \fill [black!15] (\x,\y) rectangle ++ (1,1);
        \node at (\x+1/2,\y+1/2) {$1$};
      }
    }
    \foreach \y in {2,3,4,5,6}{
      \fill [black!15] (4,\y) rectangle ++ (1,1);
      \node at (4+1/2,\y+1/2) {$1$};
    }
    \foreach \y in {3,4,5}{
      \fill [black!15] (6,\y) rectangle ++ (1,1);
      \node at (6+1/2,\y+1/2) {$1$};
    }
    
    \draw [densely dotted,>->] (-0.5,6.5) -| (6.5,-0.5);
  
    \draw (0,0) grid (8,7);
    \end{scope}
  
  \end{tikzpicture}
  \end{center}
  \caption{The head of $M$ extracts a $1$ from the corner of the clay, creating a movable ``hole''.}
  \label{fig:SculptStage}
  \end{figure}
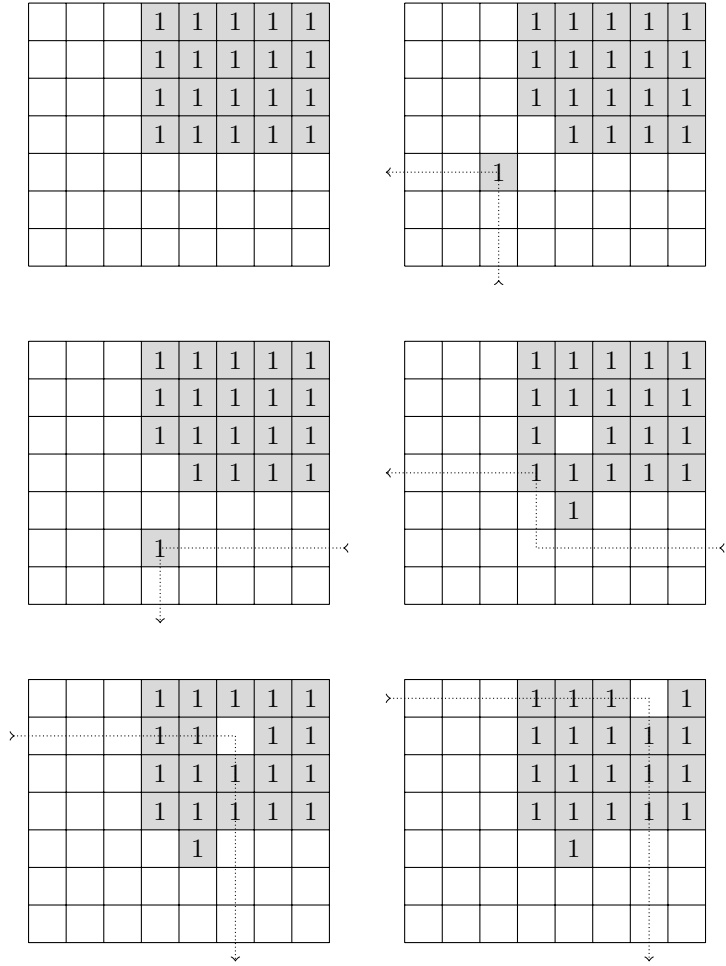

  \begin{figure}[ht]
    \begin{center}
      \begin{tikzpicture}

        \fill [black!15] (0.7,1.2) -- (5.2,1.2) -- (8,4) -- (0.7,4) -- cycle;
        \fill [black!30] (0,0) -- (4,0) -- (5.2,1.2) -- (0.7,1.2) -- (0.7,4) -- (0,4);
        
        \draw (0,0) -- (4,0) -- (8,4) -- (0,4) -- cycle;
        \draw (2,4) -- (2,3) -- (3,3) -- (3,4);

        \fill (2,3) circle (0.05);
        \node [below left] at (2,3) {$(n,n)$};
        
        \draw [dashed] (0,0) -- (3,3);
        \draw [dashed] (4,0) -- (4,4);
        \draw [dashed] (0.7,1.2) -- (2.8,3.3);

        \node [fill=black!15,above right] at (2.8,3.3) {$(n,n) + \vec{v}_{i,j}$};
        \fill (2.8,3.3) circle (0.05);

        \draw [decorate,decoration={brace,amplitude=5pt}] (2,4.3) -- node [midway,above=0.3cm] {$m$} (3,4.3);
        \draw [decorate,decoration={brace,amplitude=5pt}] (3,4.3) -- node [midway,above=0.3cm] {$m$} (4,4.3);
        \draw [decorate,decoration={brace,amplitude=5pt}] (1.7,3) -- node [midway,left=0.3cm] {$m$} (1.7,4);
        \draw [decorate,decoration={brace,amplitude=5pt}] (-.3,0) -- node [midway,left=0.3cm] {$5m^2$} (-.3,4);
        \draw [decorate,decoration={brace,amplitude=5pt}] (4,-.3) -- node [midway,below=0.3cm] {$5m^2$} (0,-.3);
        
      \end{tikzpicture}
    \end{center}
    \caption{The sculpting process, not drawn to scale. The dark gray part of the clay is removed before the coordinate $\vec{v}_{i,j}$ is considered.}
    \label{fig:SculptTheClay}
  \end{figure}
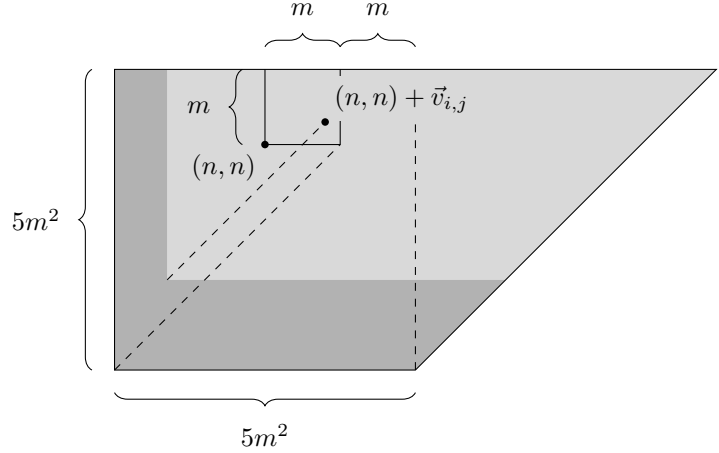

  We describe the sculpt process and its implementation in the normed network $A_f$ in more detail.
  The process is divided into $m^2$ substages, one for each choice of $(i,j) \in [0, m-1]^2$, and substage $i,j$ consists of $N'_{i,j} = O(m^6)$ rounds.
  Substage $i,j$, begins with the removal of some number of southmost rows or westmost columns from the clay.
  As in the probe stage, this is done by moving each $1$ a distance of $O(m^4)$, taking $O(m^4)$ rounds; this is enough as there are a total of $O(m^4)$ cells in the clay.
  Moving each $1$ of a row or column of length $O(m^2)$ for a distance of $O(m^4)$ steps takes $O(m^6)$ rounds.
  If we choose the order of $(i,j) \in [0, m-1]^2$ to be increasing in $j$ when $j+i$ is even and decreasing in $j$ otherwise, it suffices to remove at most two rows or columns from the clay on each substage.

  Next, we check whether $(i,j) \in E_0(R)$.
  In $A_f$, this is achieved by routing the token through the output switch that corresponds to $(i,j)$.
  Since this switch has $O(m^2)$ input terminals and corresponding pairs of output terminals, we can perform this check on each substage $i,j$.
  Based on the result, the token is sent to one of two terminals, so we can send the head of $M$ into $P$ from one of two coordinates.
  If $(i,j) \in E_0(R)$, we send the head from the south, then east and east again, as illustrated in Figure~\ref{fig:SculptStage}, in order to move a $1$ out of the southwest corner of the clay.
  For the next $O(m^2)$ rounds, we send the head from the west border of the clay in order to move this $0$ one step to the northeast within the clay, until it reaches $(i,j)$.
  In the case $(i,j) \notin E_0(R)$, the above process is replaced by the head repeatedly entering $P$ on some row that contains only $0$s, say the northmost row.
  This concludes substage $i,j$.
  The sculpting process is illustrated in Figure~\ref{fig:SculptTheClay}.
  
  Finally, there is an additional substage in which all remaining $1$s outside the square $[n, n+m-1]^2$ are transported out of the region $[0, 2n+m-1] \times [n-3, n+m+2] \cup [n-3, n+m+2] \times [0, 2n+m-1]$ in order to comply with the last condition of the claim.
  This substage takes $O(m^8)$ additional rounds, and is implemented similarly to the probe stage, moving each $1$ diagonally for $O(m^4)$ steps (we can assume that the remaining part of the clay is still surrounded by an annulus of $0$ of thickness $O(m^4)$).
  All in all, the sculpt stage takes $O(m^8)$ rounds and requires $O(m^8 n^4)$ primitive components in $A_f$.
  
  After these five stages, the output region $[n, n+m-1]^2$ contains a translated version of the pattern $g(R)$.
  It remains to guide the token to the correct output terminal $o_k(\vec{b}, t)$, which also contains the number $t = g'(P') = O(C_2)$ determined during the computation stage.
  For each value of $t$ there is a $(1,4n^2)$-switch among the output switches that stores it, and we now use these switches to guide the token to the correct output terminal after $\vec{b}$ has been determined.
  A choice of $n = O(m^4)$ is enough to guarantee that the construction can be carried out, since then we have enough room to transport any $1$s out of the way during the probe and sculpt stages.
  The size of $A_f$ is then $O(m^8 n^4 + (n^4 C_1 + C_2 + n^2)^2) = O(m^{32} + (m^8 C_1 + C_2 + m^6)^2)$, and the depth of $f$ is $O(m^8)$.
  It is clear from the construction that $(\mathcal{P}, f)$ has consistent timing.
\end{proof}

\section{Physical Universality}

We are now ready to prove our main result, the physical universality of $M$.

\begin{theorem}
\label{thm:MainM}
The Turing machine $M$ is efficiently physically universal in the moving head model.
\end{theorem}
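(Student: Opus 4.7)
The strategy is to chain Lemma \ref{lem:ProcessCanDo} with Lemma \ref{lem:ConcreteRealization}. Given $k,m \in \N$ and tuples $(P_0,\ldots,P_{k-1}), (R_0,\ldots,R_{k-1}) \in (\npats{m}{Q}{\Sigma})^k$ with the $P_j$ having disjoint $0$-orbits, I would set $A = \{P_0,\ldots,P_{k-1}\}$ and define $g : A \to \Sigma^{[0,m-1]^2}$ so that $g(P_j)$ encodes $R_j$'s tape, with a few auxiliary output bits of the sculpt stage encoding $R_j$'s head coordinate and internal state. Setting $g'$ to a constant $T$ large enough to dominate every intermediate quantity guarantees that the last round of the border process takes the same number of steps regardless of the input. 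Let $C_1$ be the circuit complexity of $A$ and $C_2$ the circuit complexity of $P_j \mapsto R_j$; then Lemma \ref{lem:ProcessCanDo} yields $n = O(m^4)$, a pattern $\tilde P$ on $[0,2n+m-1]^2 \setminus [n,n+m-1]^2$, and a border process $(\mathcal P, f)$ of depth $O(m^8)$ and abstract complexity polynomial in $m$, $C_1$, $C_2$, with consistent timing.

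Next I would apply Lemma \ref{lem:ConcreteRealization} to $(\mathcal P, f)$ to obtain a concrete realization $w \in \Sigma^{\Z^2 \setminus [0,2n+m-1]^2}$ supported in a polynomial-size square. After a translation identifying $[n,n+m-1]^2$ with the target region $[0,m-1]^2$ of the PU definition, the partial configuration $x := \tilde P \sqcup w$ is the one we are after. For each $j$, the evolution of $P_j \sqcup x$ is governed by the border process: the head repeatedly exits $[0,m-1]^2$ and is redirected according to $f$, until after exactly $t$ steps---where $t$ depends only on the additive constant of the concrete realization and on $g' \equiv T$---the contents of the square are $R_j$. Disjointness of $0$-orbits is what makes the input alphabet of Lemma \ref{lem:ProcessCanDo} well-defined (and enables the simulation of $A_f$ via Lemma \ref{lem:CircuitSimulation}), while the polynomial bounds on $n$, $t$, and the support of $x$ yield efficiency.

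The step I expect to be the main obstacle is making the \emph{head} position and internal state inside $[0,m-1]^2$ at time $t$ agree with $R_j$, since Lemma \ref{lem:ProcessCanDo} only directly controls the tape. I plan to address this by exploiting the fact that the last entry coordinate $\vec b'_k$ produced by $f$ is chosen by the network $A_f$ and can therefore depend on the input through the output switches created in the computation stage; the head then traverses the clean annulus around $[n,n+m-1]^2$ (the zeroness of which is guaranteed by the final clause of Lemma \ref{lem:ProcessCanDo}) along a deterministic trajectory that deposits it at the desired cell in the desired state. Uniformity of the step count across the different possible choices of $\vec b'_k$ can be enforced with the delay gadgets of Figure \ref{fig:Delays}, and the constant-size overhead does not disturb the polynomial bounds. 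Consistent timing of $f$ together with the constant choice of $g'$ then delivers a single time $t$ that works for all $P_j$ simultaneously, completing the proof of efficient physical universality in the moving head model.
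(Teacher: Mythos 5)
Your skeleton (Lemma~\ref{lem:ProcessCanDo} followed by Lemma~\ref{lem:ConcreteRealization}) is the same as the paper's, but the two ingredients that make that skeleton actually prove the theorem are missing, and your substitutes for them do not work. The first gap is the head reconstruction. The sculpt stage only produces a headless tape pattern, and your plan to ``deposit'' the head at the desired interior cell in the desired state by steering it along the clean annulus cannot succeed: the head can only enter $[n,n+m-1]^2$ through the border and then evolves by the rules of $M$, interacting with (and generally disturbing) the sculpted bits of $R_j$ as it turns; there is no trajectory that parks it at an arbitrary interior coordinate in an arbitrary state while leaving an arbitrary surrounding tape intact, and ``auxiliary output bits of the sculpt stage'' give no mechanism for this. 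The paper's solution is to run the \emph{inverse} machine $M^{-1}$ starting from $g(R)$ padded by $1$s: by the $0/1$-reversed analogues of Lemmas~\ref{lem:FastEscape} and~\ref{lem:NoEscape}, $M^{-1}$ escapes in $O(m^4)$ steps at some border coordinate $b(R)$, leaving a headless pattern $g'(R)$. The sculpt stage is then asked to build $g'(R)$ (padded by $1$s, which is exactly why the inverse escape applies), the final entrance of the border process is made at $b(R)$, and forward evolution of $M$ for the escape time reconstructs $g(R)$ exactly, head state and position included. Both $b(R)$ and this escape time are computable by a small circuit via Lemma~\ref{lem:CircuitSimulation}, and the case where $g(R)$ contains no head is handled separately by aiming the final entrance at a coordinate from which the head does not reach the region by time $t$.

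The second gap is timing. Taking $g'\equiv T$ constant does not make the total time uniform: the number of steps before the head \emph{first} leaves the region (which depends on $P_j$) and the number of steps needed \emph{after} the final entrance to turn $g'(R)$ into $g(R)$ (which depends on $R_j$) both lie outside the window that the border process and its concrete realization control, and fixed delay gadgets in the wiring cannot compensate for quantities that vary with the input. This is precisely why Lemma~\ref{lem:ProcessCanDo} allows an input-dependent time output and why the paper sets $g'(R) = C - \tau'(R)$, where $\tau'(R)$ is the sum of the initial escape time of $M$ from $R$ and the escape time of $M^{-1}$ from $g(R)$; the concrete realization then converts this computed value into an input-dependent delay before the last entrance, so that a single $t = C' + C + 2n + 2$ works for every pattern in the tuple. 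Without the inverse-machine step and this variable time correction, your argument does not produce a single partial configuration and a single time implementing $P_j \mapsto R_j$ for all $j$.
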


\begin{proof}
Let $m \in \N$, let $A \subset \npats{m}{Q}{\Sigma}$ be a set of patterns with disjoint $0$-orbits containing the head of $M$, and let $g : A \to \pats{m}{Q}{\Sigma}$ be a function.
Let $A' \subset A$ be the set of those patterns $R \in A$ for which $g(R)$ contains the head of $M$.
For each $R \in A'$, Lemma~\ref{lem:FastEscape} and its analogue for the inverse machine $M^{-1}$ states that the escape time of $M$ from $R$ when it is surrounded by $0$s, and the escape time of $M^{-1}$ from $g(R)$ when it is surrounded by $1$s, are both $O(m^4)$.
Denote their sum by $\tau'(R)$, and for $R \in A \setminus A'$, define $\tau'(R)$ as just the escape time of $M$ from $R$.
For $R \in A'$, let $b(R) \in B_m$ be the coordinate where $M^{-1}$ escapes from $g(R)$, let $g'(R)$ be the contents of $[-1, m]^2$ as this happens.
For $R \in A \setminus A'$, we (somewhat arbitrarily) define $b(R) = (-1,0)$ and $g'(R) = g(R)$.
Lemma~\ref{lem:CircuitSimulation} lets us compute $\tau'$ and $b$ by a circuit of polynomial size in $m$ and the circuit complexity of $g$.

For $R \in A$, let $p(R) \in \npats{m+2}{Q}{\Sigma}$ be $R$ padded with a thickness-$1$ border of $0$s on all sides, and let $p'(R) \in \pats{m+2}{Q}{\Sigma}$ be $g'(R)$ padded similarly by $1$s.
By Lemma~\ref{lem:ProcessCanDo}, there exists a number $n$, a pattern $P \in \Sigma^{[0, 2n+m+1]^2 \setminus [n, n+m+1]^2}$ and a border process $(\mathcal{P}, f)$ of size $2n+m+2$, polynomial depth and abstract complexity, and consistent timing that transforms a pattern $P \sqcup p(R) \in \mathcal{P}$ for $R \in A$ into another pattern containing $p'(R)$ at $[n, n+m+1]^2$ and the head pointed toward $b(R) + (n+1, n+1)$ on the border of $P$, and also outputs the number $C - \tau'(R)$ for some constant $C \geq \tau'(R)$.
Lemma~\ref{lem:ConcreteRealization} gives us a concrete realization $y \in \Sigma^{\Z^2 \setminus [0, 2n+m+1]^2}$ for $(\mathcal{P}, f)$ where all $1$s are within polynomial distance from the origin.

In this concrete realization, the number of steps $M$ takes between leaving $P$ for the first time and entering it for the final time is $C' + C - \tau'(R)$ for some constant $C'$.
Since the pattern $R$ is surrounded by $0$-cells in $P$ on all four sides, as is $g'(R)$ when the process ends, the number of steps taken by the head between leaving $[n, n+m-1]^2$ for the first time and re-entering for the last time is $C' + C - \tau'(R) + 2n + 2$.
We choose $t = C' + C + 2n + 2$ as the number of time steps to implement $g$.
Note that the padding provided by $p$ and $p'$ does not affect the movement of the head as it enters or leaves the square $[n, n+m-1]^2$.
If $R \in A'$, then we have $M^t(y \sqcup P \sqcup R)|_{[n, n+m-1]} = g(R)$ since the additional $\tau'(R)$ steps account exactly for the head leaving $[n, n+m-1]$ for the first time, and transforming $g'(R)$ into $g(R)$ after re-entering for the last time.
If $R \in A \setminus A'$, then the same holds because when the head is sent into $P$ for the last time, it is directed toward the coordinate $b(R) + (n,n)$ but will not enter $g'(R)$ at time $t$.
Since $C'$, $C$ and $n$ are all polynomial in $m$ and the circuit complexities of $A$ and $g$, the claim follows.
\end{proof}

If a Turing machine is (efficiently) physically universal in the moving head model, then the same holds for the moving tape model.

\begin{corollary}
There exists a two-dimensional reversible Turing machine which is efficiently physically universal in both the moving head model and the moving tape model.
\end{corollary}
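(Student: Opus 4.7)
The plan is to derive the corollary from Theorem~\ref{thm:MainM} via a simple reduction that shows moving head efficient PU implies moving tape efficient PU for any reversible Turing machine. Since the machine $M$ constructed in Section~\ref{sec:TheMachine} is reversible and, by Theorem~\ref{thm:MainM}, efficiently PU in the moving head model, the corollary will follow immediately by applying this reduction to $M$.

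Given a moving tape instance on side length $m$ with disjoint-$0$-orbit inputs $((q_j, P_j))_{j<k}$ and targets $((p_j, R_j))_{j<k}$, I would first fix a vector $\vec w \in \Z^2$ such that $[0, m-1]^2$ and $\vec w + [0, m-1]^2$ are disjoint, and take a square domain $D$ containing both of them. Next I would define moving head input patterns $P'_j \in \npats{D}{Q}{\Sigma}$ by placing the head in state $q_j$ at the origin with $P_j$ on $[0, m-1]^2$ and $0$s on $D$ elsewhere, and moving head target patterns $R'_j \in \npats{D}{Q}{\Sigma}$ by placing the head in state $p_j$ at $\vec w$ with $R_j$ on $\vec w + [0, m-1]^2$ and $0$s on $D$ elsewhere.

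The structural fact I would use is the natural semiconjugacy $\phi(\stateat{q}{\vec v}, x) = (q, \tau_{\vec v}(x))$ from the moving head dynamics onto the moving tape dynamics, which satisfies $\phi \circ M_{\mathrm{head}} = M_{\mathrm{tape}} \circ \phi$. Disjointness of the moving tape orbits of the completed inputs pulls back to disjointness of the moving head orbits of the corresponding head-at-origin configurations, which coincide with the $0$-completions of the $P'_j$ outside $D$; hence the $P'_j$ have disjoint $0$-orbits in the moving head sense. Applying Theorem~\ref{thm:MainM} produces $y \in \Sigma^{\Z^2 \setminus D}$ and $t \in \N$ with $M^t(P'_j \sqcup y)|_D = R'_j$ for every $j$. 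Setting $x := y \sqcup 0^{D \setminus [0, m-1]^2}$, the intertwining gives $M^t(q_j, P_j \sqcup x) = (p_j, \tau_{\vec w}(z_j))$, where $z_j$ denotes the moving head tape at time $t$, and restricting to $[0, m-1]^2$ reads off $z_j$ on $\vec w + [0, m-1]^2$, which equals $R_j$ by construction.

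For the efficiency clause, the set $\{P'_j\}$ and the map $P'_j \mapsto R'_j$ are obtained from the original moving tape data by padding with zeros, translating by the fixed vector $\vec w$, and fixing the head positions and states at the endpoints; all of these operations inflate circuit complexity by only $\mathrm{poly}(m)$. The polynomial time bound furnished by Theorem~\ref{thm:MainM} for the moving head problem therefore transfers directly to the moving tape problem. There is no substantive obstacle here: the only delicate point is the disjoint-orbit check, and the semiconjugacy $\phi$ disposes of it cleanly; the rest is bookkeeping.
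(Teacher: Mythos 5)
Your proposal is correct and follows essentially the same route as the paper: the corollary is obtained there by the (unproved) observation that moving-head (efficient) PU implies moving-tape (efficient) PU, applied to the machine $M$ of Theorem~\ref{thm:MainM}. Your semiconjugacy $\phi(\stateat{q}{\vec v},x)=(q,\tau_{\vec v}(x))$ together with the translated target square to pin down the final head position and state is exactly the natural way to fill in that implication, and the disjoint-orbit and polynomial-overhead checks are handled correctly.
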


A Turing machine with one state that always moves to the right is obviously topologically mixing in the moving tape model.
In the moving head model, it is unknown whether Turing machines can be topologically mixing.
The one-dimensional case is the most interesting one, but our machine provides at least a two-dimensional example, since it is easy to see from the construction that we can freely choose the time parameter $t$ as long as it is large enough (topological transitivity follows directly as a special case of physical universality).

\begin{theorem}
There exists a reversible Turing machine which is mixing of all finite orders in the moving head model.
\end{theorem}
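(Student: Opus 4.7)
The plan is to deduce $k$-mixing for every $k \geq 1$ from the physical universality construction of Theorem~\ref{thm:MainM} by iterating it. By a standard reduction to cylinder sets, it suffices to show that for every $k$, every tuple of head-containing patterns $P_0, P_1, \ldots, P_k \in \npats{m}{Q}{\Sigma}$ on a common central region $D = [0, m-1]^2$, and every sufficiently large $N$, there exist a configuration $x \in X_Q \times \Sigma^{\Z^2}$ and times $t_1 < \cdots < t_k$ with $t_1 \geq N$ and $t_{i+1} - t_i \geq N$ such that $x|_D = P_0$ and $M^{t_i}(x)|_D = P_i$ for each $i$. Open sets that locate the head outside $D$ or leave it unspecified can be handled by enlarging $D$ so that it contains the prescribed head position (or by placing the head in a fixed distant location during the computation phases of the construction, where it is always outside $D$ anyway). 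Disjoint $0$-orbits, needed for applying PU, can likewise be arranged by further enlarging $D$.

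The main step is to extend the border-process construction of Lemma~\ref{lem:ProcessCanDo} from a single escape-probe-transport-computation-sculpt cycle to $k$ such cycles in sequence. The $i$-th cycle takes the current contents of $D$ (namely $P_{i-1}$, produced by the previous cycle) and writes $P_i$ into $D$. Each cycle's circuitry lives in a disjoint polynomial-size region of the plane, and between cycles Lemmas~\ref{lem:FastEscape} and~\ref{lem:NoEscape} guarantee that once $P_i$ is formed the head leaves $D$ promptly and then travels in a straight line, so the outermost catcher system of cycle $i+1$'s circuitry (as in Lemma~\ref{lem:ConcreteRealization}) intercepts it reliably. Concatenating $k$ such stages produces a single large border process of depth $O(k \cdot \mathrm{poly}(m))$ with consistent timing, whose concrete realization by Lemma~\ref{lem:ConcreteRealization} is the desired exterior configuration.

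To achieve the prescribed time gaps $t_{i+1} - t_i \geq N$, the plan is to use the same padding techniques available inside one PU application: delay gadgets in horizontal wires (Figure~\ref{fig:Delays}) and trivial merges in normed networks allow the time spent in each cycle to be inflated to any value above a polynomial baseline in $m$. Consistent timing of the overall border process then guarantees that each $P_i$ appears in $D$ at a deterministic time that depends only on the construction, not on the intermediate states. The main obstacle is purely organizational: one must arrange $k$ disjoint cycles so that the head is routed cleanly from the end of one to the start of the next. This is addressed by laying out the cycles in disjoint polynomial-size regions far from $D$, with dedicated wiring that captures the head as it exits $D$ at the end of cycle $i$ and delivers it to cycle $i+1$'s entry point; the exhausted circuitry of earlier cycles is inert for subsequent passes since each catcher is single-use and all residual $1$s from sculpting can be displaced far enough not to obstruct later head trajectories.
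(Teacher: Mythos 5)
Your overall strategy---iterate the physical-universality construction, one cycle per cylinder, with the freedom in the time parameter used to hit prescribed gaps---is exactly what the paper intends; its own justification is just the remark before the theorem that in the proof of Theorem~\ref{thm:MainM} the time $t$ can be chosen freely once it is large enough. The reduction to cylinders, the use of singleton input sets (which makes disjointness of orbits and circuit complexity non-issues), and the delay-gadget timing argument are all fine.

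The gap is in how you hand the head from one cycle to the next. First, after $P_i$ is written in $D$ the head does \emph{not} travel in a straight line upon leaving $D$: the straight-line behaviour (the remark after Lemma~\ref{lem:NoEscape}) applies only after the head has left the bounding rectangle of \emph{all} the $1$s, and that rectangle now contains cycle $i$'s spent machinery---its partially activated catcher systems, the network pattern $P_f$, wires, and probe/sculpt garbage. Second, that machinery is not ``inert'' and cannot simply be ``displaced far enough not to obstruct'': the used catcher systems of cycle $i$ encircle $D$, so every row and column along which cycle $i+1$ would shoot the head toward $D$ still carries their leftover $1$s, and a single such $1$ deflects the head. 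Formally, a border process (hence Lemma~\ref{lem:ProcessCanDo} and Lemma~\ref{lem:ConcreteRealization}) requires all $1$s of the input pattern to lie inside the inner square and the exterior to be exactly the designed realization, which fails for a second cycle built around the same small square $D$. The repair is to let cycle $i+1$'s inner square contain everything used so far: its input set is the singleton consisting of the fully determined contents of that larger square, its target is any pattern agreeing with $P_{i+1}$ on $D$ (headless on $D$ if the cylinder so demands), and its pre-activated initial catcher system is placed around the larger square. Then Lemma~\ref{lem:FastEscape} bounds the time for the head to exit that square, Lemma~\ref{lem:NoEscape} keeps the old $1$s inside it, and only after this exit does the head run straight into the new catcher. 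With that amendment (and noting that the $9$- and $11$-step delays are coprime, so every sufficiently large cycle duration is realizable) your induction does yield mixing of all finite orders.
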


Constructing a one-dimensional physically universal Turing machine seems more difficult.
If one is to mimic our construction, information from the initial pattern needs to be fetched by some type of gadget rather than by simply shooting the head in the correct cell.
We conjecture that such Turing machines exist, but we do not have a candidate.

\section{Modifications}
\label{sec:Further}

We can make slight modifications to our physically universal machine $M$ and obtain machines with different universality properties.

\begin{theorem}
There exists a two-dimensional reversible Turing machine that is physically universal in the moving tape model, but not the moving head model.
\end{theorem}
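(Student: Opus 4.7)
The plan is to modify $M$ into a machine $M'$ by attaching to it an auxiliary tape factor that is pointwise invariant under the moving head dynamics but interacts non-trivially with the tape shift in the moving tape dynamics. Concretely, I extend the tape alphabet to $\Sigma \times \{0,1\}$ and define $M'$ to act as $M$ on the first coordinate while never modifying the second coordinate. For the construction to succeed in the moving tape model, I additionally allow the second-coordinate neighborhood values at the head's position to influence (but not to be modified by) $M'$'s transition; any such choice that is a reversible function of the frozen second-coordinate neighborhood yields a reversible machine.

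For the failure of physical universality in the moving head model, the key observation is that the second coordinate of each cell is preserved at every step, hence is a conserved quantity of the dynamics. Consequently no background and no time can transform an initial pattern whose second coordinate on $[0,m-1]^2$ is identically $0$ into a target pattern whose second coordinate there contains a $1$, and a one-tuple with such data directly violates physical universality.

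For physical universality in the moving tape model I reduce to physical universality of $M$ on the first coordinate. Given a disjoint $0$-orbit tuple $(q_j, (P_j, S_j)) \to (p_j, (R_j, S'_j))$, I apply Theorem~\ref{thm:Main} to obtain a first-coordinate background $y_1$ and a common time $t$ realizing the first-coordinate transformations $(q_j, P_j) \to (p_j, R_j)$. In the moving tape model each run is accompanied by a net tape shift $\vec W_j$ determined by the first-coordinate trajectory of the $j$th input, and the second coordinate on $[0,m-1]^2$ after $t$ steps equals the initial second coordinate on $[0,m-1]^2 + \vec W_j$. I then choose the second-coordinate background $y_2$ so that its restriction to each image region $[0,m-1]^2 + \vec W_j$ equals $S'_j$.

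The main obstacle is the compatibility of these constraints on $y_2$: in the naive variant where $M'$ is literally $M \times \mathrm{id}$, two inputs sharing the same first-coordinate pattern but differing in their second coordinate have disjoint orbits yet produce identical shifts $\vec W_j$, forcing the corresponding targets $S'_j$ to coincide, which the disjoint-orbit hypothesis does not guarantee. The coupling introduced above is precisely what resolves this degeneracy: with a suitable reversible coupling, distinct second-coordinate neighborhoods steer the head differently, so that disjoint-orbit inputs produce pairwise-distinct shifts $\vec W_j$ and the constraints on $y_2$ are imposed on pairwise-separated regions. Exhibiting such a coupling — in particular, verifying that it can be made fine enough to distinguish all disjoint-orbit cases without modifying the second coordinate — is the principal technical step, which I expect to handle by augmenting the PU construction of Theorem~\ref{thm:Main} with a short ``reader'' stage in which the head samples the second coordinate and routes itself to a $j$-dependent parking position before time $t$.
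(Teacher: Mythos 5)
Your negative direction (the frozen second layer is a conserved quantity, so a single pattern whose second layer must change cannot be transformed, killing PU in the moving head model) is exactly the paper's argument, and your diagnosis that the plain product $M \times \mathrm{id}$ fails in the moving tape model because inputs sharing a first layer produce identical shifts is correct. But the positive direction has two genuine gaps. First, your claim that \emph{any} coupling in which the read-only layer influences the transition ``yields a reversible machine'' is false: if the head's movement branches on read-only data that is then left behind, injectivity can fail (e.g.\ a rule ``move east if the frozen bit under the head is $0$, north if it is $1$'' sends two distinct configurations with heads at $(0,0)$ and $(1,-1)$ to the same image). Reversibility has to be engineered; the paper does it by recording the read into the state, adding an activation bit to get $Q' = Q \times \{1,2\}$ that is toggled (an involution, before the move) only when both layers carry a $1$ under the head and all eight first-layer neighbours carry $1$s, and letting deactivated heads move in a straight line without writing.

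Second, the reduction ``apply Theorem~\ref{thm:Main} to the first coordinates'' is not available: disjointness of the $M'$-orbits of the full patterns $(q_j,(P_j,S_j))$ does not give disjointness of the $0$-orbits of the first-coordinate patterns $(q_j,P_j)$ under $M$ (your own degenerate example already has \emph{equal} first coordinates), and once the coupling can fire, the head's trajectory is no longer governed by $M$, so the background $y_1$ and time $t$ from Theorem~\ref{thm:Main} do not apply verbatim. Moreover the target depends on the second layer and the activation bit, so the computation stage must be able to read them; this reading machinery is precisely the unproven core of your sketch, not a ``short reader stage'' bolted onto Theorem~\ref{thm:Main}. The paper's proof instead reworks the moving-tape construction directly: the coupling condition is chosen so that it never fires during the sparse gadget dynamics (so Lemmas~\ref{lem:FastEscape} and~\ref{lem:NoEscape} still apply), a possibly deactivated exiting head is reactivated by width-$3$ stripes of $1$s on both layers behind the catcher arrays, the second layer is probed cell by cell by transporting a $3\times 3$ block of first-layer $1$s over each position, and after full information is gathered the head is routed into one of $|A|$ hard-coded patterns whose location supplies the desired second-layer contents of the output window. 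This costs efficiency (the paper explicitly notes the construction is not efficiently PU), whereas your sketch implicitly promises a polynomial-size reduction it does not justify.
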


\begin{proof}[sketch]
Define a Turing machine $M'$ as follows.
The tape alphabet is $\Sigma = \{0, 1\}^2$, and the state set is $Q' = Q \times \{1, 2\}$, where $Q$ is the state set of $M$.
The idea is that $M'$ has two binary tapes, one of which is analogous to that of $M$ and the other is immutable but affects the movement of the head.
The machine behaves as follows:
\begin{enumerate}
\item If the machine is in state $(q, b)$, and there are $1$-symbols on both layers of the tape under the head, and $1$-symbols on the first layer in all eight neighbors, then the state becomes $(q, 3-b)$.
\item After this, if the machine is in state $(q, 1)$, then it takes one step forward, retains its state and does not modify the tapes. Intuitively, in a state $(q,1)$ the machine is partially deactivated.
\item If the machine is in state $(q, 2)$ instead, then it behaves as $M$ would on the first layer of the tape, reading and modifying it, then moving into a new position and assuming a new state $(q', 2)$.
\end{enumerate}
It is immediately clear that $M'$ is not physically universal in the moving head model, since it cannot modify the second layer of the tape.
In dynamical systems terms, the second tape is a nontrivial invariant factor, which prevents even transitivity.
Lemmas~\ref{lem:FastEscape} and~\ref{lem:NoEscape} apply to $M'$ with only minor modifications.

We sketch the proof of physical universality in the moving tape model.
Consider a set of patterns $A \subset \npats{m}{Q'}{\Sigma}$ and a function $g : A \to \pats{m}{Q'}{\Sigma}$.
As with $M$, we can construct a catcher system around the square $[0, m-1]$ that redirects the head after its exit.
In case the head exits in some state $(q,1)$, it cannot be redirected, so we add width-$3$ stripes of $1$-cells on both layers behind the four catcher arrays that change the state to $(q,2)$, and add a second catcher system that is then able to redirect the head.
At the same time we obtain full information about its state.

After catching the head, we repeatedly probe the square $[0, m-1]^2$ to obtain the positions of $1$s on the first layer, as in the probe stage of the proof of Lemma~\ref{lem:ProcessCanDo}.
Since the head will always see a $0$ in some neighboring coordinate, it will not change its state into any $(q, 2)$ during this process, ignoring the second layer and behaving like $M$ instead.
Next, on each coordinate of $[0, m-1]^2$ in turn we transport a $3 \times 3$ pattern of $1$-cells on the first layer and send the head through it to test whether there is a $1$ on the second layer, then move these $1$s out of the way again.

Once we have obtained full information about the input pattern, the head is redirected into one of $|A|$ different patterns, one for each $R \in A$, which evolves into $g(R)$ after a certain number of steps when the head is sent into it through a certain border coordinate.
We can control the number of steps taken by the head of $M'$ so that the total number of steps required for this does not depend on the input pattern.
This construction shows that $M'$ is physically universal in the moving tape model.
\end{proof}

Note that the above proof does not show $M'$ to be efficiently physically universal, since the number of patterns we insert into the gadget may be exponential in $m$.
It remains an open problem whether there exists a Turing machine that is efficiently physically universal in the moving tape model, but not in the moving head model.

Our machine $M$ is allowed to inspect several tape cells simultaneously, which is not the case for a classical Turing machine.
It is possible to simulate general reversible Turing machines by classical Turing machines in the sense of \cite{BaKaSa16}.
We sketch the proof that this can be done in a way that preserves physical universality.

\begin{theorem}
There exists a two-dimensional classical reversible Turing machine that is physically universal.
\end{theorem}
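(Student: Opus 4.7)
The plan is to realize the classical machine $M^c$ as a block-level simulator of the physically universal machine $M$ from Section~\ref{sec:TheMachine}, following the general reversible-to-classical conversion of \cite{BaKaSa16}, and then to push the universality argument of Theorem~\ref{thm:MainM} through this simulation. First I would fix a constant block size $k$ and encode each cell of $M$'s tape by a $k\times k$ square of cells in $M^c$'s tape, with a distinguished \emph{data cell} storing the original symbol of $\Sigma$ and the remaining $k^{2}-1$ cells serving as scratch space initialized to $0$. The state set of $M^c$ augments that of $M$ with an in-block offset and a small buffer for symbols currently being ferried. Since the movement of a classical head is a function of state only, one step of $M$ is realized by a canonical state-driven tour through the nine neighboring blocks which reads the data cells into the buffer, applies $\delta$, writes the updated symbols back, and steps into the newly designated block. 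The Bennett-style uncomputation of \cite{BaKaSa16} keeps the whole procedure reversible by erasing the transient scratch contents at the end of each simulated step.

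With $M^c$ constructed, the second step is to revisit the proof of Theorem~\ref{thm:MainM} and verify that every gadget survives the encoding. The catcher arrays, wires, delays and normed-network simulators of Section~\ref{sec:BorderProcesses} rely only on the head being steered by isolated $1$-symbols, together with the escape and no-escape properties of Lemmas~\ref{lem:FastEscape} and~\ref{lem:NoEscape}. Both properties lift to $M^c$ with only a polynomial blow-up in spatial and temporal scale, because $M^c$ reproduces the trajectory of $M$ up to a uniform slowdown. Consequently the analogues of Lemmas~\ref{lem:AutomataByM}, \ref{lem:TMSimulatesNetwork}, \ref{lem:ProcessCanDo} and~\ref{lem:ConcreteRealization} can be stated and proved for $M^c$ with only cosmetic changes, so the chain of constructions produces a concrete realization around any input region.

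The main obstacle is that physical universality for $M^c$ demands implementing arbitrary functions $g^c$ on arbitrary patterns $P^c$ in the block alphabet, not only on patterns of the form $\mathrm{encode}(P)$ for $M$-patterns $P$. I would address this by redirecting the computation stage of Lemma~\ref{lem:ProcessCanDo}: instead of composing the decoder from Lemma~\ref{lem:CircuitSimulation} with $g$, the embedded Boolean circuit takes the raw bits extracted from the input square (including scratch cells) and outputs the raw bits of $g^c(P^c)$, which are then deposited one by one by the sculpt stage. Since the probe and sculpt stages act cell-by-cell, the encoding layer drops out of the specification, and the only delicate point is verifying that the analogue of Lemma~\ref{lem:CircuitSimulation} still reconstructs the initial pattern from the escape configuration. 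This follows from the reversibility of $M^c$ together with the disjoint $0$-orbit hypothesis on the input patterns, so no new dynamical property of $M^c$ needs to be established.
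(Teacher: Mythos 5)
Your plan diverges fundamentally from the paper's, and the divergence is where it breaks. The paper does \emph{not} re-encode the tape: the classical machine keeps the alphabet $\Sigma$ and only enlarges the state to $Q \times \{0,1\}^2 \times \{0,\ldots,m-1\}$, so that it is an exact ``$m$th root'' of (an extension of) $M$ on \emph{every} configuration --- the head performs a fixed state-driven tour, swapping tape bits with the two memorized state bits, and by Lemma~5 of \cite{BaKaSa16} (an evenness argument, which is precisely why the two extra bits are needed) the composition over $m$ steps realizes $M$'s local permutation followed by its shift. Your block encoding destroys this: physical universality of $M^c$ quantifies over \emph{arbitrary} patterns in $\npats{m}{Q^c}{\Sigma^c}$, almost none of which are encodings of $M$-configurations (garbage in scratch cells, head offset misaligned with any block grid, gadget patterns such as catchers, wires and clay that you would have to redesign from scratch). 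On such configurations $M^c$ does not ``reproduce the trajectory of $M$ up to a uniform slowdown'', so your claim that Lemmas~\ref{lem:FastEscape} and~\ref{lem:NoEscape} and the whole gadget chain of Lemma~\ref{lem:ProcessCanDo} ``lift with cosmetic changes'' is unsupported --- these are rule-specific facts proved by a potential function and a case analysis for $M$, and they would have to be re-proved for whatever your in-block tour rule actually does on unstructured input. Redirecting the computation stage to ``raw bits'' does not help, because the probe/sculpt/catcher machinery itself presupposes those dynamical lemmas for the machine being used.

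Second, even on well-encoded configurations your construction carries exactly the local obstructions that the paper spends the second half of its sketch eliminating. A simulation running at a uniform slowdown forces every implementable transformation time to sit in a fixed residue class modulo the period --- the same kind of obstruction as the parity issue that forced the passage from $\bar M$ to $M$ --- and your scratch cells (restored by Bennett-style uncomputation each simulated step) and buffer/offset state components behave like (near-)invariant factors, which is precisely what kills PU in the paper's own example of a machine universal in the moving tape but not the moving head model. The paper's proof explicitly breaks the modulo-$m$ rigidity with the asymmetric counter $N_q$ (treating $\stE$ specially) and recycles the two extra state bits as a modulo-$4$ counter so that no state component is frozen; your proposal addresses neither. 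Finally, ``read the data cells into the buffer, apply $\delta$, write back, uncompute'' is not yet a classical reversible machine: each classical step is a permutation of $Q^c \times \Sigma^c$ followed by a state-dependent shift, and which joint permutations of the state and a finite tape window are reachable by composing such steps along a fixed tour is constrained (the parity of the permutation matters); this is the content of the cited lemma of \cite{BaKaSa16} and cannot be waved away by invoking Bennett uncomputation. In short: the approach is genuinely different from the paper's, and as it stands it has gaps at the arbitrary-pattern level, at the timing/invariant-factor level, and at the reversible-implementation level.
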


\begin{proof}[sketch]
A classical reversible Turing machine in the sense of \cite{BaKaSa16} first applies a permutation $\pi \in \Sym(Q \times \Sigma)$ to the current state and the current tape symbol, and then performs a \emph{state-dependent shift}, i.e.\ moves as a function of the current state only. Our machine $M$ has the property that its behavior is composed of a joint permutation of the state and a local neighborhood on the tape (a \emph{local permutation} in the terminology of \cite{BaKaSa16}) and then a state-dependent shift.

The basic idea is the following. For any machine that admits such a decomposition, with state set $Q$ and alphabet $\Sigma$, we can construct a reversible classical Turing machine which is an ``$m$th root'' for it, for any large enough $m$. For this, let $Q' = Q \times \{0,1\}^2 \times \{0,1,...,m-1\}$ for large $m$, and $\Sigma' = \Sigma$. On every time step, we increment the final component modulo $m$. This allows us to think of the dynamics as being composed of $m$ distinct steps. On these steps, the head walks around deterministically, performing even permutations on the current tape symbol and the $Q \times \{0,1\}^2$-component of its state. Since we can swap tape bits with the two bits memorized in the state, we can effectively perform arbitrary joint permutations on $Q$ and two symbols of the tape. By Lemma~5 of \cite{BaKaSa16}, this allows us to perform any even permutation of $Q \times \{0,1\}^2 \times \{0,1\}^N$ where $N \subset \Z^2$ is finite. Every permutation that fixes the two bits in the $\{0,1\}^2$-component of the state is even, so this allows us to perfectly simulate the behavior of an arbitrary Turing machine composed of a local permutation followed by a state-dependent shift.

This does not quite give physical universality, as the simulation happens every $m$ steps rather than on every step, leading to a parity (or rather modulo $m$) issue, and also because the state contains two bits whose values are never modified. To combat this for our specific machine $M$, replace $Q'$ with a set $Q'' = \{(q, a, b, c) \;|\; q \in Q, (a,b) \in \{0,1\}^2, c \in N_q \}$, where $N_q = \{0,1,\ldots,m-1\}$ for $q \neq \stE$ and $N_{\stE} = \{0,1,\ldots,m\}$. Modify the rule so that if $c$-component holds $m-1$, then if $q$-component is equal to $\stE$, we increment $c$ to $m$, and if the $c$-component holds $m$, we behave as if it held $m-1$. Furthermore, modify the rule so that whenever the counter $c$ overflows to $0$, we additionally increment $(a, b)$ as a modulo-$4$ counter. After these modifications, there is no local restriction to physical universality (no matter what the value of $m$ is), and the proof of PU works the same as the main proof, up to timing details.
\end{proof}

\section{Additional Information and Open Questions}
\label{sec:AdditionalShit}

In this section, we collect questions from previous sections under one heading, and ask several more questions. We also sketch the proofs of two additional results: we show that physical universality of Turing machines is closed under a natural group of transformations, and show that the maximal number of steps the head of $\bar M$ can stay inside $[0,n-1]^2$ is $\Theta(n^3)$ when restricted to configurations with $O(n)$ many $1$s.

\subsection{Questions asked in previous sections}

\begin{conjecture}
There exists a one-dimensional reversible Turing machine which is physically universal in both the moving head model and the moving tape model, and is topologically mixing.
\end{conjecture}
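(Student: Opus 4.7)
The plan is to adapt the two-dimensional construction of $M$ to a one-dimensional setting by replacing two-dimensional spatial navigation with state-based routing. I would design a reversible one-dimensional Turing machine $M_1$ on a binary or slightly larger alphabet $\Sigma_1$ with a rich state set $Q_1$ encoding a direction of motion ($\stW$ or $\stE$), together with auxiliary registers used by the simulated gadgets. As the first step, I would establish the one-dimensional analogues of Lemma~\ref{lem:FastEscape} and Lemma~\ref{lem:NoEscape}: that the head escapes any interval $[0,n-1]$ in polynomial time, and that when started from a finite pattern it eventually exits and travels to infinity without leaving stray symbols behind. This would likely be enforced by building symbol-conservation into the rule and using a Lyapunov-type potential similar to the $\pi_2$ of Lemma~\ref{lem:FastEscape}.

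The central technical step is the construction of a one-dimensional analogue of the catcher systems of Section~\ref{sec:BorderProcesses}. In one dimension, the head exits the input region $[0,m-1]$ either to the left or to the right, so the role played in the 2D construction by the exit coordinate $\vec b \in B_{m,n}$ must be taken over entirely by the state of the head at exit. I would place on each side of $[0,m-1]$ a series of nested marker patterns that act as catchers: when the head enters such a catcher from the outside in a recognisable state, it is processed, and when the active catcher catches the head after an exit, it reads the exit state, updates a tape-based register, and routes the head to the correct input terminal of a simulated s-automaton. Partial activation would work analogously to the 2D case, leaving a gap that corresponds to the intended re-entry state rather than the intended re-entry coordinate.

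Next I would reprove the analogues of Lemma~\ref{lem:TMSimulatesNetwork} and Lemma~\ref{lem:ConcreteRealization}, showing that any constant-weight normed network over $\bar t$, $\bar m$ and $\bar s$ can be laid out along the tape so that $M_1$ simulates it with consistent timing. Since wires cannot cross in one dimension, I would use a linear layout in which the primitive components are placed in a row at one end of the tape, and each wire is realised by a sequence of tape markers together with a unique identifier carried in the head's state; a disposable lookup table placed between the catchers and the primitive components translates these identifiers into physical travel. Finally, an analogue of Lemma~\ref{lem:ProcessCanDo} can be obtained by the same five-stage strategy (escape, probe, transport, computation, sculpt), with the 2D ``clay'' block replaced by a contiguous one-dimensional block of $1$s that is eroded cell by cell to produce the target pattern.

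The hard part will be two things. First, reproducing the polynomial size bounds of the 2D construction in a setting where the tape is totally ordered: the naïve state-based routing scheme blows up polynomially in the number of wires, and the absence of crossings may force additional linearisation overhead; keeping the overall complexity polynomial in the circuit complexities of $A$ and $g$ (and thus getting \emph{efficient} physical universality) will require careful engineering of the catcher-to-network interface. Second, verifying that the one-dimensional escape lemma is strong enough to guarantee that catchers really do catch in bounded time; with only two directions, a misbehaving local rule can easily admit periodic orbits, so the rule of $M_1$ must be designed with this in mind, perhaps by directly importing the ``no two consecutive right turns'' property of Lemma~\ref{lem:PushTwice} in the guise of a suitable parity or counter invariant. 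Once the analogue of Theorem~\ref{thm:MainM} is proved, physical universality in the moving tape model and topological mixing of all finite orders follow as in the two-dimensional case, since the construction leaves the waiting time $t$ as a free parameter that can be inflated arbitrarily.
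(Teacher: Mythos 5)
There is a genuine gap, and it starts before any technical detail: the statement you are trying to prove is stated in the paper only as a \emph{conjecture}, and the authors explicitly say that constructing a one-dimensional physically universal Turing machine seems more difficult, that information from the initial pattern ``needs to be fetched by some type of gadget rather than by simply shooting the head in the correct cell,'' and that they do not even have a candidate machine. So there is no proof in the paper to compare against, and your text is a plan rather than a proof: no rule for $M_1$ is defined, and every lemma you would need (the 1D analogues of Lemma~\ref{lem:FastEscape}, Lemma~\ref{lem:NoEscape}, Lemma~\ref{lem:TMSimulatesNetwork}, Lemma~\ref{lem:ConcreteRealization}, Lemma~\ref{lem:ProcessCanDo}) is asserted rather than proved.

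More importantly, the central step of your plan fails as described. You propose that ``the role played in the 2D construction by the exit coordinate $\vec b \in B_{m,n}$ must be taken over entirely by the state of the head at exit.'' But the state set $Q_1$ of a Turing machine is finite and fixed, while $m$ is arbitrary: each exit can leak only $O(1)$ bits through the exit side and exit state, whereas in two dimensions a single exit coordinate carries $\Theta(\log m)$ bits, and — more crucially — the 2D probe stage works by \emph{aiming} the head at an arbitrary chosen row of the unknown region and reading off a column from the deflection. In one dimension the head can only be inserted from the left or the right of $[0,m-1]$; there is no mechanism to address an interior cell, and the only remaining information channel is the exit \emph{time}, which your catchers, as described, do not measure. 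This is exactly the obstruction the authors allude to: extracting the contents of the interval requires a genuinely new gadget (e.g.\ one that decodes timing or transports data out symbol by symbol through the interval boundary in a reversible, timing-consistent way), and it is not obtained by transliterating the catcher systems of Section~\ref{sec:BorderProcesses}. Likewise, the escape properties you plan to ``import'' (Lemma~\ref{lem:PushTwice} and the fact that the escaping head never turns again, which is what makes catching possible) are consequences of the specific 2D rule and its geometry; for an as-yet-undefined 1D rule there is nothing to import. Until a concrete machine with a provable escape/extraction mechanism is exhibited, the conjecture remains open, and the final claims (efficiency, moving tape PU, mixing of all orders) inherit the gap.
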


\begin{question}
Does there exist a Turing machine that is efficiently physically universal in the moving tape model, but not in the moving head model?
\end{question}

If a Turing machine with the following property exists, it is vacuously physically universal in the moving tape model, under our definition.

\begin{question}
Does there exist a Turing machine $M$ such that every configuration with finite support is in the same orbit in the moving tape model?
\end{question}

\subsection{Alternative definitions}

\subsubsection{PU from headless patterns}

A natural alternative to requiring that \emph{all} the patterns $P_i$ in the definition of PU contain the head is to require that none of them do. Say a machine is \emph{physically universal from headless patterns}, if, whenever $P, R$ are $k$-tuples of patterns and the patterns in $P$ contain no heads, then $P$ is physically transformable to $R$ in the moving tape model. Physical universality from headless patterns holds for our machine under this definition as a simple consequence\footnote{This implication holds for all machines that admit a semilinear spacetime diagram in the moving head model, from at least one finite initial configuration. Our machine has this property for \emph{all} finite initial configurations. Langton's ant has it from some initial finite patterns and conjecturally all of them.} of Theorem~\ref{thm:MainM}.

\subsubsection{PU from context-separable patterns}

Another possible alternative definition of physical universality would be that the patterns $P_i \in \npats{m}{Q}{\Sigma}$ do not necessarily have distinct $0$-orbits, but simply admit some context $x \in \pats{\Z^d \setminus [0, m-1]^d}{Q}{\Sigma}$ such that the resulting orbits are disjoint. Say a Turing machine is physically universal from \emph{context-separable patterns} if any tuple $P$ admitting such a context is physically transformable to any tuple $R$ in the moving head model. One can state an analogous definition in the moving tape model.

\begin{question}
Is $M$ physically universal from context-separable patterns, in either model? Is there any Turing machine with this property?
\end{question}

\subsubsection{Combinatorial Entanglement}

Our machine $M$ is symbol-conserving, so one can find a continuous function $P : Q \times \Sigma^{\Z^d} \to \Sym_0(\Z^d)$ where $\Sym_0(\Z^d)$ denotes the discrete group of permutations with finite support, such that in the moving tape model we have $M(q, x) = (p, \tau_{\vec w}(P(q, x)(x)))$ for all $(q, x) \in Q \times \Sigma^{\Z^d}$ (compare to the definition of the moving tape model in Section~\ref{sec:Prelim}). The choice of permutation is not unique, but in our informal description of $M$ and $\bar M$ we essentially specified $P$ already: the permutation used is the one that swaps the $0$ and $1$ visible in \eqref{eq:Transes}.

Fixing this $P$, one can ask for a stronger physical universality allowing ``entanglement'': in addition to initial patterns $P_1,\ldots,P_k$, whose $0$-padded orbits are disjoint and final patterns $R_1,\ldots,R_k$, we may assume a partially defined bijection between positions of $P_j$ and $R_j$ sharing the same symbol, and require that not only $M^t(P_j \sqcup x)_{[0,n-1]^2} = R_j$ but the cocycle $P(M^t)$ respects the specified partial bijection. For example, if the transformation $P_j \mapsto R_j$ being implemented only flips the central bit, it is natural to require that the other bits are literally not moved, i.e. the partially defined bijection fixes them. In our proof of physical universality, these bits would be sculpted from the clay, and the original bits would become garbage. It seems very difficult to undo this damage.

Let us say a machine is \emph{physically universal on combinatorially entangled $0$-finite configurations} if it is physically universal in the sense of the previous paragraph.

\begin{question}
Is $M$ physically universal on combinatorially entangled $0$-finite configurations? Does there exist a Turing machine that is?
\end{question}

\subsubsection{Physical universality in quantum Turing machines}

One may wonder if there are quantum analogues of our results. We are not aware of formal frameworks for TMH and TMT dynamics of quantum Turing machines, so we leave open no precise question. However, given the existence of PU quantum CA proved in \cite{Sc15b}, it is of interest to try to prove quantum analogs of our results.

\begin{question}
  Is there a natural definition of PU for quantum Turing machines?
  If so, is there a PU quantum Turing machine?
\end{question}

\subsection{Turmites}

\subsubsection{Langton's ant}

It seems natural to ask questions about Langton's ant, since it is perhaps the best-known individual Turing machine. To each pattern $P \in \npats{m}{Q}{\Sigma}$, where $|Q| = 4, |\Sigma| = 2$ are the states and tape symbols of Langton's ant, we can associate a \emph{parity} $\pi(P) \in \{0,1,?\}$ by adding modulo $2$ the coordinates of the head and the indicator bit of whether it is traveling vertically, and defining $\pi(P) = {?}$ if a head is not visible.

\begin{question}
Can Langton's ant physically transform a tuple $P \in (\npats{m}{Q}{\Sigma})^k$ to $R \in (\pats{m}{Q}{\Sigma})^k$ whenever the patterns in $P$ are context-separable and all patterns in $P$ (respectively $R$) with a non-$?$ parity have pairwise the same parity?
\end{question}

It seems difficult to resolve this question without first resolving topological transitivity and the question of whether Langton's ant eventually enters one of the standard cycles from all finite initial patterns.

One can state many variants, e.g.\ replace context-separability by disjoint $0$-orbits as we did with $M$. Another, perhaps somewhat more tractable type of PU one could investigate for Langton's ant is the following: Say a machine is \emph{headless-to-headless physically universal} if $(P_0,P_1,...,P_{k-1})$ is physically transformable to $(R_0,R_1,...,R_{k-1})$ whenever none of the $P_i$ or $R_i$ contain a head. This is of course a weakening of PU from headless configurations, so our machine $M$ has this property.

\begin{question}
Is Langton's ant headless-to-headless physically universal in the moving head model?
\end{question}

It is easy to verify experimentally that this is true with pattern size $m = 1$.

\begin{question}
Is Langton's ant headless-to-headless physically universal when restricted to patterns $P, R \in (\pats{m}{Q}{\Sigma} \setminus \npats{m}{Q}{\Sigma})^k$ with $m = 2$?
\end{question}

We also wonder if Langton's ant can physically transform any individual pattern to the all-zero pattern (a very special case of topological transitivity).

\subsubsection{Other turmites}

In \cite{MaGaMeMo18}, it is shown that every turmite in the sense of \cite{De89b} is either Turing universal in a certain sense or $4$-periodic in the moving head model.
Like Langton's ant, turmites that are not $4$-periodic have no periodic points in the moving head model \cite{GaPrSuTr98}. It seems difficult to perform controlled tape manipulation with any of them, or to disprove the possibility of such control.

\begin{question}
Is every aperiodic turmite PU up to parity caveats (see the previous section)? Is any?
\end{question}

\subsection{Invariance and the inverse rule}

\subsubsection{Invariance}

We show that physical universality on finite configurations has good invariance properties. The group $\GL(d,\Z)$ acts on $X_Q \times \Sigma^{\Z^d}$ in the obvious way: $A x_{\vec v} = x_{A^{-1} \vec v}$. In the following statement, for notational convenicence we consider $\Sigma^{\Z^d}$ in a natural way as a subset of $X_Q \times \Sigma^{\Z^d}$. The proof is simply a matter of opening up the definitions, but we go through the motions.

\begin{proposition}
  \label{prop:InvariantUnderMorphisms}
Let $G \leq \Homeo(X_Q \times \Sigma^{\Z^d})$ be the group generated by $\GL(d,\Z)$ and $\Aut(X_Q \times \Sigma^{\Z^d})$. Let $M = (Q, \Sigma, N, \delta)$ be a Turing machine, considered in the moving head model. If $M$ is physically universal on $a$-finite configurations and $g \in G$, then $g \circ M \circ g^{-1}$ is physically universal on $b$-finite configurations where $g(a^{\Z^d}) = b^{\Z^d}$.
\end{proposition}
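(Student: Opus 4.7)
The plan is to verify conjugation-invariance under each generator of $G$ separately and then compose. Since $G$ is generated by $\GL(d,\Z)$ and $\Aut(X_Q \times \Sigma^{\Z^d})$, and the preservation property chains naturally along a composition $g_1 \circ g_2$ by tracking the image of the constant configuration at each step, it suffices to handle a single lattice automorphism $A \in \GL(d,\Z)$ and a single shift automorphism $\phi \in \Aut(X_Q \times \Sigma^{\Z^d})$.

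For $\phi$ with $\phi(a^{\Z^d}) = b^{\Z^d}$, the Curtis--Hedlund--Lyndon theorem gives a common radius $r$ such that both $\phi$ and $\phi^{-1}$ are $r$-local. Given $k$-tuples $P, R \in (\npats{m}{Q}{\Sigma})^k$ with the $P_j$ having disjoint $b$-orbits under $\phi M \phi^{-1}$, set $D = [0,m-1]^d$, $D' = D + [-r, r]^d$, and $D^* = D + [-3r, 3r]^d$. I would pull back by applying $\phi^{-1}$ to the $b$-padded configurations $P_j \sqcup b^{\Z^d \setminus D}$: since $\phi^{-1}$ is $r$-local and $\phi^{-1}(b^{\Z^d}) = a^{\Z^d}$, each pullback equals $a^{\Z^d}$ outside $D'$, and its restriction to $D'$ defines a pattern $\tilde P_j$. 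Padding $\tilde P_j$ with $a$'s inside $D^*$ produces $\tilde P_j^* \in \pats{D^*}{Q}{\Sigma}$, and these tuples inherit pairwise disjoint $a$-orbits under $M$ directly from the hypothesis via the conjugacy. Define targets $\tilde R_j^* \in \pats{D^*}{Q}{\Sigma}$ analogously from $R_j$, and apply the assumed PU of $M$ on the cube $D^*$ to obtain a common $a$-finite partial context $\tilde y$ outside $D^*$ and a time $t$ with $M^t(\tilde P_j^* \sqcup \tilde y)|_{D^*} = \tilde R_j^*$ for all $j$.

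Pushing forward through $\phi$ yields the desired initial configurations. The configurations $\tilde P_j^* \sqcup \tilde y$ and $\phi^{-1}(P_j \sqcup b^{\Z^d \setminus D})$ agree on all of $D^*$ (both equal $\tilde P_j$ on $D'$ and $a$ on the $a$-annulus $D^* \setminus D'$), so the $r$-locality of $\phi$ transfers this agreement to $D + [-2r, 2r]^d$. Hence $\phi(\tilde P_j^* \sqcup \tilde y)$ equals $P_j$ on $D$ and the constant $b$ on the annulus $(D + [-2r, 2r]^d) \setminus D$. For positions outside $D + [-2r, 2r]^d$, the radius-$r$ neighborhood is disjoint from $D'$, so the push-forward depends only on $j$-independent data (namely $a$'s and $\tilde y$). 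The restrictions $\phi(\tilde P_j^* \sqcup \tilde y)|_{\Z^d \setminus D}$ therefore coincide and define a single $b$-finite partial context $y$. The same neighborhood computation applied at time $t$ to $M^t(\tilde P_j^* \sqcup \tilde y)$ (which equals $\tilde R_j^*$ on $D^*$ and hence agrees with $\phi^{-1}(R_j \sqcup b^{\Z^d \setminus D})$ there) yields $(\phi M \phi^{-1})^t(P_j \sqcup y)|_D = R_j$, which is the required PU statement.

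The case $A \in \GL(d,\Z)$ is simpler because $A$ is a global bijection of the lattice rather than a local map: $A$ sends $a^{\Z^d}$ to itself, the parallelepiped $A^{-1}(D)$ fits inside a cube of side $O(\|A\|_\infty\, m)$, and one directly applies PU of $M$ on that cube to the $A^{-1}$-pulled-back patterns and pushes forward via $A$, with no annulus issue because $A$ bijects $j$-dependent positions to $j$-dependent positions. The main obstacle is the annulus bookkeeping in the $\phi$ case: the enlarged cube $D^*$ must contain an $a$-annulus of width at least $2r$ around $D'$, and this buffer is precisely what keeps the $j$-dependence of $\tilde P_j$ inside $D'$ from leaking back across $\partial D$ when $\phi$ is reapplied; the remaining verifications are routine consequences of the $r$-locality of $\phi$ and $\phi^{-1}$.
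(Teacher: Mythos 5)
Your proposal is correct and is essentially the paper's own argument: pull the $b$-padded patterns back through $g^{-1}$, observe they are constantly $a$ outside a bounded window, enlarge by a constant buffer, apply the assumed physical universality of $M$ there, and push forward, using locality to see that the resulting outside context is independent of $j$ and that $R_j$ appears on $D$ at time $t$. The only organizational differences are that you treat the generating subgroups $\GL(d,\Z)$ and $\Aut(X_Q \times \Sigma^{\Z^d})$ separately and chain (which is legitimate, since your single-generator step never uses that the conjugated map is a Turing machine, and intermediate images of constant configurations stay constant because these maps send shift-fixed points to shift-fixed points), and that you use explicit Curtis--Hedlund--Lyndon radii and width-$2r$ annuli where the paper handles a general $g \in G$ in one pass with a soft continuity/compactness argument. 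One point you leave implicit (the paper also only cites it as a basic property of $\Aut(X_Q \times \Sigma^{\Z^d})$) is that the pulled-back configurations still contain a head, so that $\tilde P_j^* \in \npats{D^*}{Q}{\Sigma}$ and the definition of PU applies; also, PU does not promise an $a$-finite context $\tilde y$, but your push-forward never actually uses that finiteness, so this is harmless.
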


\begin{proof}
  In this proof, the notation $[P]_{\vec v}$ for a pattern $P \in \pats{m}{Q}{\Sigma}$ means the \emph{cylinder set} $\{ x \in X_Q \times \Sigma^{\Z^d} \;|\; (\tau^{\vec v} x)|_N = P \}$ of those configurations that contain the pattern at position $\vec v$.
  
Suppose $m, k \in \N$, and let $P_0, \ldots, P_{k-1} \in \npats{m}{Q}{\Sigma}$ and $R_0, \ldots, R_{k-1} \in \pats{m}{Q}{\Sigma}$ be patterns in $X_Q \times \Sigma^{\Z^d}$ such that the configurations $P_j \sqcup b^{\Z^d \setminus [0, m-1]^d}$ have disjoint $(g \circ M \circ g^{-1})$-orbits. We need to show that there exists a partial configuration $x \in \pats{\Z^d \setminus [0, m-1]^d}{Q}{\Sigma}$ and $t \in \N$ such that $(g \circ M \circ g^{-1})^t(P_j \sqcup x)|_{[0, m-1]^d} = R_j$ for all $j \in [0, k-1]$.

For this, consider the configurations $y_j = g^{-1}(P_j \sqcup b^{\Z^d \setminus [0, m-1]^d})$. By basic properties of $\Aut(X_Q \times \Sigma^{\Z^d})$, the $y_j$ are configurations of finite $a$-support containing a head, and since the configurations $P_j \sqcup b^{\Z^d \setminus [0, m-1]^d}$ have disjoint $(g \circ M \circ g^{-1})$-orbits, the configurations $y_j$ have disjoint $M$-orbits. Let $P_0', \ldots, P_{k-1}' \in \npats{m'}{Q}{\Sigma}$ for a large even $m'$ be central patterns (i.e. extract the contents of $[-m'/2+1, m'/2]^d$) of the configurations $y_j$ containing the support and the head. 

For $R_0, \ldots, R_{k-1} \in \pats{m}{Q}{\Sigma}$, extend them by $b$s (or anything else), apply $g^{-1}$, and in these configurations apply continuity of $g$ to obtain central patterns $R_0', \ldots, R_{k-1}' \in \pats{m'}{Q}{\Sigma}$ (increasing $m'$ if needed) such that all configurations contained in the cylinder
$[R_i']_{(-m'/2+1,-m'/2+1,...,-m'/2+1)}$
contain $R_0$ in $[0,m-1]^d$.

By physical universality of $M$ (applied through conjugation by translation by $(m'/2-1,m'/2-1,...,m'/2-1)$), there exists a partial configuration $x' \in \pats{\Z^d \setminus [-m'/2+1, m'/2]^d}{Q}{\Sigma}$ and $t \in \N$ such that $M^t(P_j' \sqcup x')|_{[-m'/2+1, m'/2]^d} = R_j'$ for all $j \in [0, k-1]$.

It follows that for each $j$, by conjugating back we get that in $(g \circ M \circ g^{-1})^t( g(P_j' \sqcup x') )$ the contents of $[0,m-1]^d$ is $R_j$. To complete the proof, we need to show that $g(P_j' \sqcup x') = P_j \sqcup x$ for some partial configuration $x \in \pats{\Z^d \setminus [0, m-1]^d}{Q}{\Sigma}$, assuming $m'$ was picked large enough.

Since $P_j'$ was extracted from $g^{-1}(P_j \sqcup b^{\Z^d \setminus [0, m-1]^d})$, just like in the choice of $R_i'$ we have by continuity that $g([P_i']_{(-m'/2+1,-m'/2+1,...,-m'/2+1)})$ contains $P_j$ in $[0,m-1]^d$ if $m'$ was taken large enough. In fact, again by continuity, by picking larger $m'$, we may assume every configuration $g([P_i']_{(-m'/2+1,-m'/2+1,...,-m'/2+1)})$ contains $P_j \sqcup b^{[-\ell, \ell]^d \setminus [0,m-1]^d}$ in $[-\ell, \ell]$, for a large $\ell$.

Observe now that since $g$ has a local rule (up to conjugation by a linear transformation), if $\ell$ is large enough, for any $\vec v \notin [-\ell, \ell]^d$ we have that $g(P_j' \sqcup x')_{\vec v}$ depends only on the values of $P_j' \sqcup x'$ outside the (finite) $a$-support of $y_j$, equivalently outside the non-$a$ support of $P_j'$. Since $P_j'$ contains $a$ outside its non-$a$ support and $x'$ is independent of $j$, we have that indeed there exists some $x$ such that for all $j$, $g(P_j' \sqcup x') = P_j \sqcup x$.
\end{proof}

\begin{corollary}
\label{cor:InversePU}
The machine $M^{-1}$ is efficiently physically universal in the moving head model and the moving tape model with zero symbol $1$.
\end{corollary}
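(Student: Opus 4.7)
The plan is to derive the corollary from Theorem~\ref{thm:MainM} via Proposition~\ref{prop:InvariantUnderMorphisms}, and then invoke the implication (already used in the corollary to Theorem~\ref{thm:MainM}) that physical universality in the moving head model transfers to the moving tape model. Hence it suffices to exhibit an involution $g$ in the group $G$ of Proposition~\ref{prop:InvariantUnderMorphisms} that satisfies both
\[
g \circ M \circ g^{-1} = M^{-1} \qquad \text{and} \qquad g(0^{\Z^2}) = 1^{\Z^2},
\]
and apply the proposition to the machine $M$, which by Theorem~\ref{thm:MainM} is efficiently physically universal on $0$-finite configurations.

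The natural candidate generalizes the involution $\sigma = \mu \circ \omega \circ \beta$ of Lemma~\ref{lem:Inverse} to the $5$-state set $Q = \{\stN, \stN^*, \stE, \stW, \stS\}$ of $M$. The mirror $\mu$ lies in $\GL(2,\Z)$, the bit flip $\beta$ is a cellular automaton automorphism of $X_Q \times \Sigma^{\Z^2}$, and the opposite map $\omega$ (a local relabelling of the head together with a state-dependent single-cell translation) is likewise such an automorphism, so the composition $g$ indeed lies in $G$. Since $\beta$ flips every tape bit we immediately get $g(0^{\Z^2}) = 1^{\Z^2}$, which is precisely the symbol swap demanded by the corollary.

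To establish the conjugation identity $g \circ M \circ g^{-1} = M^{-1}$ — the direct analogue of Lemma~\ref{lem:Inverse} with $\bar M$ replaced by $M$ — I would proceed by the same finite case analysis: the first and third phases in the definition of $M$ are involutions (the local rewrite~\eqref{eq:Transes} and the toggle $\stN \leftrightarrow \stN^*$), while the second phase is a state-dependent shift, so time reversal commutes with $g$ on a phase-by-phase basis just as it does for $\bar M$. The action of $g$ on the auxiliary state $\stN^*$ has to be chosen so that a forward two-step north move of $M$ maps to a (suitably decomposed) reverse south move; concretely one pairs $\stN^*$ with the ``halfway'' south configuration produced by $\omega \circ \mu \circ \beta$. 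Once $g$ is fixed, Proposition~\ref{prop:InvariantUnderMorphisms} yields efficient physical universality of $M^{-1}$ on $1$-finite configurations in the moving head model, and the moving tape case follows.

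The main obstacle is precisely the bookkeeping around $\stN^*$: because the broken symmetry of $M$ versus $\bar M$ exists exactly to resolve a parity issue on north moves, the extension of $\omega$ to $\stN^*$ cannot be read off Lemma~\ref{lem:Inverse} and must be verified directly against the three-phase definition of $M$. Apart from this, everything else is automatic: $g$ is a local map, so it contributes only an additive constant to the time bounds, and hence efficiency of PU is preserved under the conjugation.
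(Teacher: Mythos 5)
Your overall strategy is the same as the paper's: conjugate $M$ to $M^{-1}$ by an involution $\sigma' \in G$ that includes the bit flip, then invoke Proposition~\ref{prop:InvariantUnderMorphisms} and the moving-head-to-moving-tape transfer. However, your concrete conjugator does not work, and the failure is not confined to the $\stN^*$ bookkeeping that you flag. The map $\sigma$ of Lemma~\ref{lem:Inverse} exchanges $\stN$ and $\stS$ (and fixes $\stE$, $\stW$), and any $g$ that ``generalizes $\sigma$'' as you propose sends $\stN$-headed configurations to $\stS$-headed ones on the flipped tape, no matter how it treats $\stN^*$. This is incompatible with $gMg^{-1}=M^{-1}$: take $x_0 = (\stN[\vec 0], 0^{\Z^2})$. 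On an all-$0$ tape $M$ moves north one cell every two steps, so $M^{2t}(x_0) = (\stN[(0,t)], 0^{\Z^2})$ and hence $g(M^{2t}(x_0)) = (\stS[(0,t-1)], 1^{\Z^2})$, whereas $g(x_0) = (\stS[(0,-1)], 1^{\Z^2})$ and under $M^{-1}$ an $\stS$-head on the all-$1$ tape moves north a full cell every single step (the delay of $M^{-1}$ sits on $\stN/\stN^*$-heads, not on $\stS$-heads), so $M^{-2t}(g(x_0)) = (\stS[(0,2t-1)], 1^{\Z^2})$. These disagree for all $t \ge 1$, and the computation never touches $\stN^*$ in the domain of $g$, so no choice of the ``halfway'' pairing can repair it. Your guiding sentence that ``a forward two-step north move of $M$ maps to a (suitably decomposed) reverse south move'' is exactly where this goes wrong: in $M^{-1}$ the two-step moves are the southward moves of $\stN/\stN^*$-heads, while $\stS$-heads move at full speed. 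Since $M$ breaks the north--south symmetry (speed $1/2$ vs.\ speed $1$), any conjugator between $M$ and $M^{-1}$ must pair the slow vertical direction with itself under the combined space-and-time reversal.

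The repair is to change the spatial part, not just the $\stN^*$ entries: use a conjugator whose linear part is the vertical mirror $(a,b) \mapsto (a,-b)$ and whose state map fixes $\stN$ and $\stS$ and swaps $\stE \leftrightarrow \stW$, together with the bit flip --- equivalently, $\sigma$ composed with the $180$-degree rotation, which is a symmetry of $\bar M$; then all travel speeds match ($\stN \mapsto \stN$, $\stS \mapsto \stS$, $\stE \leftrightarrow \stW$) and only the offsets and the $\stN \leftrightarrow \stN^*$ cases require the finite case analysis. This is the kind of $\sigma'$ the paper's (very terse) proof has in mind: it asserts $M^{-1} = \sigma' \circ M \circ \sigma'$ with $\sigma' \in G$ and then applies Proposition~\ref{prop:InvariantUnderMorphisms}, exactly as you do. With the corrected $g$, the remainder of your argument --- $g \in G$, $g(0^{\Z^2}) = 1^{\Z^2}$ so the zero symbol becomes $1$, the moving tape case, and the observation that a bounded-radius conjugacy does not affect the polynomial time bounds --- is fine and matches the paper.
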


\begin{proof}
  By Lemma~\ref{lem:Inverse} we have $M^{-1} = \sigma' \circ M \circ \sigma'$, and it is easy to verify $\sigma' \in G$ where $G$ is as in Proposition~\ref{prop:InvariantUnderMorphisms}.
\end{proof}

Even if the state sets and alphabets are not necessarily the same, a conjugacy between $X_Q \times \Sigma^{\Z^d}$ and $X_{Q'} \times (\Sigma')^{\Z^d}$ will still conjugate physically universal Turing machines between the subshifts. However, it is easy to show that a conjugacy can only exist if $|Q| = |Q'|$ and $|\Sigma| = |\Sigma'|$, so this is a trivial consequence of the above.

A similar proof works in the moving tape model. One can directly consider the group obtained from $\Aut(X_Q \times \Sigma^{\Z^d})$ by fixing the position of the head, but as it is easy to see that this is just the group generated by reversible cellular automata and reversible Turing machines in the moving tape model in the sense of~\cite{BaKaSa16}, we state the proposition directly in this way. In the following proposition, $\Aut(\Sigma^{\Z^d})$ acts in a natural way on $Q \times \Sigma^{\Z^d}$.

\begin{proposition}
Let $G \leq \Homeo(X_Q \times \Sigma^{\Z^d})$ be the group generated by $G_1 = \GL(d,\Z)$, the group of Turing machines $G_2 = \RTM(Q, \Sigma)$ from~\cite{BaKaSa16}, and $G_3 = \Aut(\Sigma^{\Z^d})$. Let $M = (Q, \Sigma, N, \delta)$ be a Turing machine, considered in the moving tape model. If $M$ is physically universal on $a$-finite configurations and $g \in G_i$, then $g \circ M \circ g^{-1}$ is physically universal on $b$-finite configurations, where
\begin{itemize}
\item $b = a$ if $g \in G_1 \cup G_2$, and
\item $b$ is defined by $g(a^{\Z^d}) = b^{\Z^d}$ if $g \in G_3$.
\end{itemize}
\end{proposition}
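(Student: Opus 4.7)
The plan is to prove the claim for each of the three generators separately, adapting the argument of Proposition~\ref{prop:InvariantUnderMorphisms} to the moving tape model. In this setting, a pattern tuple has the form $((q_j, P_j))_j$ with explicit state components, and a context $x \in \Sigma^{\Z^d \setminus [0, m-1]^d}$ carries no head. The overall scheme I would follow is: given input/output pattern tuples $((q_j, P_j))_j$ and $((p_j, R_j))_j$ satisfying the hypothesis of physical universality for $g \circ M \circ g^{-1}$, pull them back via $g^{-1}$ to obtain $M$-compatible (and possibly larger) pattern tuples $((q'_j, P'_j))_j$ and $((p'_j, R'_j))_j$ on $a$-finite configurations, invoke PU of $M$ to obtain a context $x'$ and time $t$, and push the result forward via $g$ to obtain a context $x$ witnessing the transformation for the conjugated system.

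For $g \in G_1 = \GL(d, \Z)$, the action fixes the state and permutes tape coordinates; it preserves $a^{\Z^d}$, so $b = a$. The pullback of a pattern tuple is another pattern tuple, up to padding so that $g^{-1}([0, m-1]^d)$ is covered by a cube of larger side length, and disjoint orbits remain disjoint. For $g \in G_2 = \RTM(Q, \Sigma)$, elements decompose as a local permutation followed by a state-dependent shift~\cite{BaKaSa16}; conjugation by $g$ therefore affects only the state and a bounded neighborhood of the origin, so pullbacks are still pattern tuples on $a$-finite configurations, with $b = a$. In both cases the argument reduces, after this reduction to pullbacks, to an application of PU of $M$ and a pushforward of the witnessing context.

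For $g \in G_3 = \Aut(\Sigma^{\Z^d})$, which is the main case, I would follow the proof of Proposition~\ref{prop:InvariantUnderMorphisms} closely. Consider the configurations $(q_j, y_j) = (q_j, g^{-1}(P_j \sqcup b^{\Z^d \setminus [0, m-1]^d}))$; since $g^{-1}(b^{\Z^d}) = a^{\Z^d}$ these are $a$-finite, and by hypothesis their $M$-orbits are pairwise disjoint. Extract central patterns $P'_j \in \Sigma^{[-m'/2+1, m'/2]^d}$ of a large even side length $m'$ covering the non-$a$ support, and use continuity of $g$ to pick patterns $R'_j$ of the same shape such that the cylinder $[R'_j]$, once pushed forward by $g$, is contained in the cylinder of $R_j$ at $[0, m-1]^d$. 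Apply physical universality of $M$ in the moving tape model to the tuples $((q_j, P'_j))_j$, $((p_j, R'_j))_j$ to obtain the desired $x'$ and $t$, and then push forward by $g$.

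The main obstacle, exactly as in Proposition~\ref{prop:InvariantUnderMorphisms}, will be producing a single context $x$ (on the $g \circ M \circ g^{-1}$ side) that is independent of $j$. The resolution is again by continuity: by enlarging $m'$ further, we may assume that the evaluation of $g$ at any coordinate outside a large box $[-\ell, \ell]^d$ depends only on the tape values outside the non-$a$ support of $P'_j$. Since $P'_j$ agrees with $a$ outside that support and $x'$ is shared across $j$, the pushforward $g(M^t(q_j, P'_j \sqcup x'))$ takes the same tape values outside $[-\ell, \ell]^d$ regardless of $j$, giving the common context $x$. The $G_2$ case demands slightly more bookkeeping because states may change under conjugation, but the continuity-based matching argument goes through verbatim once the state component is tracked separately from the tape.
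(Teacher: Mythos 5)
Your overall plan --- pull back along $g^{-1}$, invoke physical universality of $M$, push forward, and recover a $j$-independent context by continuity and locality --- is exactly the adaptation of Proposition~\ref{prop:InvariantUnderMorphisms} that the paper has in mind (the paper itself only asserts that ``a similar proof works'' in the moving tape model and gives no details). However, there is one step where the moving tape model genuinely differs from the moving head model, and your proposal inherits it without justification. In the moving tape model, physical universality is defined only for windows of the form $[0,m-1]^d$, whose corner is pinned at the head's position (the origin), and the moving tape dynamics does \emph{not} commute with tape translations; so there is no analogue of the step ``applied through conjugation by translation'' used in the proof of Proposition~\ref{prop:InvariantUnderMorphisms}. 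Your pulled-back configurations $g^{-1}(P_j \sqcup b^{\Z^d\setminus[0,m-1]^d})$ have non-$a$ content smeared by the radius of $g^{-1}$ into \emph{negative} coordinates, i.e.\ behind the head; such data lies outside every corner cube $[0,m''-1]^d$, and since it varies with $j$ it cannot be absorbed into a common context either. Hence the step ``apply physical universality of $M$ in the moving tape model to the tuples $((q_j,P'_j))_j$ with $P'_j\in\Sigma^{[-m'/2+1,m'/2]^d}$'' is not licensed by the paper's definition. The same issue infects your $G_1$ and $G_2$ cases: $A^{-1}[0,m-1]^d$ for $A\in\GL(d,\Z)$, or the shifted cube produced by the state-dependent shift of an element of $\RTM(Q,\Sigma)$, in general meets negative coordinates and is contained in no corner cube, so ``padding to a larger cube'' does not repair it.

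What is missing is therefore a recentering lemma: that physical universality of $M$ in the moving tape model with corner-anchored windows implies the analogous transformability statement for windows such as $[-r,m-1+r]^d$ with the origin in the interior. This is not a formality; it is precisely the content hidden behind ``a similar proof works'', and it cannot be obtained by translation-equivariance as in the moving head case, since in the moving tape model the head is fixed at the origin and data strictly behind the head is invisible to any corner-cube instance of the definition. You should either prove such a lemma (possibly under extra hypotheses on $M$), or rework the reduction so that the $j$-dependent content that $g^{-1}$ creates on the negative side of the head is handled explicitly --- the continuity argument at the end of your proposal controls the common context, not this. The remaining ingredients (disjointness of pulled-back orbits under conjugation, the choice of the targets $R'_j$ by continuity of $g$, and the extraction of a single context from the locality of $g$ together with the $a$-padding) are correct and match the paper's moving head argument.
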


\subsubsection{The inverse rule}

The inverse machine $M^{-1}$ is physically universal on $1$-finite configurations by Corollary~\ref{cor:InversePU}. We do not know if it is PU on $0$-finite configurations, equivalently whether $M$ is PU on $1$-finite configurations. Indeed, while Lemma~\ref{lem:FastEscape} applies to the inverse, Lemma~\ref{lem:NoEscape} does not, and indeed on a $0$-finite configuration, $M^{-1}$ does not always eventually escape it by running to infinity along an arithmetic progression, and the spacetime diagram of a finite-support configuration need not be a semilinear set.

\begin{question}
Is $M$ physically universal on $1$-finite configurations? Equivalently, is $M^{-1}$ physically universal on $0$-finite configurations?
\end{question}

\subsection{Escape time}

We show in Lemma~\ref{lem:FastEscape} that the head of the machine escapes a square of side length $n$ in $O(n^4)$ time steps. We do not know if this is optimal.

\begin{question}
\label{q:EscapeTime}
For the machine $M$, do there exist, for infinitely many $n$, configurations with all $1$-symbols and the machine head contained in $[0,n-1]^2$ such that the head stays in $[0,n-1]^2$ for $\Omega(n^4)$ steps?
\end{question}

Obviously in such configurations, $[0,n-1]^2$ must contain a positive density of $1$s since the potential function must have a value of order $\Omega(n^4)$.

The potential function gives the correct polynomial rate of escape for configurations where there are $O(n)$ $1$-symbols in $[0,n-1]^2$. Namely, the potential of such a configuration is obviously $O(n^3)$, so the head will escape in time $O(n^3)$, and indeed we can find configurations where the escape time is $\Omega(n^3)$ and the support consists of $7$ partial arithmetic progressions.

\begin{example}
\label{ex:N3}
Let $a \geq 1, b \geq 0$. Consider the configuration $(\stE[(1,0)], x)$ where the coordinates of $1$-symbols in $x \in \{0,1\}^{\Z^2}$ form the set
\[ A = A_{nw} \cup A_{n} \cup A_{ne} \cup A_{se} \cup A_{se2} \cup A_{s} \cup A_{sw} \] where
\[ A_{nw} = \{ (-3,1) + i(-1,1) \;|\; i \in [0,a-1] \} \]
\[ A_{n} = \{ (0,-1) + i(0,1) \;|\; i \in [0,a] \} \]
\[ A_{ne} = \{ (2,0) + i(3,1) + b(2,0) \;|\; i \in [0,a-1] \} \]
\[ A_{se} = \{ (2,-2) + i(3,-1) + b(2,-2) \;|\; i \in [0,a-1] \} \]
\[ A_{se2} = \{ (2,-4) + i(1,-1) + b(1,-2) + a(0,-1) \;|\; i \in [0,a-1] \} \]
\[ A_{s} = \{ (0,-2) + i(0,-1) + b(0,-2) \;|\; i \in [0,a-1] \} \]
\[ A_{sw} = \{ (-3,-4) + i(-1,-1) + b(0,-2) + a(0,-1) \;|\; i \in [0,a-1] \} \]

We only give a ``proof-by-simulation'', and give an informal description of what happens.\footnote{It should be a straightforward (but long) exercise to translate this into an explicit inductive proof by simply writing the formulas for the (finitely many types of) intermediate situations. We feel that it is not worth the trouble to torture ourselves and the readers with this, as this construction is only a partial result towards Question~\ref{q:EscapeTime}.} To better follow the description below, we suggest running a computer simulation. Appendix~\ref{sec:Golly} contains a description of $\bar M$ in the @RULE format used by the Golly cellular automaton simulator, and Appendix~\ref{sec:Golly2} contains a Lua script for drawing the configuration $(\stE[(1,0)], x)$ in Golly.

Inspecting the trajectories of these configurations for various $a$ and $b$, we see that the head ``rebuilds a blown-up K-shape'', in the sense that the configuration contains four separate segments of $1$-symbols, $A_n$, $A_{ne}$, $A_{se}$ and $A_s$, which the head slowly transports together to form a shape that resembles the letter K.
This happens in $a$ stages, where on the $i$th stage for $i \in [1, a]$, it transports one $1$-symbol to each of the four tips of the K-shape.
This process is similar to the activation of catchers in Section~\ref{sec:BorderProcesses}: the head travels in a rectangular spiral, bringing four $1$-symbols one step closer to the spiral's center on each loop.
When the $1$-symbols reach the K-shape, the head travels south, hits $A_{se2}$, moves clockwise around the blown-up-K and begins the next stage.

Now we analyze the amount of time spent on this configuration inside the smallest rectangle $R$ containing $A$.
On the first stage, the machine visits all but five cells\footnote{It misses the northwest corner $(0,0)$ by the choice of the initial position of the head, and also misses $4$ cells in the center.} of the rectangle $[0, 2b+2] \times [-2b-2, 0]$. For $b$ large enough, this is at least $b^2$ cells. On each stage, the annulus visited grows strictly, and there are $a$ stages, so in total the machine takes at least $a b^2$ steps before exiting $R$. Setting $a = b = n$, the region $R$ is contained in a square of side length
$\max(1 + 4a + 2b, a + 2 + 2a + 2b) \leq 7n$ and the machine takes at least $n^3$ steps on it. This concludes the proof. \hfill \qee
\end{example}

\bibliographystyle{plain}
\bibliography{../../../bib/bib}{}

\clearpage

\appendix

\section{Golly compatible @RULE file for (a ``square root'' of) $\bar M$}
\label{sec:Golly}

\VerbatimInput{anc/PUAntUnannotated.rule}

\section{Lua script implementing Example~\ref{ex:N3} in Golly}
\label{sec:Golly2}

\VerbatimInput{anc/draw-k-shape.lua}

\end{document}